\declaretheoremstyle[headfont=\normalsize\normalfont\bfseries,notefont=\mdseries, notebraces={(}{)},bodyfont=\normalfont,postheadspace=0.5em]{basicstyle}
\declaretheoremstyle[headfont=\normalsize\normalfont\bfseries,notefont=\mdseries,
notebraces={(}{)},bodyfont=\normalfont\itshape,postheadspace=0.5em]{italstyle}
\declaretheorem[style=italstyle,name=Theorem,numberwithin=section]{theorem}
\declaretheorem[style=italstyle,name=Corollary,sibling=theorem]{cor}
\declaretheorem[style=italstyle,name=Conjecture,sibling=theorem]{conjecture}
\declaretheorem[style=italstyle,name=Proposition,sibling=theorem]{prop}
\declaretheorem[style=italstyle,name=Lemma,sibling=theorem]{lemma}
\renewenvironment{proof}{\preproof}{\endpreproof}
\newcommand{\bd}{\partial}
\newcommand{\C}{\mathbb{C}}
\renewcommand{\d}{\mathrm{d}}
\newcommand{\Ham}{\mathrm{Ham}}
\newcommand{\id}{\mathrm{id}}
\newcommand{\intprod}{\mathbin{{\tikz{\draw(-0.1,0)--(0.1,0)--(0.1,0.2)}\hspace{0.5mm}}}}
\newcommand{\R}{\mathbb{R}}
\renewcommand\section{\@startsection{section}{1}{0pt}{-3.5ex \@plus -1ex \@minus -.2ex}{2.3ex \@plus.2ex}{\centering\itshape}}
\newcommand{\set}[1]{\left\{#1\right\}}
\renewcommand{\subsection}{\@startsection{subsection}{2}%
  \z@{.5\linespacing\@plus.7\linespacing}{-.5em}%
  {\normalfont\itshape}}
\newcommand{\Z}{\mathbb{Z}}
\title[Extensible positive loops]{Extensible positive loops and vanishing of symplectic cohomology}
\author{Dylan Cant}
\author{Jakob Hedicke}
\author{Eric Kilgore}
\begin{document}

\begin{abstract}
  The symplectic cohomology of certain symplectic manifolds $W$ with non-compact ends modelled on the positive symplectization of a compact contact manifold $Y$ is shown to vanish whenever there is a positive loop of contactomorphisms of $Y$ which extends to a loop of Hamiltonian diffeomorphisms of $W$. An open string version of this result is also proved: the wrapped Floer cohomology of a Lagrangian $L$ with ideal Legendrian boundary $\Lambda$ is shown to vanish if there is a positive loop $\Lambda_{t}$ based at $\Lambda$ which extends to an exact loop of Lagrangians based at $L$. Various examples of such loops are considered. Applications include the construction of exotic compactly supported symplectomorphisms and exotic fillings of $\Lambda$.
\end{abstract}

\maketitle

\section{Introduction}
\label{sec:introduction}

\subsection{Statement of results}
\label{sec:statement-results-1}

Let $(W,\omega)$ be a convex-at-infinity symplectic manifold, and let $(Y,\xi)$ be its ideal contact boundary. Recall that this means the non-compact end of $W$ is symplectomorphic to the positive half of the symplectization of $Y$. In particular, $W$ has a Liouville form $\lambda$ outside of a compact set. The associated Liouville vector field $Z$ (extended arbitrarily to the compact part of $W$) is complete. As is well-known, any symplectomorphism which is equivariant with respect to $Z$, outside of a compact set, has an \emph{ideal restriction} to a contactomorphism of the ideal boundary $Y$.

A symplectic isotopy $\psi_{t}$ is called a \emph{contact-at-infinity Hamiltonian system} provided $\varphi_{0}=1$, its infinitesimal generator $X_{t}$ is $1$-periodic and $\omega$-dual to an exact 1-form, and $\varphi_{t}$ commutes with the Liouville flow outside of a compact set. Each such system has an ideal restriction to a contact isotopy of $Y$.

A loop of contactomorphisms $\varphi_{t}$ of $Y$ based at the identity is said to be \emph{extensible} if it is the ideal restriction of a contact-at-infinity Hamiltonian loop. As explained in \S\ref{sec:fibr-sequ-sympl}, contractible loops are always extensible.

A loop of contactomorphisms is called \emph{positive} provided the curves $\varphi_{t}(y)$ are positively transverse to the contact distribution in $Y$ (with respect to the coorientation induced by the Liouville form on the end of $W$).

Our first main theorem asserts that the existence of an extensible positive loop ensures the vanishing of symplectic cohomology.

\subsubsection{Symplectic cohomology and extensible positive loops}
\label{sec:sympl-cohom-spec}

In order to define Floer theoretic invariants, we assume that $W$ is \emph{symplectically atoroidal}, in that the integral of $\omega$ over any smooth $2$-torus is zero. The prototypical example one should have in mind is that of a Liouville manifold, when the Liouville form extends smoothly to all of $W$.

The atoroidal assumption allows us to define the symplectic cohomology $\mathrm{SH}(W)$ invariant; our convention is to define this as the colimit of Hamiltonian Floer cohomology groups for contact-at-infinity systems whose ideal restrictions become more and more positive; see \S\ref{sec:floer-cohom-groups}. This invariant is well-known, see, e.g.,  \cite{floer_hofer_sh_i,cieliebak_floer_hofer_sh_ii,viterbo_functors_and_computations_1,seidel-biased,ritter_TQFT,ritter_negative_line_bundles,ritter_circle_actions,uljarevic_floer_homology_domains,merry_ulja,uljarevic_ssh,ulja_zhang,shelukhin_viterbo,shelukhin_zoll,PA_spectral_diameter}. In this paper, Floer cohomology is defined over the field $\Z/2$, and considers all orbits (not just the contractible orbits).

\begin{theorem}\label{theorem:main-absolute}
  If the ideal boundary of a convex-at-infinity and symplectically atoroidal manifold $W$ admits an extensible positive loop of contactomorphisms then the symplectic cohomology of $W$ vanishes.
\end{theorem}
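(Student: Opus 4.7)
The plan is to exploit the extensible positive loop to construct Seidel-type isomorphisms between Hamiltonian Floer cohomology groups at ever-increasing slopes, and then compare these with the continuation maps in the colimit defining $\mathrm{SH}(W)$ so as to force every class in the colimit to vanish. First, I would lift the positive loop $\varphi_{t}$ on $Y$ to a contact-at-infinity Hamiltonian loop $\psi_{t}$ on $W$, guaranteed by the extensibility hypothesis; its generator $G_{t}$ is, outside a compact set, strictly positive in the Liouville sense with some slope $\delta>0$. The $n$-fold iterate $\psi_{t}^{(n)}$ is again a Hamiltonian loop whose generator has slope $n\delta$ at infinity, producing a sequence of loops whose slopes tend to $+\infty$.

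Next, any Hamiltonian loop on $W$ acts on Hamiltonian Floer cohomology by a Seidel-type quasi-isomorphism $\Phi_{n}\colon \mathrm{HF}^{*}(H)\to \mathrm{HF}^{*}(H\# G^{(n)})$, where $H\# G^{(n)}$ denotes the Hamiltonian generating $\varphi^{H}_{t}\circ\psi^{(n)}_{t}$; symplectic atoroidality is precisely what makes this map well-defined independently of cappings. Since the slope of $H\# G^{(n)}$ at infinity grows like $n\delta$, the family $\{H\# G^{(n)}\}_{n\geq 1}$ is cofinal in the directed system defining $\mathrm{SH}(W)$, so the colimit can be computed using only this family.

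The heart of the argument is to compare $\Phi_{n}$ with the monotone-homotopy continuation map $c_{n}\colon \mathrm{HF}^{*}(H)\to \mathrm{HF}^{*}(H\# G^{(n)})$. Factoring $c_{n}=\Phi_{n}\circ(\Phi_{n}^{-1}c_{n})$ reduces the problem to showing that $\Phi_{n}^{-1}c_{n}$ vanishes as an endomorphism of $\mathrm{HF}^{*}(H)$ for $n$ sufficiently large. I would attack this via action estimates: the Seidel map $\Phi_{n}$ shifts the action of an orbit by an amount controlled by the action of $\psi^{(n)}$, and positivity of the loop forces this shift to grow at least linearly in $n$; the continuation $c_{n}$, built from a monotone homotopy, changes action by at most a bounded amount independent of $n$. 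Hence $\Phi_{n}^{-1}c_{n}$ moves every generator far outside any fixed window of the action spectrum of $H$, and since this spectrum is bounded on the relevant side for an admissible $H$, the composite must vanish once $n$ is large enough.

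From $\Phi_{n}^{-1}c_{n}=0$ I would conclude $c_{n}=0$, so every class of $\mathrm{HF}^{*}(H)$ dies in $\mathrm{SH}(W)$; running the argument over a cofinal family of admissible $H$ then gives $\mathrm{SH}(W)=0$. The principal difficulty is the action-shift estimate: one must set up the Seidel morphism and the continuation cochain maps so that the action of every orbit and every Floer trajectory entering the count is controlled linearly in $n$, and, crucially, the geometric positivity of $\varphi_{t}$ at infinity must be converted into a strict (rather than merely averaged) sign for the action drift. A workable alternative, should the action approach prove delicate, is to construct by interpolating between the moduli spaces defining $\Phi_{n}$ and $c_{n}$ an explicit chain-homotopy exhibiting $c_{n}$ as a composition that factors through Floer complexes at even higher slope, so that its image in the colimit is trivial regardless of direct action comparisons.
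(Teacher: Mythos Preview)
Your framework---Seidel/naturality isomorphisms plus cofinality of the iterated loop---matches the paper's. The gap is in the mechanism you propose for killing classes: the action estimate does not hold as stated.

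The claim that ``positivity of the loop forces this shift to grow at least linearly in $n$'' conflates positivity \emph{at the ideal boundary} with positivity of the generating Hamiltonian everywhere. The loop $\psi^{(n)}_{t}$ is only required to be contact-at-infinity with positive ideal restriction; its generator $G^{(n)}_{t}$ can be arbitrarily negative on the compact part of $W$, and the fixed points whose action you want to control typically sit in that compact part. So the per-orbit action shift under $\Phi_{n}$ need not grow with $n$. Likewise, the assertion that the continuation $c_{n}$ ``changes action by at most a bounded amount independent of $n$'' is false: the monotone homotopy from $H$ to $H\# G^{(n)}$ grows with $n$, and the energy/action identity (cf.\ \S\ref{sec:energy-estim-ator}) gives a bound that scales with the Hamiltonians involved, not a uniform one. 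With both sides of your comparison unbounded and uncontrolled in the interior, the conclusion $\Phi_{n}^{-1}c_{n}=0$ does not follow. Your fallback---factoring $c_{n}$ through ``even higher slope''---also does not help: factoring through higher slope is exactly what continuation maps do, and cannot by itself make the image vanish in the colimit.

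The paper's proof avoids action filtrations entirely. It uses the naturality isomorphisms only to conclude that all the continuation maps $\mathfrak{c}_{k}:\mathrm{HF}(\psi^{k}_{t})\to\mathrm{HF}(\psi^{k+1}_{t})$ are conjugate, hence have kernels of the same dimension. After replacing $\varphi_{t}$ by a large iterate so that $\ker\mathfrak{c}_{0}$ is already maximal, elementary linear algebra gives $\mathrm{HF}(\psi^{k}_{t})=\ker\mathfrak{c}_{k}\oplus\mathrm{im}(\mathfrak{c}_{k-1}\cdots\mathfrak{c}_{0})$ for all $k$, so $\mathrm{HF}(\psi^{0}_{t})\to\mathrm{SH}(W)$ is \emph{surjective} for every starting system $\psi^{0}_{t}$. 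The contradiction then comes from a separate PSS argument (Lemma~\ref{lemma:non-surjective-lemma}): the image of $\mathrm{HF}(R^{\alpha}_{-\epsilon t})\to\mathrm{HF}(R^{\alpha}_{\epsilon t})$ consists of nilpotent elements, so cannot hit the unit, hence $\mathrm{HF}(R^{\alpha}_{-\epsilon t})\to\mathrm{SH}(W)$ is \emph{not} surjective unless $\mathrm{SH}(W)=0$. The key missing idea in your proposal is this second ingredient---some independent obstruction (here, the ring structure and the unit) that is incompatible with the surjectivity forced by the twisting trick.
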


The method used to prove Theorem \ref{theorem:main-absolute} involves naturality isomorphisms relating the Floer cohomology of a system $\psi_{t}$ with the Floer cohomology of $\varphi_{t}\circ \psi_{t}$ where $\varphi_{t}$ is a loop of Hamiltonian diffeomorphisms. Our argument is directly inspired by work of \cite{merry_ulja,uljarevic_floer_homology_domains}, which prove the symplectic cohomology satisfies certain dimension bounds in the presence of extensible positive loops, and the work of \cite{ritter_negative_line_bundles,ritter_circle_actions} which investigates the symplectic cohomology of certain manifolds whose ideal boundaries admit periodic Reeb flows. If one restricts to contractible positive loops of contactomorphisms, then our vanishing result appears already in \cite{chantraine_colin_d_rizell} for Liouville manifolds via a different method.

A similar vanishing result for Rabinowitz Floer homology (RFH) in the presence of contractible positive loops appears in \cite{albers_merry_orderability_non_squeezing_RFH}, using the method of spectral invariants; see also \cite{djordjevic_uljarevic_zhang,cant_sh_barcode} which prove the contractible case using spectral invariants built from symplectic cohomology groups.\footnote{One should note that \cite{albers_merry_orderability_non_squeezing_RFH,djordjevic_uljarevic_zhang,cant_sh_barcode} suppose $W$ is a Liouville manifold} See \cite[Theorem 6.4]{kwon-van-koert} for a related (weaker) statement to our Theorem \ref{theorem:main-absolute}. In certain cases, our result follows from \cite{ritter_negative_line_bundles,ritter_circle_actions} which considers the effect of Hamiltonian circle actions (with positive ideal restriction) on the symplectic and quantum cohomologies of certain convex-at-infinity manifolds; see also \cite{venkatesh_quantitative_nature}. It is worth mentioning that the ideal restrictions considered in \cite{ritter_negative_line_bundles,ritter_circle_actions,venkatesh_quantitative_nature} are required to be strict contactomorphisms for some contact form.

See also \cite{ritter-zivanovic-1,ritter-zivanovic-2} for analysis of the symplectic cohomology of a large class of open symplectic manifolds $W$ admitting special kinds of Hamiltonian circle actions, including examples of $W$ which are \emph{not} convex-at-infinity.

\subsubsection{The open string analogue}
\label{sec:open-string-analogue}

Given a Lagrangian $L\subset W$ which is tangent to the Liouville vector field at infinity, one can associate its ideal Legendrian boundary $\Lambda\subset Y$. In this case, we say that $L$ is \emph{contact-at-infinity}, and call $\Lambda$ its \emph{ideal restriction}.

A path of contact-at-infinity Lagrangians $L_{t}$ is called \emph{exact} provided it is induced by a contact-at-infinity Hamiltonian path $L_{t}=\varphi_{t}(L)$. Every exact path has an ideal restriction to a Legendrian isotopy. A loop of Legendrians based at $\Lambda$ is called \emph{extensible} provided it is the ideal restriction of an exact loop based at $L$; see \S\ref{sec:lagrangian-ideal-restriction} for the precise definition.

The open-string invariant corresponding to the symplectic cohomology is the \emph{wrapped Floer cohomology} $\mathrm{HW}(L)$; see, e.g., \cite{abouzaid_seidel_open_string_analogue,ritter_TQFT,ganatra_pardon_shende}. In this case, we require that $L$ is \emph{symplectically acylindrical}, i.e., every map of a cylinder $[0,1]\times \R/\Z\to (W,L)$ has zero symplectic area.

Our convention is that $\mathrm{HW}(L)$ is a colimit of Floer cohomology groups $\mathrm{HF}(L;\psi_{t})$ where $\psi_{t}$ is a contact-at-infinity Hamiltonian system whose ideal restriction becomes more and more positive. We show:
\begin{theorem}\label{theorem:main-relative}
  If there is an extensible positive loop of Legendrians based at the ideal boundary of an acylindrical Lagrangian $L$, then the wrapped Floer cohomology of $L$ vanishes.
\end{theorem}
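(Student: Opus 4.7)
The plan is to mirror the proof of Theorem \ref{theorem:main-absolute}, replacing Hamiltonian Floer cohomology by Lagrangian Floer cohomology and the Hamiltonian loop by the exact loop of Lagrangians $\varphi_t(L)$ given in the hypothesis. Let $\varphi_t$ denote the contact-at-infinity Hamiltonian system realizing the extension, so that $\varphi_0=\id$, $\varphi_1(L)=L$, and the ideal restriction $\bar\varphi_t$ is a positive loop of Legendrians based at $\Lambda$. The acylindricity of $L$ plays the role the atoroidal hypothesis plays in the closed-string case: it ensures that the action functional on the path space $\{\gamma:[0,1]\to W : \gamma(0),\gamma(1)\in L\}$ is well-defined up to a locally constant shift, so that the filtered Floer cohomology $\mathrm{HF}(L;\psi_t)$ is unambiguous.

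The key step is to establish an open-string analogue of the Seidel naturality isomorphism: for each contact-at-infinity system $\psi_t$, a canonical isomorphism
\[
  S_\varphi : \mathrm{HF}(L;\psi_t) \xrightarrow{\cong} \mathrm{HF}(L;\varphi_t\circ\psi_t),
\]
where $\varphi_t\circ\psi_t$ denotes the composite system. On chain-level generators $S_\varphi$ is realized by $\varphi_1$ transporting chords of $\psi_t$ with endpoints on $L$ to chords of $\varphi_t\circ\psi_t$ with endpoints on $\varphi_1(L)=L$; at the cohomology level, it is implemented by counting solutions of a $\varphi$-twisted Floer equation on a half-strip, with moving Lagrangian boundary condition $\varphi_s(L)$ which closes up to $L$ at $s=\infty$ by virtue of $\varphi_1(L)=L$. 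The positivity of $\bar\varphi_t$ ensures the $C^0$-compactness of these strips via an integrated maximum principle, and simultaneously guarantees that $\varphi_t\circ\psi_t$ has strictly larger ideal slope than $\psi_t$, placing the target group one step further along in the colimit defining $\mathrm{HW}(L)$.

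With $S_\varphi$ in hand the vanishing follows by the same mechanism as in the absolute case. One identifies $S_\varphi$, via a homotopy-of-homotopies argument, with the ordinary continuation map $c:\mathrm{HF}(L;\psi_t)\to\mathrm{HF}(L;\varphi_t\circ\psi_t)$ that appears in the colimit system, modulo multiplication by a Seidel-type element encoding the loop. Iterating the loop to $\varphi_t^{(n)}$ produces an arbitrarily positive extensible loop with associated naturality isomorphism $S_{\varphi^{(n)}}$, and the corresponding continuation maps form a cofinal sub-family in the colimit computing $\mathrm{HW}(L)$. Tracking the action shifts induced by successive iterations shows that the induced endomorphism of $\mathrm{HW}(L)$ is simultaneously an isomorphism and a map that pushes every representative of a class into an action window incompatible with it, and this can only be reconciled by $\mathrm{HW}(L)=0$.

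The main obstacle is the construction of $S_\varphi$ with the analytic control required to define and manipulate it in the non-compact manifold $W$, together with its compatibility with continuation maps. One must adapt the integrated maximum principle to the moving Lagrangian boundary $\varphi_s(L)$, using the Liouville equivariance of $\varphi_t$ at infinity together with the positive transversality of the ideal loop to confine Floer strips to a compact region; once this compactness is secured the algebraic part of the argument proceeds in direct parallel with the proof of Theorem \ref{theorem:main-absolute}.
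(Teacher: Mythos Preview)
Your broad architecture --- naturality isomorphisms plus a cofinal sequence obtained by iterating the positive loop --- matches the paper, but the mechanism you describe for forcing vanishing does not, and as written it has a gap.

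First, a minor point: your $S_\varphi$ is more elaborate than needed. Since $\varphi_1(L)=L$, the map $x\mapsto\varphi_1(x)$ is a literal bijection between $L\cap\psi_1(L)$ and $L\cap\varphi_1\psi_1(L)$, and $w\mapsto\varphi_1(w)$ identifies the Floer moduli spaces on the nose for suitably related almost complex structures. No moving-boundary half-strips or curve counts enter; the naturality map is a tautological identification of chain complexes, exactly as in the closed-string case.

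The substantive divergence is in the endgame. You propose to identify $S_\varphi$ with a continuation map ``modulo multiplication by a Seidel-type element'' and then derive vanishing from an action-shift incompatibility. The paper does neither of these things, and your action argument is not fleshed out enough to stand on its own (what is the action filtration on $\mathrm{HW}(L)$, and why does naturality shift it by a definite positive amount?). The paper's actual mechanism is pure finite-dimensional linear algebra together with a ring-theoretic obstruction: naturality conjugates successive continuation maps $\mathfrak{c}_k$, so all $\ker\mathfrak{c}_k$ have the same dimension; after replacing $\varphi_t$ by a high iterate one arranges $\ker\mathfrak{c}_0=\ker(\mathfrak{c}_1\mathfrak{c}_0)$, and a short induction then shows $\mathrm{HF}(L;\psi_t)\to\mathrm{HW}(L)$ is \emph{surjective} for every starting system $\psi_t$. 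The contradiction comes from Lemma~\ref{lemma:non-surjective-lemma-lag}: taking $\psi_t=R^\alpha_{-\epsilon t}$, the PSS factorization through Morse theory shows the image in $\mathrm{HF}(L;R^\alpha_{+\epsilon t})\cong H^*(L)$ consists of nilpotent elements, hence misses the unit of $\mathrm{HW}_0(L)$ whenever $\mathrm{HW}(L)\neq 0$. Your proposal omits both the kernel/image stabilization argument and the role of the unit and the triangle product; the action-shift route you gesture at is closer to the spectral-invariant approaches the paper cites but deliberately does not follow.
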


The paper of \cite{chantraine_colin_d_rizell} proves this in the case of contractible positive loops, when $L$ is exact and $W$ is a Liouville manifold. As in the closed-string case, our proof is based on the naturality transformation trick introduced in \cite{uljarevic_floer_homology_domains,merry_ulja,uljarevic_ssh} and \cite{ritter_negative_line_bundles,ritter_circle_actions}.

\subsection{Discussion of results}
\label{sec:discussion-results}

\subsubsection{Existence of Reeb chords}
\label{sec:exist-reeb-chords}

It is well-known, using ideas of \cite{viterbo_functors_and_computations_1}, that the vanishing of symplectic cohomology implies the ideal boundary $Y$ has a closed Reeb orbit for any choice of contact form. Thus Theorem \ref{theorem:main-absolute} implies $Y$ has a closed Reeb orbit whenever $Y$ admits an extensible positive loop for some atoroidal filling $W$. This is of course less general than \cite{albers_fuchs_merry_orderability_weinstein} which proves that a closed contact manifold admitting \emph{any} positive loop of contactomorphisms has a closed Reeb orbit.

However, the open string analogue of this argument also holds; see \cite{abouzaid_seidel_open_string_analogue,ritter_TQFT}. Therefore Theorem \ref{theorem:main-relative} implies:

\begin{theorem}\label{theorem:existence-reeb-chords}
  A Legendrian $\Lambda$ appearing as the ideal boundary of an acylindrical and contact-at-infinity Lagrangian $L$ admits a Reeb chord whenever there is a positive loop based at $\Lambda$ which is the ideal restriction of an exact loop of Lagrangians based at $L$.
\end{theorem}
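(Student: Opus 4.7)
The plan is to derive Theorem~\ref{theorem:existence-reeb-chords} from Theorem~\ref{theorem:main-relative} via the open-string analogue of Viterbo's existence argument \cite{viterbo_functors_and_computations_1}, as developed in \cite{abouzaid_seidel_open_string_analogue,ritter_TQFT}. Theorem~\ref{theorem:main-relative} already gives $\mathrm{HW}(L)=0$ under the stated hypothesis, so it suffices to prove the contrapositive: if $\Lambda$ admits no Reeb chord for some contact form $\alpha$ defining the contact structure at infinity, then $\mathrm{HW}(L)$ is nonzero.

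Assume that such an $\alpha$ exists. I would choose a cofinal family of contact-at-infinity Hamiltonian systems $\psi_{t}^{(n)}$ whose ideal restrictions are positive with slopes $a_{n}\to\infty$, and which, after a small compactly supported perturbation, are generated by a $C^{2}$-small Morse function on a large compact region of $W$, chosen so that it restricts to a Morse function on $L$. The $1$-chords of $\psi_{t}^{(n)}$ with endpoints on $L$ split into chords located in the non-compact end, which project (after the standard reparametrisation by the contact Hamiltonian) to $\alpha$-Reeb chords of $\Lambda$, and chords located in the compact part, which correspond to critical points of the Morse function restricted to $L$. Under the assumption that $\Lambda$ carries no $\alpha$-Reeb chord, the first class is empty, so each Floer cochain complex is identified with the Morse cochain complex of $L$. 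A maximum-principle argument rules out Floer and continuation trajectories escaping into the non-compact end, so the continuation maps in the directed system defining $\mathrm{HW}(L)$ are all isomorphisms. Passing to the colimit, $\mathrm{HW}(L)\cong H^{*}(L;\Z/2)$, which is nonzero because $L$ is nonempty, contradicting Theorem~\ref{theorem:main-relative}.

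The main obstacle will be the analytical input confirming that the Floer complex really does reduce to the Morse complex of $L$ in the absence of $\alpha$-Reeb chords on $\Lambda$, i.e., that no Floer or continuation trajectory escapes into the non-compact end and contributes a spurious differential or continuation correction. This is handled by the integrated maximum principle for contact-at-infinity Hamiltonian systems, a standard technique in this framework; I would expect it to be part of the machinery used to construct $\mathrm{HW}(L)$ in \S\ref{sec:floer-cohom-groups}, so that invoking it here should be unproblematic.
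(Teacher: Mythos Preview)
Your proposal is correct and follows essentially the same route as the paper: the paper simply notes that Theorem~\ref{theorem:main-relative} gives $\mathrm{HW}(L)=0$, and then cites \cite{abouzaid_seidel_open_string_analogue,ritter_TQFT} for the open-string Viterbo argument that the absence of Reeb chords forces $\mathrm{HW}(L)\cong H^{*}(L)\ne 0$. You have spelled out this latter step in more detail than the paper does, but the strategy is identical.
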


This is an immediate consequence of Theorem \ref{theorem:main-relative}. Comparing with \cite{albers_fuchs_merry_orderability_weinstein}, one naturally wonders whether every Legendrian which is the basepoint of a positive loop (not assumed to be extensible) has a Reeb chord for every choice of contact form.

\subsubsection{1-stabilizations and extensible positive loops}
\label{sec:1-stab-spec}

Restrict to the case when $W$ is a Liouville manifold, and fix a Liouville form $\lambda$. Define the $1$\emph{-stabilization} of $W$ to be $W'=W\times \mathbb{C}$, equipped with the Liouville form $\lambda'=\lambda+\frac{1}{2}(x\d y-y\d x)$. We show that:
\begin{prop}\label{prop:extensible-positive}
  The $1$-stabilization of a Liouville manifold admits an extensible positive loop.
\end{prop}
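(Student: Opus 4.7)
The plan is to build an extensible positive loop by perturbing the Hamiltonian circle action rotating the $\C$-factor of $W' = W \times \C$. First I would take the generator $H_{0}(w,z) = \pi |z|^{2}$, check that it is Liouville-homogeneous of degree one with respect to $Z' = Z + \tfrac{1}{2}(x\partial_x + y\partial_y)$, and conclude that $\varphi_{t}(w,z) = (w, e^{-2\pi i t}z)$ is a Hamiltonian loop of period one which commutes with the Liouville flow everywhere, not just at infinity. It therefore restricts to a loop $\phi_{t}$ of contactomorphisms of $Y' = \partial_\infty W'$. A direct computation using $\lambda' = \lambda + \tfrac{1}{2}(x\,\d y - y\,\d x)$ gives contact Hamiltonian $h_{0} = \lambda'(X_{H_{0}})|_{Y'} = \pi|z|^{2}|_{Y'}$, which is strictly positive away from the codimension-two contact submanifold $Y \cong \{z=0\}\cap Y' \subset Y'$, along which it vanishes. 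So $\phi_{t}$ is only weakly positive, with degeneracy exactly along $Y$.

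The central step is to deform $\varphi_{t}$ through Hamiltonian loops to a strictly positive loop. I would look for a time-dependent perturbation $H_{t} = H_{0} + K_{t}$ where $K_{t}$ is itself Liouville-degree-one at infinity, designed so that the flow of $H_{t}$ remains a loop of period one but the induced contact Hamiltonian on $Y'$ is strictly positive everywhere. A natural ansatz is $K_{t}(w,z) = r_{W}(w)\,\eta_{t}(\arg z, y)$ with $r_{W}$ a smoothly-extended Liouville-radial coordinate on $W$; the Hamiltonian vector field of such a $K_{t}$ couples the $\C$-rotation to a Reeb-type motion on $Y$ (via $X_{r_{W}} = -R_{\alpha}$), and Fourier analysis of $\eta_{t}$ along the orbits of the rotation reduces the loop condition on $H_{t}$ to algebraic conditions on the diagonal Fourier coefficients of $\eta_{t}$.

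The main obstacle is to reconcile the loop condition with strict positivity of the contact Hamiltonian on the degeneracy locus $Y \subset Y'$: naively, the loop condition forces the angular average of $\eta_{t}$ to vanish, while positivity along $\{z=0\}$ specializes to a multiple of this same average and requires it to be strictly positive. I would resolve this by allowing $\eta_{t}$ to depend nontrivially on the $Y$-coordinate, so that the loop condition in the $W$-factor is closed up by a balance of Reeb-directed motion against motion within the contact distribution of $Y$ (generated by $\d_{\xi}\eta_{t}$), rather than by vanishing of the Reeb-directed averages. The added flexibility is enough to make the contact Hamiltonian of the resulting composite flow strictly positive at every point of $Y'$. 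Extensibility of the resulting loop on $Y'$ is then immediate from the construction, since $H_{t}$ is a globally defined Liouville-degree-one Hamiltonian on $W'$ whose flow is a Hamiltonian loop of period one.
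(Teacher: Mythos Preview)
Your first paragraph is correct and matches the paper's first step (\S2.5): the rotation of the $\C$-factor gives a contact-at-infinity Hamiltonian loop whose ideal restriction is non-negative, with degeneracy exactly along the copy of $Y$ sitting inside $Y'$ at $z=0$.

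The divergence from the paper is in how to upgrade non-negative to positive. The paper does \emph{not} perturb the Hamiltonian directly. Instead it invokes the ergodic trick of Eliashberg--Polterovich (\S2.6): any non-constant non-negative loop $\phi_{t}$ of contactomorphisms can be upgraded to a strictly positive loop $\psi_{t}$ by forming a suitable product of conjugates of time-shifts of $\phi_{t}$. The resulting $\psi_{t}$ is freely homotopic to an iterate $\phi_{t}^{k}$; since extensibility depends only on the free homotopy class in $\mathrm{Cont}(Y')$ (by the Serre fibration property of the ideal restriction), and $\phi_{t}^{k}$ is extensible because $\phi_{t}$ is, the positive loop $\psi_{t}$ is extensible as well. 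This argument is short, robust, and requires no explicit construction on $Y'$.

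Your perturbative route, by contrast, has a real gap at exactly the point you flag as the ``main obstacle.'' You correctly observe that a naive averaging forces the relevant mean of $\eta_{t}$ to vanish, while positivity along $\{z=0\}$ demands it be strictly positive. Your proposed resolution --- allowing $\eta_{t}$ to depend on the $Y$-coordinate so that Reeb-directed motion is balanced against motion in the contact distribution --- is only asserted, not carried out. No computation is offered showing that such a balance can actually close the loop in the $W$-factor while keeping $h_{t}>0$ at every point of $Y\subset Y'$, and the reduction to ``algebraic conditions on diagonal Fourier coefficients'' is gestured at but never written down or solved. There is also a regularity issue: your ansatz $K_{t}(w,z)=r_{W}(w)\,\eta_{t}(\arg z,y)$ involves $\arg z$, which is undefined precisely on the locus $\{z=0\}$ where the positivity must be checked. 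Without a concrete construction (or at least a genuine existence argument) this step remains unproved, and it is not clear that a small perturbation of the rotation loop \emph{within its homotopy class} can be made strictly positive --- the EP construction typically lands in a nontrivial iterate class.
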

This implies that the $n$-stabilization $W\times \mathbb{C}^n$ admits an extensible positive loop for any $n\geq 1$. In the case $n\ge 2$, the existence of contractible positive loops is shown in \cite[Section 3]{ekp}. The loops we construct are generally not contractible, even in the case $n\ge 2$.

It is perhaps interesting to note that general convex-at-infinity manifolds cannot be stabilized (i.e., if $W$ is convex-at-infinity, then $W\times \C$ may no longer be convex-at-infinity). For instance, if $W$ contains a compact symplectic manifold, then $W\times \C$ can never be exact outside of a compact set.

In \S\ref{sec:stab-liouv-manif} we show that every stabilization $W'$ admits an extensible \emph{non-negative} loop (simply by rotating the $\C$ factor). Then the ergodic trick of \cite{ep2000} upgrades this to imply $W'$ admits an extensible positive loop; see \S\ref{sec:ergod-trick-eliashb}. This completes the proof of Proposition \ref{prop:extensible-positive}.

In particular, we recover the result of \cite[Proposition 4.5]{viterbo_functors_and_computations_1} and \cite[Proposition 2]{oancea-kunneth-formula-sh} that 1-stabilizations have vanishing symplectic cohomology.

If $L\subset W$ is a Lagrangian in a Liouville manifold $W$, say that $L\times \R\subset W\times \C$ is the \emph{stabilization} of $L$. The above discussion implies any stabilized Lagrangian is the basepoint of an extensible positive loop.

It is also noteworthy that the ergodic arguments of \cite{ep2000} can be modified to show that the existence of a non-constant \emph{non-negative} extensible loop of Legendrians implies the existence of an extensible positive loop; see \cite[\S4.2]{chernov_nemirovski_universal} for further discussion.

\subsubsection{A partial order on a cover of the contactomorphism group}
\label{sec:extensible-cover}

In \cite{ep2000} Eliashberg and Polterovich introduced a relation on the universal cover of the group of contactomorphisms.
This relation is a partial order if and only if there are no contractible positive loops of contactomorphisms.

It is a natural question to ask if the non-existence of extensible positive loops gives rise to a partial order on some smaller cover of $\mathrm{Cont}(Y)$.
In fact such a cover can be constructed as follows.

Let $G:=\pi_1(\mathrm{Cont}(Y))$ and define: $$H:=\{[\psi_t]\in G: \psi_t \text{ is extensible relative $W$}\}.$$
One can easily check that $H$ is a normal subgroup of $G$;
hence there exists a normal covering $\pi\colon\Gamma(W)\rightarrow \mathrm{Cont}(Y)$
with fundamental group being $H$ and group of deck transformations being $G/H$.

By construction $\Gamma(W)$ can be identified with the set of equivalence classes of paths in $\mathrm{Cont}(Y)$ starting at $\id$, where $\phi_t\sim\phi_t'$ if and only if there exists an extensible loop $\psi_t$ with $\phi_t'=\phi_t\circ\psi_t$.
The cover $\Gamma(W)$ has a well-defined group structure by setting $[\phi_t][\phi_t']:=[\phi_t\phi_t']$, with the identity element being the class of extensible loops.
Hence a bi-invariant relation can be defined on $\Gamma(W)$ by setting $\id\leq [\phi_t]$ if and only if $[\phi_t]$ is represented by a non-negative path of contactomorphisms.

As in \cite{ep2000}, the relation on $\Gamma(W)$ is a partial order if and only if there exist no extensible positive loops of contactomorphisms; the proof uses Lemma \ref{lem:ergodic}. Theorem \ref{theorem:main-absolute} implies that this is the case whenever $\mathrm{SH}(W)\neq 0$.

\subsubsection{Exotic symplectomorphisms associated to inextensible loops}
\label{sec:exot-sympl}

The authors learned the relationship between inextensible loops and exotic symplectomorphisms from the work of Uljarevi\'c, in particular \cite{uljarevic_floer_homology_domains,merry_ulja,ulja_drobnjak}.

Let $\mathrm{Ham}(W)$ be the space of time-1 maps of contact-at-infinity Hamiltonian systems; as usual, this forms a group. The topologies considered on $\mathrm{Ham}(W)$ and $\mathrm{Cont}(Y)$ are described in \S\ref{sec:topol-group-cont}.

In \S\ref{sec:fibr-sequ-sympl} we recall the arguments proving $\mathrm{Ham}(W)\to \mathrm{Cont}(Y)$ is a Serre fibration. The fiber over the identity is the group $\mathrm{Ham}_{c}(W):=\mathrm{Ham}(W)\cap \mathrm{Diff}_{c}(W)$. The fibration property induces a connecting morphism:
\begin{equation}\label{eq:connecting-homo}
  \pi_{1}(\mathrm{Cont}(Y),1)\to \pi_{0}(\mathrm{Ham}_{c}(W)).
\end{equation}
It is easy to see that an inextensible loop (based at the identity) hits a non-trivial element in $\pi_{0}(\mathrm{Ham}_{c}(W))$.
We call such a mapping class ``exotic'' in that it lives in $\mathrm{Ham}_{c}(W)=\mathrm{Ham}(W)\cap \mathrm{Diff}_{c}(W)$, but cannot be generated by a compactly supported Hamiltonian system.

An interesting question first resolved by \cite{seidel_thesis} is whether there are exotic symplectomorphisms which cannot be isotoped to the identity through symplectomorphisms (and which are smoothly isotopic to the identity); this problem is known as the \emph{symplectic isotopy problem}. His construction involves a \emph{generalized Dehn twist} which is an exotic element of $\mathrm{Symp}_{c}(T^{*}S^{2})$, and he proved (the square) of his Dehn twist is smoothly isotopic but not symplectically isotopic to the identity element. Since Seidel's element is compactly supported, it can be implanted inside any symplectic $4$-manifold which contains a Lagrangian $2$-sphere.

In our framework, the square of Seidel's element lives in $\pi_{0}(\mathrm{Ham}_{c}(T^{*}S^{2}))$ and is the element corresponding to the $1$-periodic Reeb flow under the connecting homomorphism \eqref{eq:connecting-homo}. Interestingly enough, \cite{seidel_thesis} explains that the notion of a symplectic Dehn twist goes back to Arnol'd \cite{arnold-symplectic-monodromy}; see \cite{seidel_thesis,frauenfelder-schlenk-dehn-seidel-2005,uljarevic_floer_homology_domains}.

Particular interest has been paid to inextensible loops arising from autonomous loops (note that autonomous positive loops are precisely the $1$-periodic Reeb flows); the associated element of $\pi_{0}(\mathrm{Ham}_{c}(W))$ is known in the literature as a \emph{fibered Dehn twist}. This construction can be found in \cite[\S4]{seidel-graded-lagrangians} (he does not use the name fibered Dehn twists, and attributes the question to Eliashberg and Polterovich). The name ``fibered Dehn twist'' and the approach in terms of the Serre fibration property for the ideal restriction is due to unpublished work of Biran and Giroux. In \cite{seidel-graded-lagrangians,uljarevic_floer_homology_domains,chiang-ding-van-koert,merry_ulja,ulja_drobnjak} it is shown (in certain cases) that fibered Dehn twists are non-trivial in $\pi_{0}(\mathrm{Symp}_{c}(W))$; sometimes, it is even shown that the fibered Dehn twists are of infinite order in $\pi_{0}(\mathrm{Symp}_{c}(W))$.

Theorem \ref{theorem:main-absolute} implies that any positive loop in $\mathrm{Cont}_{0}(Y)$ is inextensible, provided $\mathrm{SH}(W)\ne 0$. Since positive loops remain positive under iteration, one concludes that, if $\mathrm{SH}(W)\ne 0$, then \emph{every positive loop} induces a mapping class of infinite order in $\pi_{0}(\mathrm{Ham}_{c}(W))$.

Moreover, the atoroidal assumption of our paper allows us to upgrade this to the following result:
\begin{theorem}\label{theorem:exotic-mapping-class}
  Let $W$ be an atoroidal and convex-at-infinity symplectic manifold, and suppose $\mathrm{SH}(W)\ne 0$. Then every positive loop in $\mathrm{Cont}(Y)$ (based at the identity) induces a mapping class of infinite order in $\pi_{0}(\mathrm{Symp}_{c}(W))$.
\end{theorem}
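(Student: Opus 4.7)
The plan is to run the argument at the level of long exact sequences of two parallel Serre fibrations over $\mathrm{Cont}(Y)$, and to use atoroidality to control the discrepancy between them. Write $\mathcal{S}(W)$ for the group of contact-at-infinity symplectomorphisms of $W$; as in \S\ref{sec:fibr-sequ-sympl}, both $\mathrm{Ham}(W)\to\mathrm{Cont}(Y)$ and $\mathcal{S}(W)\to\mathrm{Cont}(Y)$ are Serre fibrations, with fibers $\mathrm{Ham}_c(W)$ and $\mathrm{Symp}_c(W)$ respectively. Let $\varphi_t$ be a positive loop in $\mathrm{Cont}(Y)$ with connecting image $\phi\in\pi_0(\mathrm{Ham}_c(W))$. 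By naturality of the long exact sequence applied to the map of fibrations, the image of $\phi^k$ in $\pi_0(\mathrm{Symp}_c(W))$ equals the connecting image of the iterated loop $[\varphi^{(k)}_t]\in\pi_1(\mathrm{Cont}(Y))$ in the $\mathcal{S}(W)$-fibration, where the connecting map is a group homomorphism because both fibers are topological groups.

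Theorem~\ref{theorem:main-absolute} together with $\mathrm{SH}(W)\ne 0$ gives that every iterate $\varphi^{(k)}_t$ is inextensible; by exactness in the $\mathrm{Ham}(W)$-fibration, the class $[\varphi^{(k)}_t]$ does not lie in the image of $\pi_1(\mathrm{Ham}(W))\to\pi_1(\mathrm{Cont}(Y))$. It therefore suffices to prove that this image coincides with the image of $\pi_1(\mathcal{S}(W))\to\pi_1(\mathrm{Cont}(Y))$, which in turn follows if the natural map $\pi_1(\mathrm{Ham}(W))\to\pi_1(\mathcal{S}(W))$ is surjective. The cokernel is measured by the flux homomorphism $\pi_1(\mathcal{S}(W))\to H^1(W;\R)$ sending a loop $\rho_t$ with generator $X_t$ to $\int_0^1[\iota_{X_t}\omega]\,\d t$; a standard Banyaga-type argument identifies its kernel with the image from $\pi_1(\mathrm{Ham}(W))$. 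Paired against a $1$-cycle $\gamma\subset W$, the flux evaluates to the symplectic area of the torus $(t,s)\mapsto\rho_t(\gamma(s))$, which vanishes by the atoroidal hypothesis. Hence flux is identically zero on loops in $\mathcal{S}(W)$, giving the required surjectivity and completing the argument.

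The main obstacle is the Banyaga-style identification of the kernel of flux with the image from $\pi_1(\mathrm{Ham}(W))$ in the contact-at-infinity setting: given a loop in $\mathcal{S}(W)$ of vanishing flux, one must construct a homotopy through $\mathcal{S}(W)$-loops to a Hamiltonian loop. The construction proceeds by correcting the generator $X_t$ by $\omega$-duals of closed $1$-forms on $W$ that absorb the cumulative flux, and the delicate point is to arrange these corrections to be $Z$-invariant outside a compact set so that the homotopy preserves the contact-at-infinity condition; a secondary technical point is to verify, analogously to \S\ref{sec:fibr-sequ-sympl}, that $\mathcal{S}(W)\to\mathrm{Cont}(Y)$ is indeed a Serre fibration.
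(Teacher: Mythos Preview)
Your approach is conceptually sound and rests on the same core idea as the paper's proof: atoroidality forces the flux to vanish, so symplectic loops can be replaced by Hamiltonian ones. The organization differs, however. You work abstractly with the long exact sequences of two parallel fibrations and reduce to showing $\pi_1(\mathrm{Ham}(W))\to\pi_1(\mathcal{S}(W))$ is surjective via a contact-at-infinity Banyaga argument. The paper instead argues directly by contradiction: assuming $\psi_1$ is trivial in $\pi_0(\mathrm{Symp}_c(W))$, one has a compactly supported symplectic isotopy $g_t$ with $g_1=\psi_1$, and $\eta_t=\psi_t g_t^{-1}$ is a contact-at-infinity symplectic loop with the given positive ideal restriction. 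The key maneuver is then to pick any contact-at-infinity Hamiltonian isotopy $\varphi_t$ (not a loop) with the same ideal restriction, so that $\mu_t=\eta_t^{-1}\varphi_t$ is \emph{compactly supported}; atoroidality makes the time-$1$ flux of $\mu_t$ exact, and the classical result from \cite[\S10.2]{mcduffsalamon-alt} produces a compactly supported Hamiltonian isotopy $\rho_t$ with $\rho_1=\mu_1$. Then $\varphi_t\rho_t^{-1}$ is a Hamiltonian loop with the desired positive ideal restriction, contradicting Theorem~\ref{theorem:main-absolute}.

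The practical upshot is that the paper's reduction to the compactly supported setting cleanly bypasses both of the technical obstacles you flag: there is no need to verify a Serre fibration property for $\mathcal{S}(W)\to\mathrm{Cont}(Y)$, and no need to carry out a Banyaga-type deformation while preserving the contact-at-infinity condition (in particular, no need to arrange flux-correcting one-forms to be Liouville-equivariant near infinity). Your route would work if those points are filled in, but the paper's trick of trading the contact-at-infinity loop $\eta_t$ for the compactly supported path $\mu_t$ makes the argument essentially immediate from standard flux theory.
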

The trick used to upgrade non-triviality in $\pi_{0}(\mathrm{Ham}_{c}(W))$ to non-triviality in $\pi_{0}(\mathrm{Symp}_{c}(W))$ uses the notion of \emph{flux} (see \cite{mcduff-flux} and \cite[\S10.2]{mcduffsalamon-alt}), and the fact that the \emph{flux group} vanishes for atoroidal manifolds; see \S\ref{sec:gener-past-ator} for further discussion.

Theorem \ref{theorem:exotic-mapping-class} is proved in \S\ref{sec:exot-sympl-flux}.

\subsubsection{Exotic Lagrangian fillings associated to inextensible loops}
\label{sec:exot-fillings-lagr}

There is a Lagrangian analogue of the discussion in \S\ref{sec:exot-sympl}, and we briefly describe the construction.

As shown in \S\ref{sec:serre-fibr-prop}, the ideal restriction is a Serre fibration from the space of all contact-at-infinity Lagrangians, $\mathrm{Lag}(W)$, to the space of Legendrians, $\mathrm{Leg}(Y)$.

Let $\mathrm{Lag}(W;\Lambda)$ be the fiber over $\Lambda$ of the ideal restriction map $\mathrm{Lag}(W)\to \mathrm{Leg}(Y)$. Fix a particular element $L\in \mathrm{Lag}(W;\Lambda)$. The Serre fibration property induces a connecting homomorphism:
\begin{equation}\label{eq:lag-sfp}
  \pi_{1}(\mathrm{Leg}(Y),\Lambda)\to \pi_{0}(\mathrm{Lag}(W;\Lambda)),
\end{equation}
as usual: lift any loop $\Lambda_{t}$ based at $\Lambda$ to a path based at $L$ and then evaluate it at the endpoint of the path; see \S\ref{sec:lagrangian-ideal-restriction} for further details. Let us denote by: $$[L,\Lambda_{t}]\in \pi_{0}(\mathrm{Lag}(W;\Lambda))$$ the image of $[\Lambda_{t}]\in \pi_{1}(\mathrm{Leg}(Y),\Lambda)$ under this morphism; it is a relative version of the symplectic mapping classes considered above. Let us say that $[L,\Lambda_{t}]$ is \emph{trivial} if it is the component of $\pi_{0}(\mathrm{Lag}(W;\Lambda))$ containing $L$.

Saying that $\Lambda_{t}$ is \emph{extensible} (relative $L$) implies that $[L,\Lambda_{t}]$ is trivial. Under certain assumptions the converse holds:
\begin{lemma}\label{lemma:technical-lag-flux}
  If $L$ is acylindrical and every closed compactly supported one-form on $L$ can be extended to a closed compactly supported one-form on $W$, then triviality of $[L,\Lambda_{t}]$ implies $\Lambda_{t}$ is extensible.
\end{lemma}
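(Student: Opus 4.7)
The plan is to use the Serre fibration property of $\mathrm{Ham}(W)\to\mathrm{Cont}(Y)$ to lift $\Lambda_t$ to an exact path $\varphi_t(L)$, $t\in[0,1]$, ending at $L_1:=\varphi_1(L)\in\mathrm{Lag}(W;\Lambda)$. Triviality of $[L,\Lambda_t]$ yields a smooth path $\tilde L_s$, $s\in[0,1]$, in $\mathrm{Lag}(W;\Lambda)$ from $L_1$ to $L$. The strategy is to produce a compactly supported contact-at-infinity Hamiltonian isotopy $\eta_s$ with trivial ideal restriction satisfying $\eta_s(L_1)=\tilde L_s$; once constructed, $\varphi'_t:=\eta_t\circ\varphi_t$ is a contact-at-infinity Hamiltonian loop based at $L$ whose ideal restriction equals $\bar\eta_t(\bar\varphi_t(\Lambda))=\Lambda_t$, establishing extensibility.

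The first step is to realize $\tilde L_s$ by a compactly supported symplectic isotopy. Since $\tilde L_s$ has constant ideal restriction $\Lambda$, all $\tilde L_s$ coincide outside a compact set $K\subset W$. Consequently the closed $1$-form $\beta_s$ on $\tilde L_s$ encoding the infinitesimal motion of the path (defined by $\beta_s=\iota_X\omega|_{\tilde L_s}$ for any symplectic vector field $X$ generating $\tilde L_s$, and well-defined because Lagrangian-tangent perturbations of $X$ do not affect it) vanishes outside $K$, so is compactly supported on $\tilde L_s$. By the extension hypothesis, $\beta_s$ extends to a closed compactly supported $1$-form $\tilde\beta_s$ on $W$; its $\omega$-dual is a compactly supported symplectic vector field whose flow is a compactly supported symplectic isotopy $\tilde\psi_s$ realizing $\tilde L_s$.

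The second step upgrades $\tilde\psi_s$ to a Hamiltonian isotopy. Consider the concatenated loop $\Gamma$ of Lagrangians based at $L$ obtained by following the exact path $\varphi_t(L)$ and then $\tilde L_s$. Its flux in $H^1(L;\R)$ can be evaluated on a smooth loop $\gamma\subset L$ as the symplectic area of the torus swept by $\gamma$ under $\Gamma$; acylindricity forces this area, and hence the flux, to vanish. Since the exact portion of $\Gamma$ contributes zero flux, the class $\int_0^1 [\beta_s]\,ds$ vanishes in $H^1(L;\R)$, so $\int_0^1\beta_s\,ds$ is exact on $L$. Using the extension hypothesis on the resulting primitive (and a time reparametrization that distributes the cohomological correction uniformly across $s$) adjusts $\tilde\beta_s$ within its compactly supported cohomology class so that each $\tilde\beta_s$ is exact on $W$, i.e., $\tilde\beta_s=dH_s$ for a compactly supported $H_s:W\to\R$. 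The Hamiltonian flow of $H_s$ provides the desired $\eta_s$.

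The main obstacle is the last step, converting the integrated-in-time vanishing of flux supplied by acylindricity into pointwise-in-time exactness on $W$ of the $\tilde\beta_s$. Acylindricity provides a single cohomological vanishing on $L$, while the extension hypothesis must be invoked a second time to lift the corresponding primitive from $L$ to a compactly supported function on $W$; the reparametrization must be chosen carefully so that the resulting Hamiltonians $H_s$ form a smooth family and the composition $\eta_t\circ\varphi_t$ remains contact-at-infinity Hamiltonian.
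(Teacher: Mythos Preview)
Your overall strategy matches the paper's: lift $\Lambda_t$ to an exact path, close it up with the smooth fiber path $\tilde L_s$ supplied by triviality, use acylindricality to show the total Lagrangian flux of the concatenated loop vanishes, and invoke the extension hypothesis to upgrade to a compactly supported Hamiltonian isotopy. The paper organizes the last step as a separate lemma (citing \cite{solomon-lagrangian-flux}): a compactly supported Lagrangian isotopy $j_t$ with $[F(j_t;1)]=0$ has $j_1(L)$ Hamiltonian isotopic to $L$, provided the extension hypothesis holds.

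Your execution of that last step has a genuine gap, however. First, you set as your goal a compactly supported Hamiltonian isotopy $\eta_s$ with $\eta_s(L_1)=\tilde L_s$ for \emph{all} $s$; this is generally impossible, since the partial flux $\int_0^s[\beta_\sigma]\,d\sigma$ need not vanish for intermediate $s$, only at $s=1$. All that is needed (and all that can be obtained) is $\eta_1(L_1)=L$. Second, your proposed mechanism cannot achieve even this: a time reparametrization merely rescales the forms $\beta_s$ and does not change their cohomology classes, and ``adjusting $\tilde\beta_s$ within its cohomology class so that it becomes exact'' is self-contradictory unless the class is already zero. The ``primitive'' of $\int_0^1\beta_s\,ds$ is a function on $L$, whose extension to $W$ is trivial and does not use the hypothesis at all.

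The correct argument, as in the paper (following Solomon), uses the extension hypothesis to produce a linear right inverse $\mathfrak{s}\colon H^1_{c,\mathrm{dR}}(L)\to H^1_{c,\mathrm{dR}}(W)$, and then \emph{deforms} the Lagrangian path (not merely reparametrizes it) by composing with a two-parameter family $\psi_{s,\tau}\in\mathrm{Symp}_c(W)$ whose symplectic flux in the $s$-direction at level $\tau$ equals $-\mathfrak{s}([F(j_t;\tau)])$. Since this class vanishes at $\tau=0$ and $\tau=1$, one can take $\psi_{s,0}=\psi_{s,1}=\id$, so the endpoints of the path are unchanged while the flux of the new path $\psi_{1,t}\circ j_t$ vanishes identically in $\tau$; the result is then an exact Lagrangian isotopy, to which ambient Hamiltonian isotopy extension applies.
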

There is a subtlety concerning the exactness of the loop which is resolved by considering a Lagrangian version of flux; see \S\ref{sec:lagrangian-flux} for further details. Unfortunately, we require the additional assumption on the compactly supported one-forms; see \cite{ono-lagrangian-flux,solomon-lagrangian-flux} for similar assumptions when considering Lagrangian flux. The authors are unsure whether or not the conditions on the closed one-forms can be removed. Our results yield:
\begin{theorem}\label{theorem:lagrangian-exotic}
  If $L\in \mathrm{Lag}(W;\Lambda)$ satisfies $\mathrm{HW}(L)\ne 0$, the hypotheses of Lemma \ref{lemma:technical-lag-flux} are satisfied, and $\Lambda_{t}$ is a positive loop based at $\Lambda$, then the isotopy classes $[L,\Lambda_{kt}]$ are all distinct for $k\in \Z$. Consequently, $\pi_{0}(\mathrm{Lag}(W;\Lambda))$ will contain infinitely many elements.
\end{theorem}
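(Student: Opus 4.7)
The plan is to argue by contradiction. Suppose that $[L,\Lambda_{kt}]=[L,\Lambda_{jt}]$ for some distinct integers $k\ne j$; after relabelling, assume $n:=k-j>0$. I will show this forces $[L,\Lambda_{nt}]$ to be trivial in $\pi_{0}(\mathrm{Lag}(W;\Lambda))$, and then combine Lemma~\ref{lemma:technical-lag-flux} with Theorem~\ref{theorem:main-relative} to contradict $\mathrm{HW}(L)\ne 0$.

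The central step is to exhibit a loop in $\mathrm{Lag}(W)$ based at $L$ whose ideal restriction realizes $[\Lambda_{t}]^{n}\in\pi_{1}(\mathrm{Leg}(Y),\Lambda)$. Using the Serre fibration property from \S\ref{sec:serre-fibr-prop}, I lift the loops $\Lambda_{kt}$ and $\Lambda_{jt}$ to paths $\tilde{\Lambda}^{(k)}_{t},\tilde{\Lambda}^{(j)}_{t}$ in $\mathrm{Lag}(W)$ starting at $L$, with endpoints $L_{k},L_{j}\in\mathrm{Lag}(W;\Lambda)$. The hypothesis provides a path $\gamma$ in the fiber $\mathrm{Lag}(W;\Lambda)$ from $L_{k}$ to $L_{j}$. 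Concatenating $\tilde{\Lambda}^{(k)}_{t}$, then $\gamma$, then the reverse of $\tilde{\Lambda}^{(j)}_{t}$ produces a loop $\sigma$ in $\mathrm{Lag}(W)$ based at $L$; its ideal restriction is, after reparametrization, homotopic to the concatenation $\Lambda_{kt}\cdot\Lambda_{-jt}$, which represents $[\Lambda_{t}]^{k-j}=[\Lambda_{nt}]$ in $\pi_{1}(\mathrm{Leg}(Y),\Lambda)$.

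Because the connecting morphism~\eqref{eq:lag-sfp} depends only on the homotopy class of the loop downstairs---a standard consequence of homotopy lifting---the existence of $\sigma$, which lifts a representative of $[\Lambda_{nt}]$ to a genuine loop at $L$, shows that $[L,\Lambda_{nt}]$ is trivial. Lemma~\ref{lemma:technical-lag-flux} then asserts that $\Lambda_{nt}$ is extensible, and since positivity of a Legendrian loop is preserved under positive iteration and $n>0$, the iterate $\Lambda_{nt}$ is a positive extensible loop based at $\Lambda$. Theorem~\ref{theorem:main-relative} now forces $\mathrm{HW}(L)=0$, contradicting the standing hypothesis. The second assertion is then immediate: the assignment $k\mapsto [L,\Lambda_{kt}]$ injects $\Z$ into $\pi_{0}(\mathrm{Lag}(W;\Lambda))$, so the latter contains infinitely many components.

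I expect the only delicate point to be the book-keeping around the Serre fibration---verifying that the reverse of a lift lifts the reversed loop, that the concatenation $\sigma$ genuinely realizes $[\Lambda_{t}]^{n}$ downstairs, and that $[L,\Lambda_{nt}]$ depends only on the homotopy class of $\Lambda_{nt}$ rather than its specific parametrization. All of these follow routinely from the homotopy lifting property established in \S\ref{sec:serre-fibr-prop}, but warrant explicit care to ensure the argument is airtight; the deeper content of the theorem lies entirely in the prior inputs (Lemma~\ref{lemma:technical-lag-flux} and Theorem~\ref{theorem:main-relative}).
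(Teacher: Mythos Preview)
Your proposal is correct and follows essentially the same approach as the paper. The paper's proof is the single sentence appearing immediately after the theorem statement: it asserts $[L,\Lambda_{kt}]=[L,\Lambda_{\ell t}]$ iff $[L,\Lambda_{(k-\ell)t}]$ is trivial, and then invokes Lemma~\ref{lemma:technical-lag-flux} and Theorem~\ref{theorem:main-relative} exactly as you do; your version simply unpacks the Serre fibration bookkeeping that the paper leaves implicit.
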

Indeed, one shows that $[L,\Lambda_{k t}]=[L,\Lambda_{\ell t}]$ if and only if $[L,\Lambda_{(k-\ell)t}]$ is trivial; if $k>\ell$ the latter follows from the Lemma \ref{lemma:technical-lag-flux} and Theorem \ref{theorem:main-relative}. This completes the proof.

Theorem \ref{theorem:lagrangian-exotic} is consistent with the famous result of \cite{eliashberg-polterovich-2-knots} which states that $\R^{2}\subset\R^{4}$ is the unique filling of its ideal restriction, since $\R^{2}\subset \R^{4}$ has vanishing wrapped Floer cohomology. It is also interesting to note that we do not recover the results of \cite{casals-gao,casals-ng}; these results use a finer algebraic structure and produce exotic fillings of certain Legendrian knots in the standard sphere $S^{3}$ (note that all such fillings will have vanishing wrapped Floer cohomology).

\subsubsection{Do Liouville manifolds with non-zero but finite-dimensional symplectic cohomology exist?}
\label{sec:open-quest-liouv}

It seems to be an open question whether there is a Liouville manifold with finite-dimensional and non-zero symplectic cohomology. Our result shows that the existence of an extensible positive loop of contactomorphisms implies the vanishing of symplectic cohomology. Having geometric criteria which ensure vanishing of $\mathrm{SH}$ simplifies the search for Liouville manifolds with finite-dimensional and non-zero symplectic cohomology, via process of elimination.

In \cite{ritter_negative_line_bundles,ritter_circle_actions}, examples of convex-at-infinity symplectic manifolds $W$ (containing symplectic spheres) are constructed so that $\mathrm{SH}(W)$ is a \emph{non-zero} quotient of the quantum cohomology ring $\mathrm{QH}(W)$, defined as $\mathrm{HF}(R^{\alpha}_{\epsilon t})$ for small $\epsilon>0$ for the purposes of this discussion; see \S\ref{sec:prequantization-bundles} for related discussion.

In the presence of holomorphic spheres, $\mathrm{QH}(W)$ and $\mathrm{SH}(W)$ are defined over the Novikov field of semi-infinite sums (over $\Z/2$) generated by symbols $\tau^{A}$ where $A\in \R$; the sums are semi-infinite in the sense that only finitely many terms in $\sum_{i=1}^{\infty} \tau^{A_{i}}$ should have $A_{i}$ less than $k$, for every $k$. Then $\mathrm{QH}(W)$ is finite dimensional over this field. Therefore the examples in \cite{ritter_negative_line_bundles,ritter_circle_actions} have $\mathrm{SH}(W)$ finite-dimensional and non-zero.

The arguments in our proof of Theorem \ref{theorem:main-absolute} break down in the presence of holomorphic spheres. Indeed, a key step in the proof is to find some system $\varphi_{t}$ that $\mathrm{HF}(\varphi_{t})\to \mathrm{SH}(W)$ is \emph{not} surjective. Here $\mathrm{HF}(\varphi_{t})$ is the Floer cohomology of a contact isotopy; see \S\ref{sec:floer-cohom-assoc}. When there are no quantum corrections, one can take a small negative Reeb flow $\varphi_{t}=R_{-\epsilon t}^{\alpha}$ in order to make $\mathrm{HF}(\varphi_{t})\to \mathrm{SH}(W)$ non-surjective; this is the content of Lemma \ref{lemma:non-surjective-lemma}. Essentially the idea is to prove the elements in the image of $\mathrm{HF}(R_{-\epsilon t}^{\alpha})\to \mathrm{HF}(R_{+\epsilon t}^{\alpha})$ are nilpotent with respect to the pair-of-pants product. This nilpotency may no longer be the case with respect to a quantum cup product.

The authors suspect that the following dichotomy should continue to hold in the presence of holomorphic spheres: either $\mathrm{HF}(\varphi_{t})\to \mathrm{SH}(W)$ is surjective \emph{for every} contact isotopy $\varphi_{t}$, or $\mathrm{SH}(W)$ is infinite dimensional.

Let us introduce $\mathrm{SH}^{e}(W)$ to be the subspace of $\mathrm{SH}(W)$ generated by those elements which lie in the image of $\mathrm{HF}(\varphi_{t})\to \mathrm{SH}(W)$ for every system $\varphi_{t}$; we call elements in this subspace ``eternal'' since they are never ``born.'' In the language of the persistence module considered in \cite{cant_sh_barcode} the eternal elements are generated by the fully infinite bars $(-\infty,\infty)$ while the quotient $\mathrm{SH}(W)/\mathrm{SH}^{e}(W)$ is generated by the half-infinite bars $[a,\infty)$.

Interestingly enough, it is precisely the elements of $\mathrm{SH}/\mathrm{SH}^{e}$ which are used to construct \emph{spectral invariants}; see \cite[\S1.2.5]{cant_sh_barcode}.

In the case of an aspherical manifold $W$, the arguments in \S\ref{sec:pss-comparison} can be performed and one can show that $\mathrm{SH}/\mathrm{SH}^{e}=0$ if and only if $\mathrm{SH}=0$ (one considers when the unit is born). The above alternative can therefore be conjectured as:
\begin{conjecture}\label{conj:alternative}
  For a convex-at-infinity symplectic manifold $W$, $$\mathrm{SH}(W)/\mathrm{SH}^{e}(W)$$ is either zero or infinite dimensional.
\end{conjecture}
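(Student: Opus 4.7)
The plan is to extend the aspherical argument (``look at when the unit is born'') to the general convex-at-infinity setting. Assume $\mathrm{SH}(W)/\mathrm{SH}^{e}(W) \ne 0$ is finite-dimensional and aim for a contradiction.

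\emph{Setup.} Using the colimit presentation $\mathrm{SH}(W) = \mathrm{colim}_{s} \mathrm{HF}(\varphi_{t}^{(s)})$ over a cofinal family of contact-at-infinity systems with ideal restrictions of increasing slope $s$, I would assign to each nonzero $\alpha \in \mathrm{SH}(W)$ its \emph{birth value}
\[
c(\alpha) := \inf\bigl\{\, s : \alpha \in \mathrm{image}\bigl(\mathrm{HF}(\varphi^{(s)}_{t}) \to \mathrm{SH}(W)\bigr)\, \bigr\} \in \R \cup \{-\infty\}.
\]
By construction $c(\alpha) = -\infty$ if and only if $\alpha \in \mathrm{SH}^{e}(W)$, and classes with distinct birth values have linearly independent images in $\mathrm{SH}/\mathrm{SH}^{e}$. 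Hence under the finite-dimensionality hypothesis, the birth values of non-eternal classes form a bounded set, say contained in $(-\infty,S]$.

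\emph{Core idea.} I would then construct an operator $T\colon \mathrm{SH}(W) \to \mathrm{SH}(W)$ that strictly increases $c$ by an arbitrary positive amount, contradicting the upper bound $S$. The natural candidate comes from the naturality trick at the heart of Theorems \ref{theorem:main-absolute} and \ref{theorem:main-relative}: to a positive contact isotopy $\phi_{t}$ with total positive contact action $A(\phi)>0$ one associates a continuation map on Floer cohomologies that, at the chain level, shifts the action filtration by $A(\phi)$. Promoting this to a shift automorphism of the colimit $\mathrm{SH}(W)$ and iterating it on any class in $\mathrm{SH}/\mathrm{SH}^{e}$ would produce an infinite sequence of classes with unbounded birth values.

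The main obstacle is the following tension. When $\phi_{t}$ is merely a contact path (not a loop), the naturality map $\mathrm{HF}(\varphi^{(s)}_{t}) \to \mathrm{HF}(\phi_{t} \cdot \varphi^{(s)}_{t})$ factors through the continuation maps in the directed system defining $\mathrm{SH}(W)$, and becomes an identification in the colimit rather than a genuine shift. On the other hand, if $\phi_{t}$ is a loop, Theorem \ref{theorem:main-absolute} implies $\mathrm{SH}(W) = 0$, rendering the entire setup vacuous. Bypassing this tension is the heart of the difficulty; one possibility is to work inside a completed or $\gamma$-filtered refinement of $\mathrm{SH}(W)$ in which the shift operator survives the colimit, and to push the spectral argument through in this completion. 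A separate concern is that, in the presence of holomorphic spheres, one must also control that the iterates of the shift do not become eternal through Novikov contributions; this is exactly the quantum-correction issue that, as the authors note, already obstructs the aspherical argument from generalising, and is presumably why Conjecture \ref{conj:alternative} remains open.
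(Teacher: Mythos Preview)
The statement you are attempting to prove is labelled \emph{Conjecture} in the paper and is explicitly left open; there is no proof in the paper to compare your proposal against. The paper's surrounding discussion (\S\ref{sec:open-quest-liouv}) only establishes two partial results: in the aspherical case one has $\mathrm{SH}/\mathrm{SH}^{e}=0$ iff $\mathrm{SH}=0$ via the nilpotency argument of \S\ref{sec:when-unit-born}, and in the presence of an extensible positive loop the twisting trick of \S\ref{sec:the-twist-trick} forces every colimit map to be surjective, hence $\mathrm{SH}=\mathrm{SH}^{e}$.

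Your proposal is not a proof and you acknowledge as much in its final paragraph. The setup with birth values is sound: classes with pairwise distinct birth values are indeed linearly independent modulo $\mathrm{SH}^{e}$, so finite-dimensionality of the quotient bounds the set of birth values. But the ``core idea'' then runs into exactly the dichotomy you identify and do not resolve: a positive contact \emph{path} gives maps that are absorbed into the directed system and become the identity on $\mathrm{SH}(W)$, while an extensible positive \emph{loop} already forces $\mathrm{SH}(W)=0$ by Theorem~\ref{theorem:main-absolute}. There is no shift operator to iterate. Your suggestion to pass to a filtered completion is a reasonable direction to explore, but as written it is a hope rather than an argument, and the quantum-correction issue you flag is precisely why the paper records this as a conjecture rather than a theorem.
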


Of course, in order for this conjecture to make sense, one needs to define $\mathrm{SH}(W)$ in the presence of holomorphic spheres; see \S\ref{sec:gener-past-ator} for further discussion.

The methods in \S\ref{sec:the-twist-trick} continue to apply even in the presence of holomorphic spheres (assuming semi-positivity or some virtual techniques). Thus one obtains: if the ideal boundary of $W$ admits an extensible positive loop, then $\mathrm{SH}(W)=\mathrm{SH}^{e}(W)$, i.e., the alternative in Conjecture \ref{conj:alternative} holds for manifolds admitting extensible positive loops.

It is perhaps interesting to compare this with \cite[\S4]{albers-kang-vanishing-RFH}, which discusses the vanishing of RFH in relation to Ritter's non-vanishing results for symplectic cohomology \cite{ritter_negative_line_bundles,ritter_circle_actions}; see also \cite{ritter-smith-open-closed} for a non-vanishing result for symplectic cohomology of monotone negative line bundles over closed monotone toric manifolds; here monotone means $[c_{1}]=\lambda[\omega]$ for some $\lambda>0$.

\subsubsection{Generalization past atoroidal}
\label{sec:gener-past-ator}

The atoroidal assumption is used in a few places in our paper. First, it is used to define $\mathrm{SH}(W)$ over the field $\Z/2$, rather than over a Novikov field. It seems that the aspherical assumption is not enough to ensure that $\mathrm{SH}(W)$ can be defined over $\Z/2$ because we require working with all free homotopy classes of orbits when considering naturality transformations.

It appears that one can relax the atoroidal/aspherical assumption at the expense of working over Novikov fields and assuming some sort of semi-positivity as in \cite{ritter_negative_line_bundles,ritter_circle_actions} (or using some sort of virtual perturbation techniques). Such extensions are left to further research, e.g., \cite{dusan_in_preparation}.

Another place where the atoroidal assumption seems more essential is when considering the flux associated to symplectic isotopies which are contact-at-infinity; it is used to argue that the flux of any loop is zero.

After the first version of this paper was posted, the authors learned\footnote{The authors wish to thank M.~Atallah and D.~ Rathel-Fournier for pointing this out.} that \emph{toroidally montone} manifolds $W$ have a vanishing flux group (i.e., the symplectic form is proportional to $c_{1}(W)$ when integrated over any smooth 2-tori). The reason is fairly straighforward: $TW$ can be symplectically trivialized when pulled back to the particular 2-tori which relevant to the computation of the flux of a loop; see also \cite[Theorem 1]{mcduff-flux} for a more general statement. There is also an analogue of \emph{cylindrically monotone} Lagrangians $L$, where the symplectic form is proportional to the Maslov class, and such Lagrangians have a vanishing flux group, in the sense that any loop based at $L$ has vanishing flux.

\subsection{Examples of extensible and inextensible positive loops}
\label{sec:exampl-extens-inext}

We discuss various examples of convex-at-infinity symplectic manifolds admitting extensible and inextensible positive loops.

\subsubsection{Negative line bundles}
\label{sec:prequantization-bundles}

In this section we show that prequantization bundles can be filled in a convex-at-infinity way. The construction provides examples of atoroidal fillings that are not Liouville fillings. The manifolds we construct are total spaces of negative line bundles as in \cite{ritter_negative_line_bundles,ritter_circle_actions,albers-kang-vanishing-RFH,venkatesh_quantitative_nature,albers_kang}.

Let $\pi:Y\to B$ be a principal $S^{1}$ bundle whose Euler class is represented by a symplectic form on $B$. Fix then a symplectic form $\omega$ so that:
\begin{equation*}
  [\omega]=-2\pi(\text{Euler class}).
\end{equation*}
This bundle admits a connection $1$-form $\alpha$ which is also a contact form whose Reeb flow generates the principal $S^1$-action (with period $2\pi$); see, e.g., \cite[Theorem 7.2.4]{Geiges}. As part of the construction it is shown that  $\pi^{*}\omega=\d\alpha$.

The $S^{1}$-bundle $\pi\colon Y\rightarrow B$ is the unit circle bundle associated to a complex line bundle $\pi\colon W\rightarrow B$. The total space $W$ can be naturally considered as a convex-at-infinity symplectic manifold with ideal boundary $(Y,\ker \alpha)$ as follows. A fiberwise unitary metric on $W$ gives rise to a radial coordinate $r$. The space $Y$ is identified with the set $\{r=1\}\subset W$.
The closed $2$-form:
$$\Omega:=\pi^{\ast}\omega+\d(r^2\alpha)$$
is a symplectic form on $W$; indeed, one can write it as $(1+r^{2})\pi^{*}\omega+2r\d r\wedge \alpha$, which is easily seen to be non-degenerate everywhere; see also \cite[pp.\,656]{mcduff-contact-boundaries}.

Note that away from the zero-section the form $\Omega$ is given by:
$$\Omega=\d((1+r^2)\alpha).$$

In particular $(1+r^2)\alpha$ is a Liouville form and induces the contact structure $\ker\alpha$ on $B$.
Because the zero-section of $W$ is a closed symplectic submanifold, $\Omega$ does not admit a Liouville form on the entire total space $W$.

If $B$ is symplectically atoroidal then so is $W$. Indeed, any torus in $W$ is homotopic to a torus in the zero-section, which is a submanifold symplectomorphic to $B$.

Similarly to case of the stabilizations considered in \S\ref{sec:stab-liouv-manif}, the Hamiltonian: $$H=2\pi r^2$$ generates an extensible positive loop (the vector field generating the ideal restriction is $2\pi R_{\alpha}$).

Consequently, if $B$ is atoroidal, Theorem \ref{theorem:main-absolute} ensures that the symplectic cohomology of $W$ vanishes. This recovers a special case of the results of \cite{oancea_fibered_sh} and \cite[Corollary 2]{ritter_negative_line_bundles}; the latter result states that the symplectic cohomology of the total space of a negative line bundle vanishes if $c_{1}(Y\to B)$ is torsion in the quantum cohomology ring of $W$ (which certainly holds when $W$ is atoroidal, since the quantum cohomology agrees with the usual cohomology); see also \cite{ritter_circle_actions,venkatesh_quantitative_nature,bae-kang-kim}.

Interestingly, \cite[Corollary 2]{ritter_negative_line_bundles} implies that $\mathrm{SH}(W)$ does \emph{not} vanish if the class $c_{1}(Y\to B)$ is not torsion in the quantum cohomology ring of $W$. Of course, in order to define $\mathrm{SH}(W)$ in this level of generality, one needs to deal with the effect of holomorphic spheres; see \S\ref{sec:gener-past-ator}. This implies that Theorem \ref{theorem:main-absolute} does not hold for arbitrary convex-at-infinity $W$ and the presence of holomorphic spheres ruins the vanishing result; see \S\ref{sec:open-quest-liouv} for further discussion.

It is also interesting to compare this vanishing result with the Rabinowitz Floer homology invariants for prequantization bundles constructed in \cite{albers_shelukhin_zapolsky,albers_kang}. See \cite{albers-kang-vanishing-RFH} which proves such a vanishing result for RFH for negative line bundles, (even in the presence of holomorphic spheres).

\subsubsection{Magnetic flows as extensible loops on the cotangent bundle of the two-sphere}
\label{sec:magnetic-flows-as}
It is well known that the symplectic cohomology of a cotangent bundle is isomorphic to the singular homology of the free loop space of the base and in particular is non-zero; see, e.g., \cite{abbondandolo-schwarz,abouzaid_monograph}.
Our results then imply that a periodic Reeb flow on the spherical cotangent bundle is never extensible in the standard cotangent bundle.
Nevertheless, in certain cases, the symplectic form can be twisted with a magnetic term to provide a filling of the spherical cotangent bundle for which the periodic Reeb flow \emph{is} extensible.

Following \cite[\S2]{seidel_thesis} and \cite[\S2a]{seidel-lecture-4d-dehn-twist}, we consider here the case of $T^{*}S^{2}$ and construct a symplectic form $\Omega$ on $T^{\ast}S^2$ which coincides with the standard symplectic form outside of a compact set and provides a filling of $ST^{\ast}S^2$ for which the standard periodic Reeb flow is extensible.

Let $\sigma$ be the area form and let $r$ be the radial coordinate on $T^{*}S^{2}$ associated to the round metric on $S^{2}$. Let $\Sigma(r_{0})=\set{r=r_{0}}$. Since $\mathrm{SO}(3)$ acts on $S^{2}$ by isometries, the associated canonical transformations induce an $\mathrm{SO}(3)$ action on $T^{*}S^{2}$ which is free and transitive on each hypersurface $\Sigma(r_{0})$.

Denote by $\pi:\Sigma(1)\to S^{2}$ the projection map. Since $\Sigma(1)$ admits $S^{3}$ as a covering space, it has trivial second deRham cohomology, and hence $\pi^{*}\sigma=\d\Pi$ for some one-form $\Pi$ on $\Sigma(1)$. Moreover, by a standard averaging argument, we may suppose that $\Pi$ is $\mathrm{SO}(3)$-invariant.

Since $\Pi$ is not pulled back from the base (as $\sigma$ is not exact), $\Pi$ is non-vanishing on the circular cotangent fibers, and hence $\Pi\wedge \d \Pi\ne 0$, i.e., $\Pi$ actually defines a contact structure on $\Sigma(1)$; indeed, one can realize $\Pi$ as a prequantization contact form of the kind considered in \S\ref{sec:prequantization-bundles}; see \cite{albers-geiges-zehmisch,allout-saglam} for further discussion.

Extend $\Pi$ to the complement of the zero section, using the fiberwise radial projection map.
Note that $\d\Pi=\pi^{\ast}\sigma$ extends smoothly to the zero section.

Let $f(r)$ be a smooth function so $f(r)=1$ for $r$ near zero, $f(r)=0$ for $r$ sufficiently large, and suppose $-\delta< f(r)f'(r)< \delta$, for some small number $\delta$. Then define:
\begin{equation}\label{eq:magnetic-symplectic-form}
  \Omega:=\d(\lambda+f(r)\Pi),
\end{equation}
where $\lambda$ denotes the standard Liouville form on $T^{\ast}S^2$.

\begin{lemma}\label{lemma:non-deg-magnetic}
The two-form $\Omega$ is a symplectic form on $T^{\ast}S^2$ that coincides with the standard symplectic form when $r$ is sufficiently large, provided $\delta$ is sufficiently small. In particular $(T^{\ast}S^2,\Omega)$ is a filling of the standard spherical cotangent bundle of $S^2$.
\end{lemma}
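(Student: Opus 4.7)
The form $\Omega$ is closed by construction and equals $\d\lambda$ wherever $f\equiv 0$, which covers all sufficiently large $r$. Near the zero section $f\equiv 1$ forces $f'\,\d r\wedge\Pi=0$ and $f\,\d\Pi=\pi^{\ast}\sigma$, so $\Omega=\d\lambda+\pi^{\ast}\sigma$ is smooth across the zero section; and $(\pi^{\ast}\sigma)^{2}=0$, $\d\lambda\wedge\pi^{\ast}\sigma=0$ for dimensional reasons (the base is $2$-dimensional), so $\Omega\wedge\Omega=(\d\lambda)^{2}\ne 0$ there.

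On $T^{\ast}S^{2}\setminus\set{r=0}$, expanding $\Omega=\d\lambda+f'\,\d r\wedge\Pi+f\pi^{\ast}\sigma$ and using the same vanishings gives
\begin{equation*}
\Omega\wedge\Omega=(\d\lambda)^{2}+2f'\,\d\lambda\wedge\d r\wedge\Pi+2ff'\,\d r\wedge\Pi\wedge\d\Pi.
\end{equation*}
Identify $T^{\ast}S^{2}\setminus\set{r=0}\cong(0,\infty)_{r}\times\Sigma(1)$ via the radial projection, and write $\lambda=r\lambda_{0}$ with $\lambda_{0}:=\lambda|_{\Sigma(1)}$. A short wedge computation yields $\d\lambda\wedge\d r\wedge\Pi=r\,\d r\wedge\Pi\wedge\d\lambda_{0}$; granting the key identity $\Pi\wedge\d\lambda_{0}\equiv 0$ on $\Sigma(1)$ (proved below), the middle term vanishes. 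Moreover $(\d\lambda)^{2}=2r\,\d r\wedge\lambda_{0}\wedge\d\lambda_{0}=2ra\,\d r\wedge\Pi\wedge\d\Pi$, where $a>0$ is the $\mathrm{SO}(3)$-invariant (hence constant) ratio $\lambda_{0}\wedge\d\lambda_{0}=a\,\Pi\wedge\d\Pi$. Therefore $\Omega\wedge\Omega=2(ra+ff')\,\d r\wedge\Pi\wedge\d\Pi$. On $\set{f'\ne 0}$, which is contained in $\set{r\geq r_{0}}$ for some $r_{0}>0$ (since $f\equiv 1$ near the zero section), choosing $\delta<r_{0}a$ ensures $ra+ff'>0$; on $\set{f'=0}$, $\Omega\wedge\Omega=(\d\lambda)^{2}\ne 0$.

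For the identity $\Pi\wedge\d\lambda_{0}=0$: fixing a basepoint identifies $\Sigma(1)\cong\mathrm{SO}(3)$, so the $\mathrm{SO}(3)$-invariant forms $\Pi$ and $\lambda_{0}$ become left-invariant, i.e., linear combinations of a Maurer-Cartan basis $\omega_{1},\omega_{2},\omega_{3}\in\mathfrak{so}(3)^{\ast}$. Since the Reeb flow of $\Pi$ is the $S^{1}$-fiber action of $\pi\colon\Sigma(1)\to S^{2}$, $\Pi$ is proportional to the vertical Maurer-Cartan form, say $\Pi=-C\omega_{3}$. The canonical form $\lambda_{0}(v)=p(\d\pi(v))$ vanishes on vertical vectors, so $\lambda_{0}\in\mathrm{span}(\omega_{1},\omega_{2})$. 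The Maurer-Cartan equations $\d\omega_{1}=-\omega_{2}\wedge\omega_{3}$ and $\d\omega_{2}=\omega_{1}\wedge\omega_{3}$ then place $\d\lambda_{0}$ in $\mathrm{span}(\omega_{1}\wedge\omega_{3},\omega_{2}\wedge\omega_{3})$, so $\omega_{3}\wedge\d\lambda_{0}=0$ and hence $\Pi\wedge\d\lambda_{0}=0$. This identity is the main obstacle: without it, the bound $|ff'|<\delta$ alone cannot control the middle term $2f'\,\d\lambda\wedge\d r\wedge\Pi$, since $|f'|$ is unbounded where $|f|$ is small.
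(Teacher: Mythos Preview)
Your proof is correct and follows essentially the same route as the paper: compute $\Omega\wedge\Omega$, show the cross term $2f'\,\d\lambda\wedge\d r\wedge\Pi$ vanishes via the identity $\Pi\wedge\d\lambda_{0}=0$ on $\Sigma(1)$, and then control the remaining $ff'$ contribution by taking $\delta$ small. The only difference is in how that identity is established: the paper observes that $\Pi\wedge\d\lambda$ and $\lambda\wedge\d\Pi$ are $\mathrm{SO}(3)$-invariant top-forms on $\Sigma(r)$ whose difference $\d(\Pi\wedge\lambda)$ is exact (hence zero by invariance), and then checks $\lambda\wedge\d\Pi=0$ in canonical coordinates, whereas you argue directly with the Maurer--Cartan equations on $\mathrm{SO}(3)$; note also that you only need $a\ne 0$, not $a>0$, for the final step.
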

The argument is straightforward and is given in \S\ref{sec:hamilt-s1-acti}.

\begin{lemma}\label{lemma:magnetic-circle-action}
There exists a Hamiltonian circle action on $(T^{\ast}S^2,\Omega)$ whose ideal restriction is the standard periodic Reeb flow on $ST^{\ast}S^2$, provided $\delta$ is small enough.
\end{lemma}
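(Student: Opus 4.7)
The plan is to exhibit an explicit $\mathrm{SO}(3)$-invariant Hamiltonian of the form $H = g(r)$ whose flow is a circle action of period $2\pi$ with the correct asymptotic behavior at infinity. Because $\mathrm{SO}(3)$ acts transitively on each level set $\Sigma(r) \cong \mathrm{SO}(3)$, I expect $X_H$ on $\Sigma(r)$ to lie in the three-dimensional algebra of $\mathrm{SO}(3)$-invariant vector fields, which is spanned by $R_\alpha$ (the Reeb of $\alpha := \lambda|_{\Sigma(1)}$, generating the geodesic flow), $R_\Pi$ (the Reeb of $\Pi$, generating the fiber rotation), and a third invariant vector field $W$; in the biinvariant metric these three are orthonormal.

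Writing $X_H = aR_\alpha + bR_\Pi + cW$ with $a,b,c$ functions of $r$, I would expand the Hamiltonian equation $\iota_{X_H}\Omega = -g'(r)\,\d r$ into a tangential 1-form condition on $\Sigma(r)$ and a $\d r$-coefficient equation. The relevant pairings, verified by computation at a single point and extended by $\mathrm{SO}(3)$-invariance, are $\iota_{R_\Pi}\d\alpha = \iota_{R_\alpha}\d\Pi$, $\iota_W\d\alpha = \Pi$, and $\iota_W\d\Pi = \alpha$. The tangential equation then forces $c = 0$ and $b = -f(r)a/r$, while the $\d r$-equation gives $a(1 - f(r)f'(r)/r) = g'(r)$. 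On $\Sigma(r) \cong \mathrm{SO}(3)$, the flow of $aR_\alpha + bR_\Pi$ is right-multiplication by a one-parameter subgroup whose infinitesimal generator has biinvariant norm $\sqrt{a^2+b^2}$, hence has period $2\pi/\sqrt{a^2+b^2}$. To obtain a Hamiltonian $S^1$-action of period $2\pi$ uniformly in $r$, I impose $a^2 + b^2 = 1$, which combined with the above relations produces the ODE
\[
g'(r) = \frac{r - f(r)f'(r)}{\sqrt{r^2 + f(r)^2}}.
\]
This ODE is well-posed as long as $r > f(r)f'(r)$, which holds because $f \equiv 1$ (hence $f' \equiv 0$) in a neighborhood of the origin and because the hypothesis $|ff'| < \delta$ with $\delta$ small ensures $r > |ff'|$ elsewhere.

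It remains to check smoothness and identify the ideal restriction. Near $r = 0$ the ODE gives the explicit formula $g(r) = \sqrt{1 + r^2} - 1$, a smooth function of $r^2 = |p|^2$, so $H$ is smooth on all of $T^{\ast}S^2$. The apparent singularity of $R_\alpha \sim (p/|p|)\cdot \partial_q$ at the zero section is exactly cancelled by the linear vanishing $a \sim r$, producing the smooth vector field $aR_\alpha = p/\sqrt{1+|p|^2}\cdot \partial_q + O(|p|^2)$; together with the manifestly smooth $bR_\Pi = -Jp/\sqrt{1+|p|^2}$, this shows that $X_H$ extends smoothly through the zero section. Since $X_H = 0$ on the zero section and the flow on each $\Sigma(r)$ with $r > 0$ has period $2\pi$, the time-$2\pi$ map is the identity on all of $T^{\ast}S^2$, and $H$ generates a Hamiltonian $S^1$-action. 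For $r$ sufficiently large $f \equiv 0$, so $g'(r) = 1$ and $X_H = R_\alpha$; the induced contact flow on $\Sigma(1) = ST^{\ast}S^2$ is the Reeb flow of $\alpha$, which is the standard $2\pi$-periodic geodesic flow. The main obstacle is the smooth extension of $X_H$ through the zero section, which is resolved by the linear vanishing of $g'(r)$ at the origin exactly cancelling the singularity of $R_\alpha$.
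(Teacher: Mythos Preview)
Your approach is essentially the same as the paper's: both span the characteristic foliation of each $\Sigma(r)$ by a linear combination of the Reeb fields $R_\alpha$ and $R_\Pi$ (the paper's $R$ and $Y$), invoke $\mathrm{SO}(3)$-invariance to conclude the flow on each level is periodic, and then reparametrize the Hamiltonian so the period becomes constant, with smoothness across the zero section following because the magnetic term keeps the period bounded as $r\to 0$. The paper is less explicit---it carries an undetermined proportionality constant $A$ in place of your asserted identity $\iota_{R_\Pi}\d\alpha = \iota_{R_\alpha}\d\Pi$ and uses an abstract period function $\mathfrak{p}(h)$ rather than your closed form $2\pi/\sqrt{a^2+b^2}$; your asserted pairings and orthonormality are correct up to signs depending on orientation conventions (a direct computation gives $A=\pm 1$), so the exact ODE for $g'$ may differ by a sign in the $ff'$ term, but the strategy and the conclusion are unaffected.
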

See \cite{seidel_thesis,seidel-lecture-4d-dehn-twist} for a proof based on the moment map associated to the $\mathrm{SO}(3)$ action; see also \cite{bimmermann-2023} for an explicit construction of a Hamiltonian function which generates an $S^{1}$ action in a similar context. Our approach is based on an analysis of the characteristic foliations of the surfaces $\Sigma(r)$ as $r$ varies. The proof is given in \S\ref{sec:hamilt-s1-acti}.

To complete this section, we explain why our proof of Theorem \ref{theorem:main-absolute} can be adapted to show that $\mathrm{SH}(T^{*}S^{2},\Omega)$ is well-defined and vanishes, recovering a result of \cite{groman-merry-v4,benedetti-ritter}.

The crux of the matter is that $c_{1}(T^{*}S^{2},\Omega)=0$ (this follows from the case when $\Omega=\d\lambda$, and by continuity of the first Chern class with respect to deformations of the symplectic structure). This observation implies that \emph{generically there are no non-constant $J$-holomorphic spheres}; see \cite[Proposition 2.3.(ii)]{hofer-salamon-95}, recalling that $T^{*}S^{2}$ is a four-manifold. See also \cite{ritter_negative_line_bundles,mclean-ritter-mckay-correspondence} for further discussion of the consequences of the vanishing of $c_{1}$.

In particular, the arguments which rely on the aspherical assumption to avoid holomorphic spheres will go through. We should note, however, that we require working over the Novikov field, as explained in \S\ref{sec:gener-past-ator}.

\subsubsection{Divisor complements}
\label{sec:divisor-complements}

Let us call any closed submanifold which is Poincaré dual to a positive multiple of the symplectic form a \emph{symplectic divisor}; such divisors exist in every symplectic manifold (with an integral symplectic form) by work of \cite{donaldson96}. It is immediate that the complement of a divisor is exact. Moreover, the normal form for neighborhoods of symplectic submanifolds implies it has a contact type boundary; see \cite[Lemma 2.6]{mcduff-contact-boundaries}. Thus a divisor complement can be completed to an atoroidal convex-at-infinity manifold.\footnote{Moreover, the divisor complement is actually Liouville, i.e., the primitive in the end extends to the filling; see \cite{opshtein-JSG-2013,giroux-remarks-donaldsons,diogo-lisi-2019-jtopol}.} It can be seen that the normal form of the neighborhood of a symplectic divisor is as a neighborhood in a \emph{positive line bundle} (see \cite{mcduff-contact-boundaries,biran-lagrangian-barriers}), and consequently one can find a periodic Reeb flow on the boundary of the divisor complement; i.e., one can always find positive loops in the ideal boundaries of completed divisor complements.

In many cases it is known that completed divisor complements have non-vanishing symplectic cohomology. For instance, if $\Sigma\subset M$ is a symplectic divisor and $\omega$ vanishes on all spheres in $M$, then the linking Reeb orbit around $\Sigma$ is non-contractible in $M\setminus \Sigma$ and hence the unit in $\mathrm{SH}(W)$ is non-zero, where $W$ is the completion of $M\setminus \Sigma$. Another instance is when $\Sigma\subset \mathrm{CP}^{n}$ is the divisor cut out by $z_{0}^{2d}+\dots+z_{n}^{2d}=0$, since this divisor complement contains a weakly exact Lagrangian (namely $\mathrm{RP}^{n}$). It is a well-known result due to \cite{viterbo_functors_and_computations_1} that containing a weakly-exact Lagrangian implies non-vanishing of symplectic cohomology.

For divisors with non-vanishing symplectic cohomology, our result on symplectic mapping classes applies and we conclude the connecting homomorphism induces a mapping class of infinite order.

For other work on divisor complements and their symplectic cohomology, we refer the reader to: \cite{biran-lagrangian-barriers,diogo-thesis,albers_merry_orderability_non_squeezing_RFH,diogo-lisi-2019-jtopol,diogo-lisi-2019-jfpta,mclean20-annals,borman-sheridan-varolgunes,bae-kang-kim}

\subsubsection{Links of singularities of weighted homogeneous polynomials}
\label{sec:links-singularities}

The link of an isolated singularity of a weighted homogeneous polynomial admits a positive loop of contactomorphisms; see \cite[Proposition 2.5]{kwon-van-koert}. Moreover, assuming the isolated singularity at the origin is actually a critical point, the singularity has a positive Milnor number; see \cite{milnor68-singular-points,milnor-orlik}. Consequently, the filling has non-vanishing symplectic homology as proved in \cite[Theorem 6.3]{kwon-van-koert}; one shows this using the result of \cite{viterbo_functors_and_computations_1} which says convex-at-infinity manifolds containing weakly exact Lagrangians have non-vanishing symplectic cohomology; see also \cite[\S5]{seidel-biased}. Hence Theorem \ref{theorem:exotic-mapping-class} applies in this setting, and we conclude symplectic mapping classes of infinite order.

\subsection{Acknowledgements}
\label{sec:acknowledgements}

The authors wish to thank O.~Cornea, Y.~Eliashberg, E.~Shelukhin, and I.~Uljarevi\'c for their valuable guidance during the preparation of this paper. The authors also wish to thank M.~Atallah, D.~Chen, and D.~Rathel-Fournier for clarifying discussions after the first version of this paper was posted. The first two authors were supported in their research at Universit\'e de Montr\'eal by funding from the Fondation Courtois.

\section{Symplectic cohomology and positive loops of contactomorphisms}
\label{sec:symplectic-cohomology-and-positive-loops}

Following \cite{merry_ulja,ulja_zhang,cant_sh_barcode,djordjevic_uljarevic_zhang}, to each system $\psi_{t}\in \mathrm{Ham}(W)$ whose ideal restriction has no discriminant points we associate a Floer cohomology group $\mathrm{HF}(\psi_{t})$; we describe the construction in \S\ref{sec:floer-cohom-groups}, following the conventions in \cite{cant_sh_barcode}. See \S\ref{sec:role-discr-points} for definition of discriminant point.

One aspect of our definition that merits mentioning is that $\mathrm{HF}(\psi_{t})$ depends only on the time-$1$ map $\psi_{1}$; however, various decorations such as supergradings and action filtrations do depend on the system $\psi_{t}$.

There are two important relationships involving the groups $\mathrm{HF}(\psi_{t})$. First, there are the \emph{continuation maps} associated to non-negative paths. Recall that a path $\psi_{s,t}$ is \emph{non-negative} if the ideal restriction of the time-1 maps $\psi_{s,1}$ is a non-negative path of contactomorphisms. In \S\ref{sec:sympl-cohom-as} we recall the definition of $\mathrm{SH}(W)$ as a colimit of groups $\mathrm{HF}(\psi_{t})$ with respect to continuation maps.

The second important relationship is the naturality transformation associated to a loop $\varphi_{t}$ in $\mathrm{Ham}(W)$ based at the identity; see \cite[\S4]{seidel_representation}, \cite[\S1.2]{ritter_negative_line_bundles} and \cite[\S2.7]{uljarevic_floer_homology_domains}. One observes that the systems $\psi_{t}$ and $\varphi_{t}\psi_{t}$ have the same fixed points at $t=1$ (since $\varphi_{1}=\id$), and so the chain complexes $\mathrm{CF}(\psi_{t})$ and $\mathrm{CF}(\varphi_{t}\psi_{t})$ are literally the same. This identity map is a chain map and the isomorphism $\mathrm{HF}(\psi_{t})\to \mathrm{HF}(\varphi_{t}\psi_{t})$ is called a \emph{naturality transformation}; see \S\ref{sec:natur-transf} for more details.

If the ideal restriction of $\varphi_{t}$ is a non-negative loop, then the path $\psi_{s,t}=\varphi_{st}\psi_{t}$ is a non-negative path and hence has an associated continuation map. The main strategy of this paper is to compare the two morphisms $\mathrm{HF}(\psi_{t})\to \mathrm{HF}(\varphi_{t}\psi_{t})$ (naturality versus continuation) in such a way as to conclude the colimit $\mathrm{SH}(W)$ vanishes; see \S\ref{sec:the-twist-trick}.

The argument makes use of the ring structures on symplectic and wrapped Floer cohomologies. In particular, the strategy is to show that the unit of the respective ring vanishes, and thus the ring is 0. In order to effectively implement this, it is also necessary to give a practical characterization of the unit, and we will follow \cite{ritter_TQFT}. In \S\ref{sec:PSS-and-the-unit-sympl-cohom} we give some brief exposition of this setup, and state the relevant result for symplectic cohomology.

\subsection{Fibration sequence of symplectomorphism groups}
\label{sec:fibr-sequ-sympl}

The goal in this section is to explain why the ideal restriction homomorphism $\mathrm{Ham}(W)\to \mathrm{Cont}(Y)$ is a Serre fibration. For further details, we refer the reader to \cite[\S2.3]{ulja_drobnjak} and \cite[pp.\,43]{merry_ulja}, both of which refer to an unpublished manuscript of Biran and Giroux; see also \cite{cool_gadget}. The paper \cite[\S3]{casals-keating-smith} constructs the connecting homomorphism as a symplectic \emph{Gromoll map} and uses this to study symplectic mapping class groups; see also \cite[\S{A}]{casals-keating-smith} for appendix by S.~Courte which describes the Serre fibration.

In \S\ref{sec:topol-group-cont} we define the topologies on $\mathrm{Ham}(W)$ and $\mathrm{Cont}(Y)$ and in \S\ref{sec:proof-serre-fibr-prop} we sketch the proof of the Serre fibration property.

\subsubsection{Topologizing the space of contact-at-infinity Hamiltonian diffeomorphisms}
\label{sec:topol-group-cont}

We topologize $\mathrm{Ham}(W)$ as follows. For each Hamiltonian diffeomorphism $\psi$ and compact set $K$ large enough that $\psi$ is equivariant outside of $K$, define $U_{\delta}(\psi,K)$ to be all other Hamiltonian diffeomorphisms which are equivariant outside of $K$ and which remain a distance at most $\delta$ from $\psi$ in some $C^{\infty}$ distance (using a metric which is translation invariant in the convex end).

Since $\mathrm{Ham}(W)$ only consists of the time-1 maps of contact-at-infinity systems, it follows that these open sets $U_{\delta}(\psi,K)$ cover $\mathrm{Ham}(W)$; moreover, they form a basis for a topology. Because we use a translation invariant metric, it follows that the ideal restriction map is continuous.

The topology has the property that, given a continuous map $x\mapsto \varphi_{x}\in \mathrm{Ham}(W)$ defined on a compact disk $D$, there exists a fixed compact set $K$ so that $\varphi_{x}$ is equivariant outside of $K$ for each $x\in D$; see \cite[\S2.2.3]{cant_sh_barcode} for related discussion.

\subsubsection{Proof of the Serre fibration property}
\label{sec:proof-serre-fibr-prop}

To show that the ideal restriction map \( \mathrm{Ham}(W) \to \mathrm{Cont}(Y) \) is a Serre fibration we must complete the lifting diagram:
\begin{equation*}
  \begin{tikzcd}
    {D^n} \arrow[r,"\Phi_{0}"] \arrow[d,hook,swap,"\mathrm{id}\times \set{0}"] & {\mathrm{Ham}(W)} \arrow[d]\\
    {D^n \times {[0,1]}} \arrow[r,"\phi"] \arrow[ur,dashed,"\Phi"] & {\mathrm{Cont}(Y)}
  \end{tikzcd}.
\end{equation*}
By inverting the time 0 map \( \phi(x,0) \) we obtain \( \tilde{\phi}(x,t) := \phi(x,0)^{-1}\circ \phi(x,t) \), a continuous map \( D^n \times [0,1] \to \mathrm{Cont}(Y) \). The well-known correspondence between contactomorphisms of $Y$ and equivariant symplectomorphisms of $SY$ yields a map \( \psi:D^n \times [0,1] \to \mathrm{Ham}(SY) \) whose ideal restriction is $\tilde{\phi}$. Let \( H_{x,t}:SY \to \mathbb{R} \) denote a (smooth) family of Hamiltonian functions generating \( \psi \).

As part of the data of a contact-at-infinity symplectic manifold \( W \) we fix a (symplectic) embedding of some half space within \( SY \) to \( W \). Using this embedding, we can push forward the Hamiltonians \( H_{x,t} \) to the end of $W$. Cutting off by a smooth function \( f:SY\to \R \) which vanishes on the negative end of our half space, and which is identically 1 outside of a compact set, we obtain, by flowing, a family \( \tilde{\Phi} \) of contact-at-infinity Hamiltonian symplectomorphisms of \( W \) whose ideal restriction is \( \tilde{\phi} \); it is important to note that the construction yields $\tilde{\Phi}_{x,0}=\id$. Then \( \Phi_{x,t} := \Phi_{0}(x)\circ \tilde{\Phi}_{x,t} \) is the required extension, completing the proof.

\subsection{Floer cohomology groups for contact-at-infinity Hamiltonian systems}
\label{sec:floer-cohom-groups}

In this section we define the Floer cohomology groups associated to contact-at-infinity Hamiltonian systems.

\subsubsection{Almost complex structures}
\label{sec:almost-compl-struct}

Let $\mathscr{J}$ be the space of almost complex structures which are $\omega$-tame and equivariant with respect to the Liouville flow, outside of a compact set; see \cite[\S2.1.3]{brocic_cant} and \cite[\S2.1.2]{cant_sh_barcode} for further discussion.

\subsubsection{On the role of discriminant points}
\label{sec:role-discr-points}

In order to define Floer cohomology of a Hamiltonian system $\psi_{t}$, it is necessary that $\psi_{1}$ has non-degenerate fixed points. If $\psi_{1}$ is equivariant with respect to the Liouville flow, outside of a compact set $K$, then every fixed point of $\psi_{1}$ which lies outside of $K$ automatically occurs in a one-parameter family (by translation via the Liouville flow). For this reason, we always assume that $\psi_{1}$ has no fixed points outside of a compact set. Interestingly enough, this condition is equivalent to the ideal restriction not having any \emph{discriminant points}; see \cite[\S1.1.2]{cant_sh_barcode}.

\subsubsection{Definition of the Floer cohomology group}
\label{sec:defin-floer-cohom}

Let us say that a pair $(\psi_{t},J_{t})$ with $\psi_{t}\in \mathrm{Ham}(W)$ and $J_{t}\in \mathscr{J}$ is \emph{admissible} provided the following conditions hold:
\begin{enumerate}
\item $J_{t+1}(w)=\d\psi_{1}^{-1} J_{t}(\psi_{1}(w))\d\psi_{1}$, i.e., $J_{t}$ is \emph{twisted periodic},
\item the ideal restriction of $\psi_{1}$ has no discriminant points,
\item all fixed points of $\psi_{1}$ are non-degenerate, and
\item the moduli space $\mathscr{M}(\psi_{t},J_{t})$ of finite-energy twisted holomorphic cylinders:
  \begin{equation*}
    \left\{
      \begin{aligned}
        &w:\C\to W,\\
        &\bd_{s}w+J_{t}(w)\bd_{t}w=0,\\
        &\psi_{1}(w(s,t+1))=w(s,t),
      \end{aligned}
    \right.
  \end{equation*}
  is cut transversally (in the usual Floer theoretic sense that the linearized operator is surjective at all solutions). Let us denote by $\mathfrak{A}^{\times}$ the space of admissible data, with $\mathfrak{A}$ referring to all choices of data satisfying (i).
\end{enumerate}

Here the \emph{energy} is the symplectic area of $w$ restricted to $\R\times [0,1]$.

See \cite[\S3]{dostoglou_salamon} for consideration of similar twisted holomorphic cylinders.

If $(\psi_{t},J_{t})\in \mathfrak{A}^{\times}$ then we define $\mathrm{CF}(\psi_{t},J_{t})$ to be the vector space over $\Z/2$ generated by the (finitely many) fixed points of $\psi_{1}$. The differential on $\mathrm{CF}(\psi_{t},J_{t})$ is defined, as usual, by counting elements in $\mathscr{M}_{1}(\psi_{t},J_{t})/\R$, where $\mathscr{M}_{1}$ denotes the one-dimensional component; see \cite[\S2.1.4]{cant_sh_barcode} for further discussion.

One should note that the choice of system $\psi_{t}$ enables us to canonically associate the transformation $u(s,t)=\psi_{t}(w(s,t))$ which satisfies: $$u(s,t+1)=\psi_{t+1}(w(s,t+1))=\psi_{t}(w(s,t))=u(s,t),$$ i.e., $u$ is defined on the cylinder $\R\times \R/\Z$. A standard computation shows that $u$ solves the normal Floer's equation with $X_{t}$ defined by $X_{t}\circ \psi_{t}=\frac{d}{dt}\psi_{t}$, with respect to some periodic $\bar{J}:\R/\Z\to \mathscr{J}$; indeed:
\begin{equation}\label{eq:bar-J-eq}
  \bar{J}_{t}(u)=\d\psi_{t}J_{t}(\psi_{t}^{-1}(u))\d\psi_{t}^{-1};
\end{equation}
see \cite[\S2.2]{cant_sh_barcode} for further details.

In this fashion, we see that the differential defined using the twisted holomorphic curves in $\mathscr{M}(\psi_{t},J_{t})$ is equivalent to one defined by counting solutions to Floer's equation on $\R\times \R/\Z$; see Figure \ref{fig:diff}.

\begin{figure}[H]
  \centering
  \begin{tikzpicture}
    \draw (0.25,0) arc (0:360:0.25 and 0.5) coordinate[pos=0.25](X)coordinate[pos=0.75](Y) node[pos=0.5,left]{output};
    \draw (8.25,0) arc (0:360:0.25 and 0.5) coordinate[pos=0.25](A)coordinate[pos=0.75](B) node[right]{input};
    \draw (X)--(A) (Y)--(B);
    \path (X)--(B) node[pos=0.5]{$\bd_{s}u+\bar{J}_{t}(u)(\bd_{t}u-X_{t}(u))=0$};
  \end{tikzpicture}
  \caption{Differential is defined by counting solutions to Floer's equation on the cylinder.}
  \label{fig:diff}
\end{figure}

The homology of $\mathrm{CF}(\psi_{t},J_{t})$ is denoted by $\mathrm{HF}(\psi_{t},J_{t})$, and is called the \emph{Floer cohomology} of the data $(\psi_{t},J_{t})$. By construction, it depends only on $(\psi_{1},J_{t})$, a fact which can be encoded in terms of \emph{naturality transformations}; see \S\ref{sec:natur-transf}.

Typically one defines decorations on $\mathrm{HF}(\psi_{t},J_{t})$ (e.g., action filtrations, the distinguished subcomplex generated by contractible orbits, gradings, etc) and these decorations a priori depend on the choice of system $\psi_{t}$.

\subsubsection{Continuation maps}
\label{sec:continuation-maps}

Let us call a path $(\psi_{s,t},J_{s,t})\in \mathfrak{A}$, for $s\in \R$, \emph{continuation data} provided:
\begin{enumerate}
\item the ideal restriction of $\psi_{s,1}$ is non-negative,
\item $\psi_{s,t},J_{s,t}$ are $s$-independent for $s$ outside of a compact interval $[s_{0},s_{1}]$,
\item $\psi_{s_{0},t},J_{s_{0},t}\in \mathfrak{A}^{\times}$ and $\psi_{s_{1},t},J_{s_{1},t}\in \mathfrak{A}^{\times}$, and,
\item $\psi_{s,t}=\psi_{s,1}$ holds for $t\in (2/3,1]$ and $\psi_{s,t}=\id$ holds for $t\in [0,1/3)$.
\end{enumerate}
For such data, we will define a continuation map $\mathrm{CF}(\psi_{s_{0},t},J_{s_{0},t})\to \mathrm{CF}(\psi_{s_{1},t},J_{s_{1},t})$. Condition (iv) can be achieved, without loss of generality, by a time reparametrization. We also extend $\psi_{s,t}$ to all $t\in \R$ by requiring $\psi_{s,t+1}=\psi_{s,t}\psi_{s,1}$.

Write $Y_{s,t}$ and $X_{s,t}$ for the infinitesimal generators of $\psi_{-s,t}$, i.e.,
\begin{equation*}
Y_{s,t}\circ \psi_{-s,t}=\bd_{s}\psi_{-s,t}\text{ and }X_{s,t}\circ \psi_{-s,t}=\bd_{t}\psi_{-s,t},
\end{equation*}
and observe that $Y_{s,t}=Y_{s,1}$ holds for $t\in (2/3,1]$, and $Y_{s,t}=0$ for $t\in [0,1/3)$, while $X_{s,t}=0$ for $t\in [0,1/3)\cup (2/3,1]$.

As in \cite[\S2.2.4]{cant_sh_barcode}, we define $\mathscr{M}(\psi_{s,t},J_{s,t})$ to be the moduli space of finite energy solutions to:
\begin{equation*}
  \left\{
    \begin{aligned}
      &u:\R\times \R/\Z\to W,\\
      &(\bd_{s}u-\rho(t)Y_{s,t})+\bar{J}_{-s,t}(u)(\bd_{t}w-X_{s,t})=0,
    \end{aligned}
  \right.
\end{equation*}
where $\rho(t)$ is a smooth cut off function so that $\rho(t)=1$ for $t\le 2/3$ and $\rho(t)=0$ for $t\ge 1$, and $\bar{J}_{s,t}(u)=\d\psi_{s,t}J_{s,t}(\psi_{s,t}^{-1}(u))\d\psi_{s,t}^{-1}$ as in \eqref{eq:bar-J-eq}.

It is important to note that, since $Y_{s,t}=0$ and $\bd_{s}X_{s,t}=\bd_{s}J_{s,t}=0$ holds whenever $s\not\in (s_{0},s_{1})$, solutions to $\mathscr{M}(\psi_{s,t},J_{s,t})$ are asymptotic to solutions of the $s$-independent Floer's equation at its ends. We put the minus sign in $-s$ so that $u$ is asymptotic at its right end to an orbit of the system $\psi_{s_{0},t}$, because the right end is supposed to be the input to the continuation morphism.

Let us say that continuation data $(\psi_{s,t},J_{s,t})$ is \emph{admissible} continuation data provided the moduli space $\mathscr{M}(\psi_{s,t},J_{s,t})$ is cut transversally. Admissibility can be achieved by perturbing $\psi_{s,t}$, $J_{s,t}$ where $s\in (s_{0},s_{1})$ and $t\in (1/3,2/3)$, without changing the ideal restriction.

Similarly to \cite[\S2.2.4]{cant_sh_barcode}, the non-negativity assumption implies an a priori energy bound, which is used to establish compactness-up-to-breaking of the relevant moduli spaces; see \S\ref{sec:energy-estim-ator}. The result is that one obtains a chain map:
\begin{equation*}
  \mathfrak{c}:\mathrm{CF}(\psi_{s_{0},t},J_{s_{0},t})\to \mathrm{CF}(\psi_{s_{1},t},J_{s_{1},t}).
\end{equation*}
Standard arguments imply the chain homotopy class of $\mathfrak{c}$ depends only on the homotopy class of $\psi_{s,1}$ in the space of non-negative paths with fixed endpoints; see \cite[\S2.2.8]{cant_sh_barcode} and \cite[Lemma 6.13]{abouzaid_monograph}.

\subsubsection{Energy estimates and the atoroidal condition}
\label{sec:energy-estim-ator}

Let $(\psi_{s,t},J_{s,t})$ be admissible continuation data, so that $\psi_{s,1}$ is a non-negative path. The goal in this section is to prove elements in $\mathscr{M}(\psi_{s,t},J_{s,t})$ satisfy an a priori energy bound.

Given $u\in \mathscr{M}(\psi_{s,t},J_{s,t})$. The energy of $u$ is defined to be the quantity
\begin{equation*}
  E(u)=\int_{\R\times \R/\Z}\omega(\bd_{s}u-\rho(t)Y_{s,t}(u),\bd_{t}u-X_{s,t}(u))\d s\d t;
\end{equation*}
because $\bar{J}_{s,t}$ is $\omega$-tame, the integrand is everywhere non-negative.

\begin{lemma}
  There exists a constant $C=C(\psi_{s,t},J_{s,t})$ so that $E(u)\le C$ holds for all solutions $u\in \mathscr{M}(\psi_{s,t},J_{s,t})$; moreover the constant $C$ can be taken continuous with respect to compactly supported perturbations of $\psi_{s,t},J_{s,t}$.
\end{lemma}
\begin{proof}
  Let $H_{s,t}$ and $K_{s,t}$ be the normalized\footnote{We say that $H\in C^{\infty}(W)$ is normalized if $H$ is eventually one-homogeneous in a distinguished connected component of the ideal boundary; this depends on an auxiliary choice of distinguished component, which we fix once and for all. Contact-at-infinity systems can always be generated by time-dependent normalized Hamiltonians. Normalized functions which are constant vanish identically, since we assume $W$ is connected.} generators for $X_{s,t},Y_{s,t}$. A direct computation shows that:
  \begin{equation*}
    \begin{aligned}
      E(u)&=\int u^{*}\omega+\int \rho(t)\d K_{s,t}(\bd_{t}u)-\d H_{s,t}(\bd_{s}u)+\rho(t)\omega(Y_{s,t},X_{s,t})\d s\d t\\
          &=\int u^{*}\omega+\int_{\R/\Z} H_{-,t}(\gamma_{-})\d t-\int_{\R/\Z} H_{+,t}(\gamma_{+})\d t-\int \rho'(t)K_{s,t}(u)\d s\d t+r,
    \end{aligned}
  \end{equation*}
  where $H_{s,t}=H_{\pm,t}$ holds for $\pm s$ sufficiently large, $\gamma_{\pm}$ are the asymptotics orbits of $u$, and:
  \begin{equation*}
    r=\int\bd_{s}H_{s,t}-\rho(t)\bd_{t}K_{s,t}+\rho(t)\omega(Y_{s,t},X_{s,t})\d s\d t
  \end{equation*}
  should be considered as a sort of contribution to the ``curvature term'' (in the sense of Hamiltonian connections; see \cite[\S8]{mcduffsalamon}). It is a fact that:
  \begin{equation}\label{eq:curvature-no-rho}
    \bd_{s}H_{s,t}-\bd_{t}K_{s,t}+\omega(Y_{s,t},X_{s,t})=0
  \end{equation}
  holds pointwise; indeed, this can be proved by differentiating $f\circ \psi_{-s,t}$ with respect to $\bd_{s}\bd_{t}$ and $\bd_{t}\bd_{s}$ and using Clairaut's theorem on the equality of mixed partial derivatives. One shows that, for $s,t$ fixed, and where $F$ is the Hamiltonian vector field for $f$, that:
  \begin{equation*}
    F\intprod \d(\bd_{s}H-\bd_{t}K)=Y\intprod\d(X\intprod \d f)-X\intprod \d(Y\intprod \d f)=F\intprod \d\omega(X,Y),
  \end{equation*}
  and hence $\bd_{s}H_{s,t}-\bd_{t}K_{s,t}-\omega(Y_{s,t},X_{s,t})$ has vanishing derivative (for $s,t$ fixed). This constant is in fact zero, because we suppose $H,K$ are the normalized generators.

  Since $X_{s,t}=0$ and $Y_{s,t}=Y_{s,1}$ (and hence $K_{s,t}=K_{s,1}$) holds for $t\ge 2/3$, and $\rho(t)=1$ for $t\le 2/3$, it follows from \eqref{eq:curvature-no-rho} that $r=0$. Consequently:
  \begin{equation*}
    E(u)=\int u^{*}\omega+\int_{\R/\Z} H_{-,t}(\gamma_{-})\d t-\int_{\R/\Z} H_{+,t}(\gamma_{+})\d t-\int \rho'(t)K_{s,1}(u)\d s\d t,
  \end{equation*}
  where we use that $\rho'(t)K_{s,t}=\rho'(t)K_{s,1}$. Each term in this formula can be bounded independently of $u$. First, the symplectic area of the cylinder is bounded because of the symplectically atoroidal condition. Second, the integrals of $H_{\pm,t}$ are bounded in terms of the maximums of $H_{\pm}$ on any compact set which contains all asymptotic orbits (bearing in mind there are no orbits at infinity). Finally, the generator $K_{s,1}$ is \emph{non-positive} outside of a compact set, since it is the generator for $\psi_{-s,1}$, and hence the final term can be bounded by the (finite) maximum of $\rho'(t)K_{s,1}$. This completes the proof of the a priori energy bound.  
\end{proof}

A priori estimates on $C^{0}$ and $C^{1}$ are discussed in \S\ref{sec:energy-estim-other}; see also \cite{brocic_cant,cant_sh_barcode}.

\subsubsection{Naturality transformations}
\label{sec:natur-transf}

Let $\varphi_{t}$ be a loop in $\mathrm{Ham}(W)$ based at the identity. The goal in this section is to construct the naturality isomorphism:
\begin{equation*}
  \mathfrak{n}:\mathrm{HF}(\psi_{t})\to \mathrm{HF}(\varphi_{t}\psi_{t}).
\end{equation*}
We refer the reader to \cite[\S4]{seidel_representation} and \cite[\S2.7]{uljarevic_floer_homology_domains} for related discussion.

By definition, the map $\mathfrak{n}$ acts identically on $\mathrm{CF}(\psi_{t})=\mathrm{CF}(\varphi_{t}\psi_{t})$. Recalling that the differential counts solutions to the equation in \S\ref{sec:defin-floer-cohom}, we similarly conclude that the differentials are the same for $\mathrm{CF}(\psi_{t})=\mathrm{CF}(\varphi_{t}\psi_{t})$, and hence $\mathfrak{n}$ is a chain isomorphism.

Similarly, if $\psi_{s,t}$ is any path of systems to that $s\mapsto \psi_{s,1}$ is non-negative, then the continuation map associated to $\varphi_{t}\circ \psi_{s,t}$ from $\mathrm{CF}(\varphi_{t}\psi_{0,t})$ to $\mathrm{CF}(\varphi_{t}\psi_{1,t})$ commutes with the continuation map $\mathrm{CF}(\psi_{0,t})\to \mathrm{CF}(\psi_{1,t})$ with respect to the naturality transformations; indeed, as above, the moduli spaces used are literally the same, since they depend only on the restriction to $t=1$.

\subsubsection{Digression on the definition of a colimit}
\label{sec:defin-colim}

We digress for a moment on the category theoretic definition of colimit. If $F:A\to B$ is a functor, consider the data of (i) an object $b\in B$ and (ii) a morphism $s_{a}:F(a)\to b$ for each $a\in A$, so that the following triangle commutes for all $\mu:a\to a'$:
\begin{equation*}
  \begin{tikzpicture}[xscale=1.25]
    \path (150:1)node(A){$F(a)$}--(30:1)node(B){$F(a')$}--(270:1)node(C){$b$};
    \draw[->] (A)--node[above,scale=0.75]{$F(\mu)$}(B);
    \draw[->] (A)--node[left,scale=0.75]{$s_{a}$}(C);
    \draw[->] (B)--node[right,scale=0.75]{$s_{a'}$}(C);
  \end{tikzpicture}
\end{equation*}
Given two such data $(b,s)$ and $(b',s')$, one considers morphisms $b\to b'$ so that the following triangle commutes for all objects $a\in A$:
\begin{equation*}
  \begin{tikzpicture}[xscale=1.25]
    \path (210:1)node(A){$b$}--(-30:1)node(B){$b'$}--(90:1)node(C){$F(a)$};
    \draw[->] (A)--(B);
    \draw[->] (C)--node[left,scale=0.75]{$s_{a}$}(A);
    \draw[->] (C)--node[right,scale=0.75]{$s_{a}'$}(B);
  \end{tikzpicture}
\end{equation*}
This defines an auxiliary category associated $F$ whose objects are data $(b,s)$ and whose morphisms are as above; this category is an example of a \emph{comma category}; see \cite{maclean-cat-working-math}.

If this auxiliary category has an initial object then we say the object is the \emph{colimit} of $F$. This is the universal property for the colimit. See \cite[\S I.5.1 and \S VIII.1.4]{aluffi} for more details. If such an object always exists for any choice of functor $F$, \( B \) is called \emph{cocomplete}.

It is a standard exercise in category theory to show that the category of vector spaces is cocomplete.

\subsubsection{Cofinal functors}
\label{sec:cofinal-sequences}

Let $B$ be a cocomplete category, and let $F:A\to B$ be a functor with colimit $\mathrm{colim}(F)$. Let \( C \) be small, and $\mathfrak{N}:C\to A$ be a functor; typically we will take $C=\mathbb{N}$ to be the category with a single morphism $n\to m$ whenever $n\le m$ (and zero morphisms otherwise).

By the universal property for the colimit, there is always a morphism:
\begin{equation}\label{eq:nat_map_colim}
  \mathrm{colim}(F \circ \mathfrak{N})\to \mathrm{colim}(F).
\end{equation}
In this section we give sufficient criteria on $\mathfrak{N}$ for \eqref{eq:nat_map_colim} to be an isomorphism. Such \( \mathfrak{N} \) are called \emph{cofinal}; see \cite[\S{IX}.3]{maclean-cat-working-math}.

Let us say that $\mathfrak{N}:C \to A$ is \emph{filtering} provided:
\begin{enumerate}
  \item[(i)] for all objects $a$, there exists some morphism $a\to \mathfrak{N}(c)$ for some $c\in C$,
  \item[(ii)] for every pair of morphisms \( f,g:a \to \mathfrak{N}(c) \) there is \( c' \in C \), \( h:c \rightarrow c' \) such that \( \mathfrak{N}(h)\circ f = \mathfrak{N}(h) \circ g \).
\end{enumerate}
The significance of this condition is that it gives a criterion for cofinality:

\begin{lemma}\label{lemma:filter-cofinal}
  Filtering functors are cofinal.
\end{lemma}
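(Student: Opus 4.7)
The strategy is to construct an explicit inverse to the natural morphism \eqref{eq:nat_map_colim} using the universal property of $\mathrm{colim}(F)$. Write $s_{a}:F(a)\to \mathrm{colim}(F)$ and $t_{c}:F(\mathfrak{N}(c))\to \mathrm{colim}(F\circ \mathfrak{N})$ for the structural morphisms of the two colimits. For each $a\in A$, invoke hypothesis (i) to select a morphism $f_{a}:a\to \mathfrak{N}(c_{a})$ and set $u_{a}:=t_{c_{a}}\circ F(f_{a})$. The plan is to show that the family $\{u_{a}\}_{a\in A}$ forms a cocone over $F$ with vertex $\mathrm{colim}(F\circ \mathfrak{N})$ whose components are independent of the choices $f_{a}$; the universal property will then produce $\phi:\mathrm{colim}(F)\to \mathrm{colim}(F\circ \mathfrak{N})$, which I expect to be the required inverse.

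The heart of the argument is the following claim: for any two morphisms $p:a\to \mathfrak{N}(c)$ and $q:a\to \mathfrak{N}(c')$ out of a common source $a$, one has $t_{c}\circ F(p)=t_{c'}\circ F(q)$. Granting this, both the independence of $u_{a}$ from the choice of $f_{a}$ and the cocone identity $u_{a'}\circ F(\mu)=u_{a}$ for any $\mu:a\to a'$ follow immediately, since in the latter case the morphisms $p=f_{a'}\circ \mu$ and $q=f_{a}$ share the source $a$. To prove the claim, first promote $p$ and $q$ to a common codomain $\mathfrak{N}(c^{*})$ by post-composition with morphisms of the form $\mathfrak{N}(\alpha)$ and $\mathfrak{N}(\beta)$---in the prototypical case $C=\mathbb{N}$ this amounts simply to taking $c^{*}$ to be the larger of $c$ and $c'$ and inserting the structural morphisms of $\mathfrak{N}$. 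Denote the resulting parallel pair by $\tilde p,\tilde q:a\to \mathfrak{N}(c^{*})$, apply hypothesis (ii) to obtain $h:c^{*}\to c^{**}$ with $\mathfrak{N}(h)\tilde p=\mathfrak{N}(h)\tilde q$, then apply $t_{c^{**}}\circ F(-)$ and collapse using the cocone relations $t_{c^{*}}=t_{c^{**}}\circ F(\mathfrak{N}(h))$, $t_{c}=t_{c^{*}}\circ F(\mathfrak{N}(\alpha))$, and $t_{c'}=t_{c^{*}}\circ F(\mathfrak{N}(\beta))$.

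Once $\phi$ is constructed, both composites of $\phi$ with \eqref{eq:nat_map_colim} are morphisms from a colimit to itself which restrict correctly to the respective structural maps, so the uniqueness clause of the universal property forces each composite to equal the relevant identity morphism. The main obstacle is the bringing-to-a-common-codomain step in the key claim above: this is the one place where the joint effect of (i) and (ii) is essential, since hypothesis (ii) alone only addresses parallel pairs. Once a common codomain has been arranged, (ii) handles the coequalization routinely and the remaining verifications reduce to formal diagram-chases using the universal properties of the two colimits.
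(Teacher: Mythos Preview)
Your proof is correct and follows essentially the same approach as the paper's: construct maps $F(a)\to\mathrm{colim}(F\circ\mathfrak{N})$ via property (i), use property (ii) to show independence of the choices, and then invoke the universal property to obtain the inverse to \eqref{eq:nat_map_colim}. Your write-up is in fact more careful than the paper's terse version, in particular in isolating the ``common codomain'' step (which the paper's proof glosses over and which, as you note, is immediate in the only case $C=\mathbb{N}$ actually used).
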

\begin{proof}
  The strategy is to produce an inverse to the natural map \eqref{eq:nat_map_colim} by applying the universal property to the maps yielded by the filtering property.

  By property (i), for any \( a \) there is a map \( F(g):F(a) \rightarrow F(\mathfrak{N}(c)) \) for some \( c \in C, \) and moreover by property (ii), the induced map \( F(a) \to \mathrm{colim}(F\circ \mathfrak{N}) \) is independent of the choice of map \( a\to \mathfrak{N}(c) \). We thus obtain a canonical map \( \mathrm{colim}(F) \to \mathrm{colim}(F\circ \mathfrak{N}), \) which by construction provides factorizations of the identity maps \( \mathrm{colim}(F) \to \mathrm{colim}(F), \) and \( \mathrm{colim}(F\circ \mathfrak{N})\to\mathrm{colim}(F\circ \mathfrak{N}) \). It follows that the map \eqref{eq:nat_map_colim} is an isomorphism.
\end{proof}

\subsubsection{Symplectic cohomology as a colimit}
\label{sec:sympl-cohom-as}

Consider the small category $\Delta$ whose objects are admissible data $(\psi_{t},J_{t})$ for defining the Floer complex, and whose morphisms $(\psi_{0,t},J_{0,t})\to (\psi_{1,t},J_{1,t})$ are homotopy classes of extensions $\psi_{s,t}$ so that $\psi_{s,0}=\id$ and the ideal restriction of $\psi_{s,1}$ is a non-negative path of contactomorphisms.

Define a functor from $\Delta$ into the category of $\Z/2$-vector spaces by assigning $(\psi_{t},J_{t})$ to the Floer cohomology $\mathrm{HF}(\psi_{t},J_{t})$, as defined in \S\ref{sec:defin-floer-cohom}. To each morphism (i.e., homotopy class of maps $\psi_{s,t}$), the continuation map associated to $\psi_{s,1}$ gives a morphism $\mathrm{HF}(\psi_{0,t},J_{0,t})\to \mathrm{HF}(\psi_{1,t},J_{1,t})$. By construction, and the fact continuation maps are invariant under homotopies, this prescription is functorial.

Define $\mathrm{SH}(W)$ to be the colimit of this functor $\mathrm{HF}:\Delta\to \mathrm{Vect}$; in \S\ref{sec:domin-sequ} we explain how to compute this colimit.

\subsubsection{The Floer cohomology associated to a contact isotopy}
\label{sec:floer-cohom-assoc}

Let $\zeta_{t}$ be a contact isotopy of $Y$ with $\zeta_{0}=\id$. One can consider the subcategory $\Delta(\zeta_{t})\subset \Delta$ of all systems $(\psi_{t},J_{t})$ so that the ideal restriction of $\psi_{t}$ equals $\zeta_{t}$.

Between any two objects $(\psi_{0,t},J_{0,t})$ and $(\psi_{1,t},J_{1,t})$ in $\Delta(\zeta_{t})$ there is a \emph{canonical} morphism in $\Delta$, namely, the homotopy class of an extension $\psi_{s,t}$ so that the ideal restriction of $\psi_{s,t}$ equals $\zeta_{t}$. Such an extension exists (and is unique up to homotopy through such extensions) by the Serre fibration property \S\ref{sec:fibr-sequ-sympl}.

\begin{figure}[H]
  \centering
  \begin{tikzpicture}[scale=1.5]
    \draw[line width=1pt] (0,1)--node[left]{$\psi_{0,t}$}(0,0)--node[below]{$\id$}(1,0)--node[right]{$\psi_{1,t}$}(1,1);
    \fill[pattern={Lines[angle=45]}] (0,0) rectangle (1,1);
    \path (0,1)--node[above]{$\psi_{s,1}$}+(1,0);
  \end{tikzpicture}
  \caption{A morphism between two contact-at-infinity systems is a homotopy class of extensions of $\psi_{0,t},\psi_{1,t}$ to $\psi_{s,t}$. The restriction to the top $t=1$ is required to be non-negative with respect to $s$.}
\end{figure}

Thus $\Delta(\zeta_{t})$ has an exceptionally simple category structure (exactly one morphism between any two objects).\footnote{Such a category is sometimes called an \emph{indiscrete groupoid}. The structure is also closely related to the \emph{connected simple systems} of \cite[\S{III.5.3}]{conley-book}.} We can define $\mathrm{HF}(\zeta_{t})$ as either the limit or colimit of $\mathrm{HF}(\psi_{t},J_{t})$ over $\Delta(\zeta_{t})$, and in either case $\mathrm{HF}(\zeta_{t})$ is isomorphic to any representative $\mathrm{HF}(\psi_{t},J_{t})$. This is the definition given in \cite[\S2.2.1]{cant_sh_barcode}.

One can define a category $\Delta'$ whose objects are contact systems $\zeta_{t}$ so that $\zeta_{1}$ does not have any discriminant points and whose morphisms are homotopy classes of extensions $\zeta_{s,t}$ so that $\zeta_{s,1}$ is positive, similarly to the definition of $\Delta$. By the functoriality of continuation morphisms, the assignment of $\zeta_{t}$ to $\mathrm{HF}(\zeta_{t})$ is itself a functor $\Delta'\to \mathrm{Vect}$.

To be concrete, given representatives $(\psi_{0,t},J_{0,t})$ and $(\psi_{1,t},J_{1,t})$ for $\zeta_{0,t}$ and $\zeta_{1,t}$, one lifts $\zeta_{s,t}$ to an extension $\psi_{s,t}$ using the Serre fibration property, thereby obtaining a morphism in $\Delta$. The resulting map: $$\mathrm{HF}(\psi_{0,t},J_{0,t})\to \mathrm{HF}(\psi_{1,t},J_{1,t})$$ commutes with the continuation maps in $\Delta(\zeta_{0,t})$ and $\Delta(\zeta_{1,t})$, and hence induces a map on their colimits $\mathrm{HF}(\zeta_{0,t})\to \mathrm{HF}(\zeta_{1,t})$.

Via this process we can remove the dependence on $J_{t}$ and on the precise choice of extension of $\zeta_{t}$ to $\psi_{t}$. In the rest of this paper, we will simply refer to $\mathrm{HF}(\zeta_{t})$, and sometimes $\mathrm{HF}(\psi_{t})$ when we want to emphasize a particular choice of extension of the ideal restriction $\zeta_{t}$.

It follows in a straightforward manner that the colimit of this functor $\Delta'\to \mathrm{Vect}$ is isomorphic (in a natural way) to $\mathrm{SH}(W)$, as defined in \S\ref{sec:sympl-cohom-as}.

\subsubsection{Dominating sequences}
\label{sec:domin-sequ}

We would like to compute \( \mathrm{SH}(W) \) using a smaller category than the entirety of \( \Delta', \) since this contains more objects and morphisms than are feasible to work with. We give here some criteria for this.

Let \( \mathbb{N} \) be the category with objects the natural numbers, and morphisms given by the \( \leq \) relation. Let \( \mathfrak{N}:\mathbb{N} \to \Delta' \) be a functor. Let \( \psi_{i,s,t}:\mathfrak{N}(i)\to \mathfrak{N}(i+1) \) denote (representatives) of the morphisms in the sequence. Note that \( \psi_{i,0,t} = \mathfrak{N}(i), \) and \( \psi_{i,1,t} = \mathfrak{N}(i+1). \) By convention we will suppress the parameter \( t \) when referring to the object \( \mathfrak{N}(i). \) Call \( \mathfrak{N} \) \emph{dominating} if there a $C^{\infty}$ open set $U$ of the constant paths so that $\psi_{i,s,1}\nu_{s}$ is positive for all paths $\nu_{s}$ in $U$ and for all $i$.

\begin{lemma}
  Dominating \( \mathfrak{N} \) are filtering in \( \Delta' \) and thus are cofinal.
\end{lemma}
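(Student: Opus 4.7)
The plan is to verify the two filtering axioms (i) and (ii) from \S\ref{sec:cofinal-sequences} for a dominating functor $\mathfrak{N}:\mathbb{N}\to\Delta'$; cofinality then follows from Lemma \ref{lemma:filter-cofinal}.

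For axiom (i), given an arbitrary object $a=\zeta_t$ in $\Delta'$, I would first invoke path-connectedness of $\mathrm{Cont}(Y)$ together with the Serre fibration property of \S\ref{sec:fibr-sequ-sympl} to produce a continuous (but not necessarily positive) path of contact isotopies joining $\zeta_t$ to $\mathfrak{N}(0)$. The restriction of this path to $t=1$ gives a path in $\mathrm{Cont}(Y)$ between the time-$1$ maps; after suitable reparameterization, this path can be arranged to be slowly traversed so that, viewed as a perturbation $\nu_s$, it lies in the $C^\infty$-open neighborhood $U$ of constant paths provided by the dominating hypothesis. Concatenating with the composed morphism $\mathfrak{N}(0)\to\mathfrak{N}(c)$ then produces a path whose $t=1$ restriction has the form $\psi_{i,s,1}\nu_s$, which is positive by the dominating property. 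This yields the required morphism $a\to\mathfrak{N}(c)$.

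For axiom (ii), given two morphisms $f,g:a\to\mathfrak{N}(c)$ represented by positive extensions $\alpha_{s,t}$ and $\beta_{s,t}$, I would compose each with the canonical morphism $h:c\to c'$ provided by $\mathfrak{N}$ for $c'\geq c$ chosen large. The compositions $\mathfrak{N}(h)\circ f$ and $\mathfrak{N}(h)\circ g$ are two positive paths from $a$ to $\mathfrak{N}(c')$ which agree on the final segment. To exhibit a homotopy between them through positive paths, I would linearly interpolate between $\alpha$ and $\beta$ on the initial segment while keeping the $\mathfrak{N}(h)$-tail fixed. The interpolation is not \emph{a priori} positive, but after reparameterizing so that the interpolating piece is squeezed into a short $s$-interval traversed slowly, it fits into $U$, and positivity of the total extension then follows again from the $U$-robustness of the dominating tail.

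The main obstacle will be implementing (ii) rigorously: one must check that the straight-line interpolation between $\alpha$ and $\beta$, after concentration into a small $s$-interval and measured in the $C^\infty$-topology, genuinely lies in $U$. This is a bookkeeping argument on contact Hamiltonians, but requires care because the $C^\infty$-distance between $\alpha$ and $\beta$ is a priori unbounded; one compensates by exploiting that sufficiently many applications of the dominating morphisms provide arbitrarily large positive ``slack'' along the tail, absorbing the error introduced by the interpolation.
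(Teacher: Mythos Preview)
Your strategy is on the right track, but the mechanism you propose for both (i) and (ii) has a genuine gap. The problem is the repeated claim that a path (or homotopy of paths) can be brought into the neighborhood $U$ of constant paths by being ``slowly traversed'' or ``squeezed into a short $s$-interval.'' Reparametrization does not move the endpoints: if $\nu_{s}$ connects two distinct contactomorphisms, then $\nu$ stays at fixed $C^{0}$-distance from every constant path regardless of parametrization, so it never enters $U$. Likewise, in (ii) your phrase ``arbitrarily large positive slack along the tail, absorbing the error'' suggests the tail compensates for possible negativity on the initial segment; but a morphism in $\Delta'$ requires the \emph{entire} $t=1$ restriction to be non-negative pointwise in $s$, not merely on balance, so a very positive tail cannot rescue a negative head.

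The paper's mechanism is different: rather than concentrating the defect, one \emph{distributes} it across many dominating steps. For (i), one factors $\mathfrak{N}(1)^{-1}\mu_{t}=\nu_{1,t}\cdots\nu_{k,t}$ into $k$ short isotopies, each small enough that $s\mapsto\nu_{i,s}^{-1}$ lies in $U$ (possible since any neighborhood of the identity generates the connected group); then $s\mapsto\psi_{i,s,1}\nu_{i,s}^{-1}$ is positive by the dominating hypothesis, and concatenating the resulting squares gives a morphism $\mu_{t}\to\mathfrak{N}(k+1)$. For (ii), one first picks an arbitrary homotopy $\mu^{\eta}_{s,t}$ between the two given squares (no positivity assumed), then rescales the $s$-variable by $1/k$ so that on each unit interval the path $s\mapsto\mathfrak{N}(1)^{-1}\mu^{\eta}_{(s+i-1)/k,1}$ is $C^{\infty}$-close to a constant; multiplying each piece by the $i$th dominating step and concatenating yields a homotopy through positive paths into $\mathfrak{N}(k+1)$, and a further deformation identifies its endpoints with the compositions $\mathfrak{N}(h)\circ f$ and $\mathfrak{N}(h)\circ g$. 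The rescale-and-interleave construction is the missing idea in your sketch.
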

\begin{proof}
  First we show property (i) of filtering.
  If $\mu_{t}$ is any system, then we can write $\mathfrak{N}(1)^{-1}\mu_{t}$ as a composition $\nu_{1,t}\dots\nu_{k,t}$ of systems so that: $$s\mapsto \psi_{i,s,1}\nu_{i,s}^{-1}$$ is positive for each $i=1,\dots,k$, provided $k$ is large enough; this uses the dominating condition and that any open set $U$ in the group of paths $t\mapsto \nu_{t}\in \mathrm{Cont}(Y)$ containing $\nu_{0}=\id$ generates the entire group of isotopies (a familiar property of connected topological groups; the union of $U\cup U^{2}\cup \dots $ is open and closed).

  In this way we define a family of objects \( \mathfrak{M}_{\mu}(i) := \mathfrak{N}(i) \nu_{i-1,t}^{-1}\dots\nu_{1,t} \mathfrak{N}(1)^{-1}\mu_t \), with \( \mathfrak{M}_{\mu}(1) = \mu_t \), and morphisms between them given by the squares:
  \begin{equation*}
    \xi_{i,s,t}=\psi_{i,s,t}\nu_{i,st}^{-1}\nu_{i-1,t}^{-1}\dots\nu_{1,t}^{-1}\mathfrak{N}(1)^{-1}\mu_{t} = \psi_{i,s,t}\nu_{i,st}\mathfrak{N}(i)^{-1}\mathfrak{M}_{\mu}(i),
  \end{equation*}
  for \( (s,t) \in [0,1]^{2}. \) A bit of thought reveals that: $$\xi_{i,1,t}=\xi_{i+1,0,t}=\mathfrak{M}_{\mu}(i+1),$$
  since \( \psi_{i,1,t}=\psi_{i+1,0,t}=\mathfrak{N}(i+1) \).

  Notice that $\mathfrak{M}_{\mu}(k+1)=\mathfrak{N}(k+1)$, since $\nu_{k,t}^{-1}\dots\nu_{1,t}^{-1}\mathfrak{N}(1)^{-1}\mu_{t}=\id$, by construction. The composition of the morphisms $\xi_{i,s,t}$ as $i=1,\dots,k$ is therefore a morphism from the original system $\mu_{t}$ to $\mathfrak{N}(k+1)$. It follows that property (i) holds.

  \begin{figure}[H]
    \centering
    \begin{tikzpicture}[xscale=1.4]
      \begin{scope}[rotate=-45]
        \draw[postaction={decorate,decoration={
            markings,
            mark=between positions 0.11 and 1 step 0.2 with {\arrow{>};
            }}}] (0.5,-2)coordinate(J1)--+(40:1)coordinate(J2) (J2)--+(50:1) coordinate(J3) (J3)--+(40:1)coordinate(J4) (J4)--+(50:1) coordinate(J5) (J5)--+(40:1) coordinate(J6);
        \draw[postaction={decorate,decoration={
            markings,
            mark=at position 0.55 with {\arrow{>};
            }}}] (J6)--+(50:1) coordinate(J7);

        \draw[red,postaction={decorate,decoration={
            markings,
            mark=between positions 0.11 and 1 step 0.2 with {\arrow{>};
            }}}] (0,0)coordinate(P1)--+(60:1)coordinate(P2) (P2)--+(-10:1) coordinate(P3) (P3)--+(60:1)coordinate(P4) (P4)--+(-10:1) coordinate(P5)--(J6);

        \path[every node/.style={fill,black,circle,inner sep=1pt}] (J1)node{}--(J2)node{}--(J3)node{}--(J4)node{}--(J5)node{}--(J6)node{}--(J7)node{};
        \path[every node/.style={below}] (J1)node{$\mathfrak{N}(1)$}--(J2)node{$\mathfrak{N}(2)$}--(J3)node{$\mathfrak{N}(3)$}--(J4)node{$\mathfrak{N}(4)$}--(J5)node{$\mathfrak{N}(5)$}--(J6)node{$\mathfrak{N}(6)$}--(J7)node{$\mathfrak{N}(7)$};
        \path[every node/.style={fill,black,circle,inner sep=1pt}] (P1)node{}--(P2)node{}--(P3)node{}--(P4)node{}--(P5)node{};
        \path (P1)node[below]{$\mu_{t}=\mathfrak{M}_{\mu}(1)$}--(P2)node[above]{$\mathfrak{M}_{\mu}(2)$}--(P3)node[below]{$\mathfrak{M}_{\mu}(3)$}--(P4)node[above]{$\mathfrak{M}_{\mu}(4)$}--(P5)node[right]{$\mathfrak{M}_{\mu}(5)$};
      \end{scope}
    \end{tikzpicture}
    \caption{Figure used in proof of (i) with $k=5$.}
    \label{fig:figure-property-1-dominating}
  \end{figure}

  With this established it remains to show property (ii).

  Let \( \mu_{t} \) some admissible data and, as above, let \( \psi_{1,0,t}=\mathfrak{N}(1) \). Any two morphisms from $\mu_{t}$ to $\mathfrak{N}(1)$ are represented by squares $\mu_{s,t}^{0}$ and $\mu_{s,t}^{1}$ with $\mu_{0,t}^{i}=\mu_{t}$ and $\mu_{1,t}^{i}=\mathfrak{N}(1)$, for $i=0,1$. For simplicity, we only consider morphisms to $\mathfrak{N}(1)$; the general case (two morphisms into $\mathfrak{N}(k)$) follows from the same argument.

  It is clear that there is some extension $\mu^{\eta}_{s,t}$ satisfying:
  \begin{enumerate}
  \item $\mu^{\eta}_{s,0}=\id$ for all $\eta,s$,
  \item $\mu^{\eta}_{0,t}=\mu_{t}$ and $\mu^{\eta}_{1,t}=\mathfrak{N}(1)$ for all $\eta$.
  \end{enumerate}
  The requirement that $s\mapsto \mu^{\eta}_{s,1}$ is non-negative for all $\eta$ cannot be guaranteed.

  However, if $k>0$ is sufficiently large, then we can ensure that:
  \begin{equation*}
    s\in [0,1]\mapsto \psi_{i,s,1}\mathfrak{N}(1)^{-1}\mu_{(s+i-1)/k,1}^{\eta}\text{ is positive for each $i=1,2,\dots,k$},
  \end{equation*}
  and for all $\eta$. This is because the rescaling by factor $1/k$ eventually makes $\mu^{\eta}_{(s-i-1)/k,1}$ arbitrarily close to a constant path, and the sequence is dominating.

  Consider now the family $\xi^{\eta}_{s,t}$ defined for $(\eta,s,t)\in [0,1]\times [0,k]\times [0,1]$ given by:
  \begin{equation*}
    \xi^{\eta}_{s,t}=\psi_{s-[s]+1,[s],t}\mathfrak{N}(1)^{-1}\mu_{s/k,t}^{\eta},
  \end{equation*}
  where $s\mapsto [s]$ denotes the truncation $\R\to \R/\Z$. In words, we divide $[0,k]\times [0,1]$ into $k$ squares, and on the $i$th square $\xi_{s,t}^{\eta}$ equals $\psi_{i,[s],t}\mathfrak{N}(1)^{-1}\mu^{\eta}_{s/k,t}$. The formula for $\xi_{s,t}^{\eta}$ is continuous, and can be made smooth by appropriately reparametrizing the $s$ coordinate using cut-off functions; we leave the details of this smoothing to the reader.

  One checks that:
  \begin{enumerate}
  \item $\xi_{0,t}^{\eta}=\mu^{\eta}_{0,t}=\mu_{t}$,
  \item $\xi_{k,t}^{\eta}=\mathfrak{N}(k+1)$, since $\psi_{k+1,0,t}=\mathfrak{N}(k+1)$ and $\mu^{\eta}_{1,t}=\mathfrak{N}(1)$,
  \item the restriction $\xi_{s,1}^{\eta}$ is positive for all $\eta$.
  \end{enumerate}
  Therefore, $\xi^{\eta}_{s,t}$ defines a homotopy between two representatives of morphisms $\mu_{t}\to \mathfrak{N}(k+1)$ in $\Delta$, i.e., the morphisms induced by $\xi^{0}_{s,t}$ and $\xi^{1}_{s,t}$ are the same. Note that we think of rectangles $[0,k]\times [0,1]$ as defining $k$ fold compositions of morphisms; since rectangles are considered up to homotopy when defining morphisms, we implicitly reparametrize the $s$ coordinate to have length $[0,1]$ when referring to the morphism given by $\xi^{\eta}_{s,t}$.

  Finally, consider the deformation of maps on $[-1,k]\times [0,1]$ given by:
  \begin{equation*}
    \xi^{0,\tau}_{s,t}=\left\{
      \begin{aligned}
        &\mu^{0}_{\tau\beta(s+1),t}&-1\le s\le 0&\\
        &\psi_{s-[s]+1,[s],t}\mathfrak{N}(1)^{-1}\mu^{0}_{(1-\tau)s/k+\tau,t}&0\le s\le k&,
      \end{aligned}
    \right.
  \end{equation*}
  where $\beta:\R\to [0,1]$ is an increasing cut-off function so $\beta(x)=0$ for $x\le 0$ and $\beta(x)=1$ for $x\ge 1$. The restriction to the $s$-axis is non-negative for each $\tau$, since $\mu^{0}_{s,t}$ is presumed to be non-negative. One checks that $\xi_{s,t}^{0,\tau}$ is continuous, and, when $\tau=1$ we have:
  \begin{equation*}
    \xi^{0,1}_{s,t}=\left\{
      \begin{aligned}
        &\mu^{0}_{\beta(s+1),t}&-1\le s\le 0&\\
        &\psi_{s-[s]+1,[s],t}&0\le s\le k&,
      \end{aligned}
    \right.
  \end{equation*}
  This equals to the $k+1$-fold composition of morphisms, from $\mu_{t}$ to $\mathfrak{N}(1)$ via $\mu^{0}_{s,t}$, then to $\mathfrak{N}(2)$, etc, until $\mathfrak{N}(k+1)$.

  A similar argument constructs $\xi^{1,\tau}_{s,t}$, so that $\xi^{1,\tau}_{s,t}=\xi^{1}_{s,t}$ and so $\xi^{1,1}_{s,t}$ equals a similar $k+1$-fold composition of morphisms, starting instead with $\mu^{1}_{s,t}$ from $\mu_{t}$ to $\mathfrak{N}(1)$.

  Since $\xi^{1}_{s,t}$ and $\xi^{0}_{s,t}$ define the same morphism (as shown above), it follows that the two $k+1$-fold compositions are equal, which is exactly the filtering property (ii). This completes the proof.
\end{proof}

One obvious dominating sequence is simply $R^{\alpha}_{x_{n}t}$ where $x_{n}$ is increasing with a minimal gap $x_{n+1}-x_{n}>\epsilon$, and so that $x_{n}$ is never a period of a closed $\alpha$-Reeb orbit. The morphisms are given by the interpolation $\xi_{s,t}=R_{(1-s)x_{n}t+s x_{n+1} t}^{\alpha}$.

Similarly, the sequence with \( \mathfrak{N}(1)=\psi_{t} \) any non-degenerate contactomorphism and all subsequent objects (and morphisms) given by composing with a strictly positive loop of contactomorphisms $\varphi_{t}$ is also dominating; such a sequence is considered in \S\ref{sec:the-twist-trick}.

\subsection{PSS and the unit in symplectic cohomology}
\label{sec:PSS-and-the-unit-sympl-cohom}
Let $R_{s}^{\alpha}$ be the time $s$ Reeb flow. Autonomous systems whose ideal restriction is $R_{s}^{\alpha}$ are the systems considered in \cite{ritter_TQFT}, (although with different notation).

In this section we describe the pair-of-pants product and the unit for symplectic cohomology following \cite[\S6]{ritter_TQFT}; see also \cite{schwarz-thesis,salamon99-quantum-products,seidel-eq-pop,alizadeh-atallah-cant}. The product is defined by counting solutions to Floer's equation modeled on a pair of pants surface (thrice punctured sphere) with two ends marked as inputs (positive) and one as an output (negative). Such a count defines, for slopes \( a,b > 0 \), a homomorphism:
\[ \mathrm{HF}(R^{\alpha}_{at}) \otimes \mathrm{HF}(R^{\alpha}_{bt}) \to \mathrm{HF}(R^{\alpha}_{(a+b)t}). \]
Taking colimits yields a map \( \mathrm{SH}(W) \otimes \mathrm{SH}(W) \rightarrow \mathrm{SH}(W) \), which endows \( \mathrm{SH}(W) \) with the structure of a unital ring; see \cite[Theorems 6.1, A.10, A.12, A.14]{ritter_TQFT}.

The same construction also gives \( \mathrm{HF}(R^{\alpha}_{\epsilon t}) \) the structure of a unital ring when $\epsilon$ is sufficiently small -- one uses that for $\epsilon$ sufficiently small there is a continuation isomorphism \( \mathrm{HF}(R^{\alpha}_{\epsilon t}) \simeq \mathrm{HF}(R_{2\epsilon t}^{\alpha}) \). In particular, the colimit map is a unital ring homomorphism:
\[ c:\mathrm{HF}(R^{\alpha}_{\epsilon t}) \rightarrow \mathrm{SH}(W). \]
In \cite[\S6.8, \S6.9, \S15]{ritter_TQFT} it is shown that the PSS map \( H^*(W) \rightarrow \mathrm{HF}(R^{\alpha}_{\epsilon t}) \) is a ring isomorphism where $H^*(W)$ carries the usual cup product and $\epsilon$ is sufficiently small; see \S\ref{sec:pss-comparison} further discussion of the PSS map.

Thus the induced map \( H^*(W) \rightarrow \mathrm{SH}(W) \) is a unital ring homomorphism. In particular, the unit in symplectic cohomology is the image of the unit of ordinary cohomology (\cite[\S6.8]{ritter_TQFT}).

The above can be summarized by the following:
\begin{theorem}[{\cite[Theorems 6.1, 6.6, A.14]{ritter_TQFT}}]\label{thm:sh-ring-str}
For $W$ a Liouville manifold, the symplectic cohomology, \( \mathrm{SH}(W), \) with the ``pair of pants product'' is a graded-commutative ring with unit. Moreover there is a (unital) ring homomorphism:
\begin{equation}\label{eq:SHring-hom}
  H^*(W) \cong \mathrm{HF}(R_{\epsilon t}^{\alpha}) \rightarrow \mathrm{SH}(W),
\end{equation}
where \( H^*(W) \) is equipped with the cup product.
\end{theorem}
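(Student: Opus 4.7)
The plan is to follow Ritter's approach in \cite{ritter_TQFT}, translating the construction of the pair-of-pants product at fixed slopes into our colimit description of $\mathrm{SH}(W)$. First I would fix $a,b>0$ and, assuming $R^{\alpha}_{at}$, $R^{\alpha}_{bt}$, $R^{\alpha}_{(a+b)t}$ have no discriminant points, construct the product $\mathrm{HF}(R^{\alpha}_{at})\otimes \mathrm{HF}(R^{\alpha}_{bt})\to \mathrm{HF}(R^{\alpha}_{(a+b)t})$ by counting rigid finite-energy solutions to Floer's equation on a thrice-punctured sphere $\Sigma$, with two positive and one negative cylindrical ends, where the Hamiltonian/almost complex structure data interpolates between the three contact-at-infinity systems of slopes $a$, $b$, $a+b$. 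The a priori energy bound follows from a pair-of-pants analogue of the calculation in \S\ref{sec:energy-estim-ator}: one rewrites each end via $\psi_{t}$ to get an honest map from $\Sigma$ to $W$, bounds the symplectic area term using the atoroidal assumption (applicable since $W$ is a Liouville manifold), and bounds the remaining Hamiltonian/curvature terms using that the Hamiltonians are normalized and the slopes are chosen so that the ``output'' is at least the sum of the ``inputs.''

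Next I would show that the product is compatible with continuation maps: for a non-negative homotopy of slopes and admissible data, a one-parameter gluing/cobordism argument in the space of pair-of-pants Floer solutions shows that the square relating products at different slopes commutes up to chain homotopy. This compatibility, together with the cofinality of the dominating sequence $R^{\alpha}_{x_{n}t}$ identified in \S\ref{sec:domin-sequ}, allows one to pass to the colimit and obtain a well-defined product $\mathrm{SH}(W)\otimes \mathrm{SH}(W)\to \mathrm{SH}(W)$. Associativity and graded-commutativity follow by the standard TQFT-type moduli arguments: a choice of ordering of the two inputs corresponds to a point in a contractible moduli of conformal structures on $\Sigma$, and four-punctured sphere moduli supply the associativity homotopy. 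Since these are verbatim the arguments of \cite[\S6, Appendix A]{ritter_TQFT} carried out in a setting compatible with our colimit definition, I would invoke them rather than reproduce them.

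For the remaining statements, I would take $\epsilon$ so small that (i) $R^{\alpha}_{\epsilon t}$ has no discriminant points and its only 1-periodic orbits are the constant orbits at critical points of a chosen Morse function on $W$ (obtained by a small autonomous perturbation on the compact part), and (ii) the continuation $\mathrm{HF}(R^{\alpha}_{\epsilon t})\to \mathrm{HF}(R^{\alpha}_{2\epsilon t})$ is an isomorphism since for small slopes no new 1-orbits appear in the conical end. Under this setup, the pair-of-pants product on $\mathrm{HF}(R^{\alpha}_{\epsilon t})$ is defined, and the PSS construction produces a map $H^{*}(W)\to \mathrm{HF}(R^{\alpha}_{\epsilon t})$ by counting half-plane solutions with a Morse flow line outgoing at the puncture. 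The main obstacle is showing this PSS map is a \emph{unital ring} isomorphism: this requires (a) the usual PSS/inverse-PSS chain homotopy to the identity, proven by degenerating a cylinder into a pair-of-plane configurations, and (b) a gluing argument identifying the pair-of-pants count with the Morse-theoretic cup product, which is delicate because in the small-slope limit the Floer solutions concentrate near critical points.

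Composing the PSS isomorphism with the colimit map $c:\mathrm{HF}(R^{\alpha}_{\epsilon t})\to \mathrm{SH}(W)$ (which is a ring homomorphism by the compatibility of products with continuation maps) yields the unital ring homomorphism $H^{*}(W)\to \mathrm{SH}(W)$ in \eqref{eq:SHring-hom}; unitality of $c$, i.e., that $c(1)$ is the unit of $\mathrm{SH}(W)$, follows because the unit is constructed level by level as the image of $1\in H^{*}(W)$ and is preserved by continuation. The hard part throughout is verifying that the moduli-space arguments of \cite{ritter_TQFT}, originally phrased for fixed-slope Hamiltonians, are functorial with respect to our comma-category colimit; this is straightforward but notationally heavy, and I would simply cite Ritter's theorems after verifying that our admissibility conditions produce the same colimit system.
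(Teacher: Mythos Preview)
The paper does not give its own proof of this statement: it is quoted directly from \cite[Theorems 6.1, 6.6, A.14]{ritter_TQFT} and immediately followed by \S\ref{sec:subc-gener-contr} with no intervening proof environment. Your proposal is a faithful summary of Ritter's construction and is consistent with how the paper uses the result, so there is nothing to compare against beyond noting that you have correctly identified the cited source and sketched its argument.
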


\subsubsection{Subcomplex generated by contractible orbits}
\label{sec:subc-gener-contr}

Technically \cite{ritter_TQFT} works in the context of Liouville manifolds. However, it is straightforward to show that one has the required energy bounds (without the need to use Novikov coefficients) when defining the product:
\begin{equation*}
  \mu:\mathrm{HF}_{0}(R^{\alpha}_{a t})\otimes \mathrm{HF}_{0}(R^{\alpha}_{b t})\to \mathrm{HF}_{0}(R^{\alpha}_{(a+b)t}),
\end{equation*}
for $a,b>0$ where $\mathrm{HF}_{0}$ denotes the subcomplex spanned by the contractible orbits. One uses fixed cappings of contractible orbits to obtain a priori bounds on the symplectic areas of pairs-of-pants. One only needs the aspherical condition for this a priori energy bound to hold.

In particular, we conclude the following corollary which is all that we will need from the pair-of-pants product:
\begin{cor}\label{cor:aspherical-pop}
  Let $W$ be a symplectically aspherical and convex-at-infinity manifold. Then $\mathrm{SH}_{0}(W)$ is a unital ring, and the continuation map: $$H^{*}(W)\simeq \mathrm{HF}_{0}(R^{\alpha}_{\epsilon t})\to \mathrm{SH}_{0}(W)$$ is a ring homomorphism provided $\epsilon>0$ is smaller than the minimal period of a closed $\alpha$-Reeb orbit.
\end{cor}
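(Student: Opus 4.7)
The plan is to adapt the arguments of \cite{ritter_TQFT} from the Liouville setting to the symplectically aspherical convex-at-infinity setting, with the Liouville form-based a priori energy bounds replaced by fixed capping-based bounds, and asphericity used to exclude sphere bubbling.

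First I would verify that the pair-of-pants product preserves the contractible subcomplex. The thrice-punctured sphere has three boundary loops satisfying a product relation in $\pi_{1}$; hence if two asymptotic orbits of a Floer solution $u:\Sigma \to W$ are contractible in $W$, so must be the third. Thus counting pair-of-pants solutions yields a candidate map:
\begin{equation*}
  \mu: \mathrm{CF}_{0}(R^{\alpha}_{a t})\otimes \mathrm{CF}_{0}(R^{\alpha}_{b t})\to \mathrm{CF}_{0}(R^{\alpha}_{(a+b) t}),
\end{equation*}
provided the relevant moduli spaces are compact up to breaking.

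To obtain a priori energy bounds without using the Liouville form, I would fix once and for all a disc capping $\bar{x}:D^{2}\to W$ for each contractible one-periodic orbit $x$ appearing as a generator; by asphericity, $\omega(\bar{x})$ is independent of the choice of capping, so the action $A(x) = -\omega(\bar{x}) + \int_{\R/\Z} H_{t}(x(t))\,\d t$ is well-defined. A computation of the same flavor as in \S\ref{sec:energy-estim-ator} then gives the pair-of-pants action-energy identity:
\begin{equation*}
  E(u) = A(x_{-}^{(1)}) + A(x_{-}^{(2)}) - A(x_{+}) + K,
\end{equation*}
where $K$ is a curvature-type term depending only on the chosen Hamiltonian connection on the pair-of-pants and uniformly bounded for admissible connections. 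Non-negativity of $E(u)$ yields an upper bound on $A(x_{+})$ in terms of the finitely many possible values of $A(x_{-}^{(i)})$; combined with asphericity (which forbids sphere bubbling) this gives Gromov compactness, so $\mu$ is well-defined over $\Z/2$. Associativity, graded-commutativity, and existence of a unit then follow by verbatim adaptation of \cite[\S6]{ritter_TQFT}. Taking colimits equips $\mathrm{SH}_{0}(W)$ with the desired unital ring structure.

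For the second statement, when $\epsilon$ is smaller than the minimal period of a closed $\alpha$-Reeb orbit, a suitable autonomous Hamiltonian with ideal restriction $R^{\alpha}_{\epsilon t}$ whose compact part is a small Morse function has only constant one-periodic orbits sitting at critical points, all contractible. Hence $\mathrm{HF}(R^{\alpha}_{\epsilon t}) = \mathrm{HF}_{0}(R^{\alpha}_{\epsilon t})$, and the PSS argument identifies this group with $H^{*}(W)$ carrying the cup product (see \cite[\S6.8, \S6.9, \S15]{ritter_TQFT}); the required ring-homomorphism statement then follows by combining this with the product structure established above. The main obstacle is the uniform bound on the curvature term $K$: in the Liouville setting this reduces cleanly to Stokes, while here it requires choosing a Hamiltonian connection over the pair-of-pants with appropriate growth at infinity and applying a normalization argument analogous to that of \S\ref{sec:energy-estim-ator}. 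Once this is in place, the remaining verifications reduce to standard Floer-theoretic machinery.
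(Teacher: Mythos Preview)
Your proposal is correct and follows essentially the same approach as the paper: the paper likewise notes that Ritter's arguments from \cite{ritter_TQFT} go through once one replaces Liouville-form energy bounds by bounds obtained from fixed cappings of contractible orbits, with asphericity ensuring these are well-defined and ruling out sphere bubbling. Your write-up is in fact more detailed than the paper's, which simply states that ``one uses fixed cappings of contractible orbits to obtain a priori bounds on the symplectic areas of pairs-of-pants'' and that ``one only needs the aspherical condition for this a priori energy bound to hold.''
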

In this case we should mention that $\mathrm{HF}_{0}(R^{\alpha}_{\epsilon t})\simeq \mathrm{HF}(R^{\alpha}_{\epsilon t})$ if $\epsilon$ is small enough.

\subsubsection{PSS maps and comparison with Morse cohomology}
\label{sec:pss-comparison}
Let $f$ be a Morse function on $W$ which is non-vanishing and $1$-homogeneous in the convex end (so all the critical points lie in a compact set). As in Floer cohomology, one defines Morse cohomology $\mathrm{HM}^{*}(f)$ by counting negative gradient flow lines whose input is the asymptotic at $s=+\infty$.

\begin{theorem}\label{theorem:pss-factor}
  Let $f_{\pm}$ be $1$-homogenous and positive, resp., negative, in the convex end. There is a commutative diagram:
  \begin{equation*}
    \begin{tikzcd}
      {\mathrm{HM}^{*}(f_{-})}\arrow[r,"{}"] &{\mathrm{HM}^{*}(f_{+})}\arrow[d,"{}"]\\
      {\mathrm{HF}(R^{\alpha}_{-\epsilon t})}\arrow[u,"{}"]\arrow[r,"{}"] &{\mathrm{HF}(R^{\alpha}_{\epsilon t})},
    \end{tikzcd}
  \end{equation*}
  where the vertical maps are the PSS morphisms, as in \cite{ritter_TQFT,frauenfelder_schlenk}, and the horizontal maps are induced by continuation maps.
\end{theorem}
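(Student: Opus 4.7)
The plan is to prove the commutativity of the diagram at the chain level by constructing a single one-parameter family of moduli spaces $\mathscr{M}_R$, $R\in [0,\infty)$, which at $R=0$ computes the bottom horizontal map (Floer continuation from $R^{\alpha}_{-\epsilon t}$ to $R^{\alpha}_{\epsilon t}$), and which as $R\to\infty$ degenerates into the three-piece configuration realizing the composition of the left PSS map, the Morse continuation, and the right PSS map. The one-dimensional part of the total space $\bigsqcup_R \mathscr{M}_R$ is then a cobordism whose boundary components at $R=0$ and $R=\infty$ witness a chain homotopy between the two compositions.

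Concretely, fix time-dependent admissible Hamiltonians $H^{\pm}_t$ which generate $R^{\alpha}_{\pm \epsilon t}$ outside a large compact set $K$ and which, inside $K$, are $C^2$-small multiples of the Morse functions $f_{\pm}$ (this is possible because the convex-end behaviour is decoupled from the compact interior, and $\epsilon$ is small). Choose a Morse continuation Hamiltonian $g_{s}$ interpolating from $f_-$ to $f_+$. For each $R\ge 0$, define a homotopy $H^R_{s,t}$ that equals $H^-_t$ for $s\le -R-1$, equals $H^+_t$ for $s\ge R+1$, and in the slab $s\in[-R,R]$ is the $s$-dependent Morse-type Hamiltonian built from $g_{s/(2R)}$ (so essentially a small Morse perturbation), together with a matching family of admissible almost complex structures $J^R_{s,t}$. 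Let $\mathscr{M}_R$ be the moduli space of finite-energy solutions of Floer's equation with this data, with asymptotics in $\mathrm{Fix}(R^{\alpha}_{-\epsilon t})$ at $s=-\infty$ and $\mathrm{Fix}(R^{\alpha}_{\epsilon t})$ at $s=+\infty$.

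At $R=0$, the data $H^0_{s,t}$ is an ordinary continuation homotopy, so the rigid count in $\mathscr{M}_0$ is (up to chain homotopy) the bottom horizontal map. As $R\to\infty$, a standard Floer--Gromov neck-stretching argument shows that any sequence of solutions breaks into three components: a Floer half-cylinder on $(-\infty,-R]\times S^1$ asymptotic to an orbit of $R^{\alpha}_{-\epsilon t}$ and terminating at a critical point of $f_-$ (the dual PSS ``spike''); a gradient trajectory of the $s$-dependent Morse family from a critical point of $f_-$ to a critical point of $f_+$ inside the slab; and a Floer half-cylinder on $[R,\infty)\times S^1$ starting at a critical point of $f_+$ and asymptotic to an orbit of $R^{\alpha}_{\epsilon t}$ (the PSS ``spike''). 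Counting the ends of the one-dimensional stratum of $\bigsqcup_R \mathscr{M}_R$ therefore yields the desired chain-level identity, and the diagram commutes on cohomology.

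The main obstacle is compactness of $\bigsqcup_R \mathscr{M}_R$. Two inputs are needed: a uniform a priori energy bound, which follows from the atoroidal assumption together with the normalized-generator computation of \S\ref{sec:energy-estim-ator} applied to the homotopies $H^R$ (noting that the boundary contributions depend only on the asymptotic orbits and on $\max H^R$, both of which are controlled independently of $R$); and a uniform $C^0$-estimate forbidding escape to infinity, which follows from the fact that $H^R$ is contact-at-infinity with a fixed slope profile and a standard maximum-principle/convexity argument as in \S\ref{sec:energy-estim-other}. With these in place, transversality is achieved by generic perturbation of $J^R_{s,t}$ in the slab region (away from the ends), and the standard SFT-type neck-stretching analysis identifies the $R=\infty$ boundary with the broken configurations described above, completing the proof.
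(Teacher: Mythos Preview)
Your approach is correct and is a standard route to such factorizations; it differs from the paper's argument in one structural respect. The paper does not run a single Floer-theoretic family with a small Hamiltonian in a growing middle slab. Instead it works with \emph{hybrid} Morse--Floer configurations in the style of Biran--Cornea pearls: two PSS spiked disks joined by an explicit gradient continuation line for the $s$-dependent vector field $\xi_{s}=-\nabla f_{+}+\beta(s)(\nabla f_{+}-\nabla f_{-})$ over $s\in[-\ell,\ell]$, with $\ell\in[0,\infty)$ as the parameter. At $\ell=0$ the two spikes touch at a point, and a separate (standard PSS-type) deformation identifies that configuration with the Floer continuation cylinder; as $\ell\to\infty$ the flow line breaks at an intermediate critical point and one reads off the three-fold composition directly. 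Your version keeps everything inside Floer theory and relies implicitly on the fact that Floer trajectories for a $C^{2}$-small autonomous Hamiltonian are reparametrized Morse trajectories, so that the stretched middle slab limits to a genuine Morse continuation line; this adiabatic identification is precisely the step the paper's hybrid setup bypasses by building the gradient line in from the start. Both arguments are well established; the paper's is a bit more direct for this statement, while yours stays within a single analytic framework. (A minor remark: your orientation convention, with input at $s=-\infty$, is opposite to the paper's, though this is harmless.)
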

See \cite[Proposition 1.3]{cieliebak_frauenfelder_oancea} for a related statement. Note that the left hand vertical map in the above diagram goes in the \textit{opposite} direction to the usual PSS map. This is because we work with negative slope, and must continue to slope 0 when defining the PSS map. That the above diagram commutes follows from a breaking/gluing analysis analogous to that involved in the proof that the PSS map in the closed case is an isomorphism as in \cite{pss}, and the usual compatibility with continuation; see Figure \ref{fig:PSS}. See also \cite{biran-cornea,biran_cornea_CRM,biran-cornea-rigidity-uniruling,biran_cornea_lagrangian_topology} for similar arguments considering configurations involving Morse flow lines in Floer theory.

As explained in \S\ref{sec:PSS-and-the-unit-sympl-cohom}, the vertical map on the right hand side of the above diagram is an isomorphism if $\epsilon$ is smaller than the minimal period of a closed Reeb orbit; see \cite{ritter_TQFT,frauenfelder_schlenk} for the proof. The same follows on the left hand side, by carrying out the same procedure, just reversing the spiked disks, holomorphic curves, and flow lines at all stages (taking negative slope to positive slope).

\begin{figure}[H]
  \centering
  \begin{tikzpicture}[xscale=0.8]
    \begin{scope}
      \draw (0,0) circle (0.3 and 1) coordinate[shift={(0,1)}](A) coordinate[shift={(0,-1)}](B);
      \draw (3,0) circle (0.3 and 1) coordinate[shift={(0,1)}](C) coordinate[shift={(0,-1)}](D);
      \draw (A)to[out=-20,in=200](C) (B)to[out=20,in=160](D);
    \end{scope}
    \begin{scope}[shift={(4.5,0)}]
      \draw (0,0) circle (0.3 and 1) coordinate[shift={(0,1)}](A) coordinate[shift={(0,-1)}](B);
      \draw (3,0) circle (0.3 and 1) coordinate[shift={(0,1)}](C) coordinate[shift={(0,-1)}](D);
      \coordinate (X) at (1.5,0);
      \draw (A)to[out=-10,in=90](X)to[out=-90,in=10](B) (C)to[out=190,in=90](X)to[out=-90,in=170](D);
      \node[draw,circle,inner sep=1pt,fill] at (X){};
    \end{scope}
    \begin{scope}[shift={(9,0)}]
      \draw (0,0) circle (0.3 and 1) coordinate[shift={(0,1)}](A) coordinate[shift={(0,-1)}](B);
      \draw (3,0) circle (0.3 and 1) coordinate[shift={(0,1)}](C) coordinate[shift={(0,-1)}](D);
      \path (1.0,0) coordinate (X) -- (2,0) coordinate (Y);
      \draw (A)to[out=-10,in=90](X)to[out=-90,in=10](B) (C)to[out=190,in=90](Y)to[out=-90,in=170](D);
      \draw[every node/.style={draw,circle,inner sep=1pt,fill},postaction={decorate,decoration={
          markings,
          mark=at position 0.55 with {\arrow{>[scale=1.3]};},
        }}] (X)node{}--(Y)node{};
    \end{scope}
    \begin{scope}[shift={(13.5,0)}]
      \draw (0,0) circle (0.3 and 1) coordinate[shift={(0,1)}](A) coordinate[shift={(0,-1)}](B);
      \draw (3,0) circle (0.3 and 1) coordinate[shift={(0,1)}](C) coordinate[shift={(0,-1)}](D);
      \path (1.0,0) coordinate (X) -- (1.33,0.3) coordinate (AA) -- (1.66,-0.3) coordinate(BB) -- (2,0)coordinate(Y);
      \draw (A)to[out=-10,in=90](X)to[out=-90,in=10](B) (C)to[out=190,in=90](Y)to[out=-90,in=170](D);
      \draw[every node/.style={draw,circle,inner sep=1pt,fill},postaction={decorate,decoration={
          markings,
          mark=at position 0.55 with {\arrow{>[scale=1.3]};},
        }}] (X)node{}--(AA)node{}--(BB)node{}--(Y)node{};
    \end{scope}
  \end{tikzpicture}
  \caption{The moduli spaces used to prove Theorem \ref{theorem:pss-factor}. These induce morphism $\mathrm{HF}(R_{-\epsilon t})\to \mathrm{HF}(R_{\epsilon t})$ by considering the positive puncture as the input.}
  \label{fig:PSS}
\end{figure}
\begin{figure}[H]
  \centering
  \begin{tikzpicture}[xscale=1.7]
    \draw (0,0) circle (0.1 and 1) coordinate[shift={(0,1)}](A) coordinate[shift={(0,-1)}](B) coordinate[shift={(-0.1,0)}](PS);
    \draw (4,0) circle (0.1 and 1) coordinate[shift={(0,1)}](C) coordinate[shift={(0,-1)}](D) coordinate[shift={(0.1,0)}](NS);
    \path (0.8,0) coordinate (X) -- (3.2,0) coordinate (Y);
    \draw (A)to[out=0,in=90](X)to[out=-90,in=0](B) (C)to[out=180,in=90](Y)to[out=-90,in=180](D);

    \draw[every node/.style={draw,circle,inner sep=1pt,fill},postaction={decorate,decoration={
        markings,
        mark=at position 0.55 with {\arrow{>[scale=1.3]};},
      }}] (X)node{}--coordinate[shift={(0,0.1)}](FL)(Y)node{};
    \path
    (PS)node[left]{PSS for $R_{+\epsilon}^{\alpha}$}--(FL)node[above]{$x'(s)=\xi_{s}(x(s))$}--(NS)node[right]{PSS for $R_{-\epsilon}^{\alpha}$};
  \end{tikzpicture}
  \caption{More detailed view of the third piece of Figure \ref{fig:PSS}.}
  \label{fig:PSS-more-detail}
\end{figure}

One subtlety which should be mentioned is that the flow line appearing in the third part of Figure \ref{fig:PSS} is a \emph{continuation line} interpolating from $-\nabla f_{+}$ (on the left) to $-\nabla f_{-}$ (on the right). The fourth piece in Figure \ref{fig:PSS} is a breaking interpreted as the composition: $$\mathrm{HF}(R^{\alpha}_{-\epsilon t})\to \mathrm{HM}(f_{-})\to \mathrm{HM}(f_{+})\to \mathrm{HF}(R^{\alpha}_{\epsilon t}),$$ going from right to left as per the cohomological conventions used in this paper.

More precisely, if: $$\xi_{s}=-\nabla f_{+}+\beta(s)(\nabla f_{+}-\nabla f_{-}),$$ the third part of Figure \ref{fig:PSS} has a flow line for $\xi_{s}$ over the region $s\in [-\ell,\ell]$. Here $\ell$ is considered as a parameter, so that $\ell=0$ describes the configurations shown in the second part of Figure \ref{fig:PSS}, and $\ell\to\infty$ describes the fourth part of Figure \ref{fig:PSS}; see Figure \ref{fig:PSS-more-detail} for more details.

We should remark that any flow line for $\xi_{s}$ over $[-\ell,\ell]$ whose endpoints remains in a compact set $K$ will remain in some other compact set $K'$, independently of how large $\ell$ is. This is not the case for flow lines of, e.g., $-\xi_{s}$.

\subsubsection{When is the unit born?}
\label{sec:when-unit-born}

As a consequence of Theorem \ref{theorem:pss-factor} and the discussion in \S\ref{sec:pss-comparison}, the image of the continuation map $\mathrm{HF}(R_{-\epsilon t}^{\alpha})\to \mathrm{HF}(R_{\epsilon t}^{\alpha})$ consists entirely of nilpotent elements with respect to the pair-of-pants product. Indeed, this holds because $\mathrm{HM}(f_{+})\to \mathrm{HF}(R_{\epsilon t}^{\alpha})$ is a ring homomorphism and the image of $\mathrm{HM}(f_{-})\to \mathrm{HM}(f_{+})$ consists of nilpotent elements, by degree considerations.

In particular, if $\mathrm{SH}_{0}(W)\ne 0$, then the map $\mathrm{HF}(R^{\alpha}_{-\epsilon t})\to \mathrm{SH}_{0}(W)$ does \emph{not} hit $1$ (because $1\ne 0$ is not nilpotent). One implicitly applies Corollary \ref{cor:aspherical-pop}.

Referring to the barcode associated to the persistence module $V_{s}=\mathrm{HF}(R^{\alpha}_{st})$, there is a half-infinite bar corresponding to $1$ which is born at parameter $s=0$; see \cite[\S2.5]{cant_sh_barcode} for related discussion.

This observation that $\mathrm{HF}(R^{\alpha}_{-\epsilon t})\to \mathrm{SH}(W)$ is not surjective when the unit is nonzero is crucial in the proof of Theorem \ref{theorem:main-absolute}. Indeed, we have:
\begin{lemma}\label{lemma:non-surjective-lemma}
  The continuation morphism $\mathrm{HF}(R^{\alpha}_{-\epsilon t})\to \mathrm{SH}(W)$ is not surjective if $\mathrm{SH}(W)\ne 0$.
\end{lemma}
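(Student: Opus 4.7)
The plan is to show that the image of the continuation map $c\colon \mathrm{HF}(R^{\alpha}_{-\epsilon t})\to \mathrm{SH}(W)$ consists of nilpotent elements of the pair-of-pants ring, and therefore cannot contain the unit whenever the unit is nonzero.

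First I would note that, for $\epsilon$ smaller than the minimal period of a closed $\alpha$-Reeb orbit, the $1$-periodic orbits of $R^{\alpha}_{-\epsilon t}$ are all $C^{0}$-close to constant orbits and hence contractible. Consequently $c$ factors through the subspace $\mathrm{SH}_{0}(W)$ of classes represented by contractible orbits. If $\mathrm{SH}_{0}(W)=0$, then $c$ is the zero map, so surjectivity onto the nonzero group $\mathrm{SH}(W)$ fails trivially.

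It therefore suffices to handle the case $\mathrm{SH}_{0}(W)\ne 0$. The atoroidal hypothesis implies the aspherical one (any sphere factors through a torus via a degree-one collapse $T^{2}\to S^{2}$), so Corollary \ref{cor:aspherical-pop} applies and endows $\mathrm{SH}_{0}(W)$ with a unital ring structure. Since a ring in which $1=0$ is trivial, the unit $1\in \mathrm{SH}_{0}(W)$ is nonzero, and because $1^{n}=1$ for all $n$, it is not nilpotent. I would then factor
\begin{equation*}
  c\colon \mathrm{HF}(R^{\alpha}_{-\epsilon t})\xrightarrow{c_{1}}\mathrm{HF}(R^{\alpha}_{\epsilon t})\xrightarrow{c_{2}}\mathrm{SH}_{0}(W),
\end{equation*}
where $c_{1}$ is the continuation induced by the positive path $s\in[-\epsilon,\epsilon]\mapsto R^{\alpha}_{st}$ and $c_{2}$ is the canonical colimit map. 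The equality $c=c_{2}\circ c_{1}$ is immediate from the universal property of the colimit defining $\mathrm{SH}$ (\S\ref{sec:sympl-cohom-as}), and $c_{2}$ is a unital ring homomorphism by the discussion of \S\ref{sec:PSS-and-the-unit-sympl-cohom} together with Corollary \ref{cor:aspherical-pop}.

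The crucial input, already established in \S\ref{sec:when-unit-born} on the basis of Theorem \ref{theorem:pss-factor} and the PSS comparison with Morse cohomology in \S\ref{sec:pss-comparison}, is that $\mathrm{image}(c_{1})$ consists of nilpotent elements of the pair-of-pants ring $\mathrm{HF}(R^{\alpha}_{\epsilon t})$. Because $c_{2}$ is a ring homomorphism, nilpotents map to nilpotents, so $\mathrm{image}(c)=c_{2}(\mathrm{image}(c_{1}))$ is contained in the nilpotents of $\mathrm{SH}_{0}(W)$; in particular it omits $1$, and $c$ is not surjective. The only point that requires care is the compatibility of the two factorizations (through $\mathrm{SH}_{0}$, and through $\mathrm{HF}(R^{\alpha}_{\epsilon t})$) with the colimit construction, but this follows directly from the functoriality built into \S\ref{sec:sympl-cohom-as}--\S\ref{sec:floer-cohom-assoc}, so I do not foresee a genuine obstacle.
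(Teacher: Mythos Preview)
Your proposal is correct and follows essentially the same route as the paper's proof: both arguments observe that $\mathrm{HF}(R^{\alpha}_{-\epsilon t})$ is supported on contractible orbits, and then invoke the nilpotency of the image of $c_{1}$ (established in \S\ref{sec:when-unit-born} via the PSS factorization of Theorem \ref{theorem:pss-factor}) together with the unital ring structure of Corollary \ref{cor:aspherical-pop} to conclude that the unit is missed. The only cosmetic difference is the case split: the paper divides into $\mathrm{SH}_{0}(W)\ne \mathrm{SH}(W)$ versus $\mathrm{SH}_{0}(W)=\mathrm{SH}(W)$, whereas you divide into $\mathrm{SH}_{0}(W)=0$ versus $\mathrm{SH}_{0}(W)\ne 0$; both cover the same ground.
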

\begin{proof}
  If $\mathrm{SH}_{0}(W)\ne \mathrm{SH}(W)$, then $\mathrm{HF}(R^{\alpha}_{-\epsilon t})\to \mathrm{SH}(W)$ is not surjective by consideration of the free homotopy classes of orbits (since $\mathrm{HF}(R^{\alpha}_{-\epsilon t})=\mathrm{HF}_{0}(R^{\alpha}_{-\epsilon t})$ for $\epsilon$ small enough). If $\mathrm{SH}_{0}(W)=\mathrm{SH}(W)$, then we can use the fact that $\mathrm{SH}_{0}(W)\ne 0$ if and only if the unit is non-zero. The preceding ``nilpotency'' argument shows that a non-zero unit is never in the image of $\mathrm{HF}(R^{\alpha}_{-\epsilon t})\to \mathrm{SH}(W)$. This completes the proof.
\end{proof}

\subsection{The twisting trick}
\label{sec:the-twist-trick}

We follow the strategy of \cite{uljarevic_floer_homology_domains,merry_ulja}; see also \cite{ritter_negative_line_bundles} and \cite[Theorem 2.3]{ritter_circle_actions}.

Let $\varphi_{t}$ be a loop in $\mathrm{Ham}(W)$ based at $1$ whose ideal restriction is positive (i.e., an extensible positive loop). Fix $\psi_{t}$ to be any admissible contact-at-infinity system.

Define:
\begin{equation*}
  \psi^{k}_{t}:=\varphi_{t}\circ \dots \circ \varphi_{t}\circ \psi_{t},
\end{equation*}
and also define $\mathfrak{c}_{k}$ to be the continuation map $\mathrm{HF}(\psi^{k}_{t})\to \mathrm{HF}(\psi^{k+1}_{t})$ given by the path $\mu_{s,t}=\varphi_{st}\psi^{k}_{t}$. As discussed in \ref{sec:domin-sequ}, the sequence $\psi^{k}_{t}$ is dominating, and thus cofinal. Using this sequence $\psi^{k}_{t}$, we will show that $\mathrm{SH}(W)=0$.

Applying the naturality isomorphism $\mathfrak{n}:\mathrm{HF}(\psi_{t}^{0})\to \mathrm{HF}(\psi_{t}^{k})$, one concludes that the all of the vector spaces $\mathrm{HF}(\psi^{k}_{t})$ have the same dimension, say $d$. It follows easily that $\dim \mathrm{SH}(W)\le d$. Moreover, because naturality maps commute with continuation maps (as in \S\ref{sec:natur-transf}), the following diagram is commutative:
\begin{equation*}\begin{tikzcd}
    {\mathrm{HF}(\psi_{t}^{0})}\arrow[d,"{\mathfrak{n}}"]\arrow[r,"{\mathfrak{c}}"] &{\mathrm{SH}(W)}\arrow[d,"{\id}"]\\
    {\mathrm{HF}(\psi_{t}^{k})}\arrow[r,"{\mathfrak{c}}"] &{\mathrm{SH}(W)}.
  \end{tikzcd}
\end{equation*}
By taking $k$ sufficiently large, we can ensure that the lower continuation map is surjective (becuase the dimension of $\mathrm{SH}(W)$ is finite), and thereby conclude that the upper continuation map is also surjective, as desired. However, if $\mathrm{SH}(W)\ne 0$, then we have seen in Lemma \ref{lemma:non-surjective-lemma} that, if $\psi^{0}_{t}=R^{\alpha}_{-\epsilon t}$, then the natural map $\mathrm{HF}(\psi^{0}_{t})\to \mathrm{SH}(W)$ is \emph{not} surjective. Thus we conclude Theorem~\ref{theorem:main-absolute}.

\subsection{Stabilizations and non-negative extensible loops}
\label{sec:stab-liouv-manif}
Every stabilization admits a non-negative extensible loop:
\begin{lemma}
  The loop of Hamiltonian diffeomorphisms:
  \begin{equation*}
    \Phi_t: W'\rightarrow W'\text{ given by }(p,z)\mapsto (p, e^{2\pi i t}z).
  \end{equation*}
  is contact-at-infinity and has a non-negative ideal restriction.
\end{lemma}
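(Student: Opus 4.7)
The plan is to verify three points directly: $\Phi_{t}$ is Hamiltonian, it commutes with the Liouville flow (hence is contact-at-infinity), and the induced contact Hamiltonian at infinity is non-negative.

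First I would compute the infinitesimal generator, which is the vector field $X_{\Phi}(p,x,y)=2\pi(-y\,\bd_{x}+x\,\bd_{y})$ tangent to the $\C$-factor. A direct check using $\omega'=\omega+\d x\wedge \d y$ shows that $X_{\Phi}$ is the Hamiltonian vector field of $\pm\pi|z|^{2}$ (the sign depending on the convention), so $\Phi_{t}$ is a smooth Hamiltonian loop with $\Phi_{0}=\Phi_{1}=\id$ and with generator $\omega'$-dual to an exact $1$-form.

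Next I would verify equivariance. The Liouville vector field of $\lambda'$ is $Z'=Z_{W}+\tfrac{1}{2}(x\,\bd_{x}+y\,\bd_{y})$; the second summand is $\tfrac{1}{2}r\,\bd_{r}$ in polar coordinates on $\C$, and rotation in $\C$ commutes with radial scaling. Since $\Phi_{t}$ acts trivially on the $W$-factor, it commutes with $Z'$ everywhere on $W'$, not only outside a compact set. In particular $\Phi_{t}$ is contact-at-infinity; because the equivariance is global, the function $\lambda'(X_{\Phi})$ is automatically Liouville-homogeneous of degree one and descends to the contact Hamiltonian of the ideal restriction on $Y'$ with respect to the contact form induced by $\lambda'$.

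The last step is the computation of $\lambda'(X_{\Phi})$. Since $\lambda$ vanishes on $X_{\Phi}$ (as $X_{\Phi}$ is tangent to the $\C$-factor) and $\tfrac{1}{2}(x\,\d y-y\,\d x)(X_{\Phi})=\tfrac{1}{2}(2\pi x^{2}+2\pi y^{2})=\pi(x^{2}+y^{2})$, we obtain $\lambda'(X_{\Phi})=\pi|z|^{2}\ge 0$ on all of $W'$, proving the ideal restriction is non-negative. No step here is an obstacle; the only point worth noting is that $\pi|z|^{2}$ vanishes along $\{z=0\}$, so the loop is not strictly positive on its own, which is exactly why the ergodic trick of \cite{ep2000} (invoked in \S\ref{sec:ergod-trick-eliashb}) is needed to upgrade it to an extensible positive loop.
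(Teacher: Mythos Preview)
Your proof is correct and follows essentially the same route as the paper: identify the generating Hamiltonian as $\pi|z|^{2}$, observe that it is $1$-homogeneous with respect to the Liouville flow on $W'$ (equivalently, that $\Phi_{t}$ commutes with $Z'$), and conclude non-negativity of the ideal restriction from non-negativity of the Hamiltonian. The paper's proof is a terse two-line version of exactly this argument; you have simply spelled out the intermediate computations (the vector field, the equivariance, and the evaluation $\lambda'(X_{\Phi})=\pi|z|^{2}$) in more detail.
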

\begin{proof}
  The loop $\Phi_t$ is generated by the Hamiltonian $H(p,z)=\pi|z|^2$, which is $1$-homogeneous with respect to the Liouville flow. Since the generating Hamiltonian is non-negative at infinity, the desired result follows.
\end{proof}

\subsection{The ergodic trick of Eliashberg-Polterovich}
\label{sec:ergod-trick-eliashb}

A special property of the group of contactomorphisms and its universal cover is that the existence of a non-negative loop implies the existence of a strictly positive loop; see \cite[Proposition 2.1.A]{ep2000}. Moreover, if the original non-negative loop is contractible, then the resulting positive loop will also be contractible. In this subsection we show that a similar property holds for extensible loops.
\begin{lemma}\label{lem:ergodic}
    Let $\phi_t$ be an extensible non-negative loop of contactomorphisms of the ideal boundary $Y$ of a Liouville manifold $(W,\lambda)$. Then there is an extensible positive loop of contactomorphisms based at the identity.
\end{lemma}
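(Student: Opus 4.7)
The plan is to adapt the ergodic trick of Eliashberg--Polterovich (\cite[Proposition 2.1.A]{ep2000}) to the extensible setting, running their construction simultaneously on $Y$ and on $W$. The key observation is that the operations appearing in their argument --- composition, inversion, conjugation --- all preserve extensibility, since $\mathrm{Ham}(W)$ is a group.

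First I would fix a lift $\Psi_t \in \mathrm{Ham}(W)$ of $\phi_t$, whose existence is the extensibility hypothesis. Since $\Psi_0 = \Psi_1 = \id$, for every $s \in [0,1]$ the conjugated family $t \mapsto \Psi_s \Psi_t \Psi_s^{-1}$ is a contact-at-infinity Hamiltonian loop on $W$ based at the identity, with ideal restriction $\phi_s \phi_t \phi_s^{-1}$, itself a non-negative loop. Given finitely many shifts $s_1, \ldots, s_N$, I then form the pointwise composition
\begin{equation*}
  \tilde{\Psi}_t := (\Psi_{s_1} \Psi_t \Psi_{s_1}^{-1}) \circ \cdots \circ (\Psi_{s_N} \Psi_t \Psi_{s_N}^{-1}) \in \mathrm{Ham}(W),
\end{equation*}
whose ideal restriction is the corresponding product $\tilde{\phi}_t$ of conjugates of $\phi_t$. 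The contact Hamiltonian generating each factor $\phi_{s_i} \phi_t \phi_{s_i}^{-1}$ is (up to a positive conformal factor) the pullback of $h_t$ by $\phi_{s_i}^{-1}$, so the Hamiltonian generating $\tilde{\phi}_t$ is a sum of non-negative translates of $h_t$, hence non-negative.

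The second step is then to apply the Eliashberg--Polterovich argument on $Y$ to choose the shifts $s_1, \ldots, s_N$ so that $\tilde{\phi}_t$ becomes strictly positive. Assuming $\phi_t$ is non-constant (otherwise there is no content), $h_t$ is not identically zero, and compactness of $Y$ together with the density of the orbits of the family $\{\phi_s\}_{s\in[0,1]}$ on the zero locus of $h_t$ allows one to cover $Y$ by finitely many open sets of the form $\phi_{s_i}(\{h_t > 0\})$ uniformly in $t$. The resulting $\tilde{\Psi}_t$ is then an extensible positive loop based at the identity, proving the lemma.

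The main obstacle is the quantitative averaging step --- namely, that finitely many conjugates of $h_t$ can be arranged so that their sum is uniformly bounded below by a positive constant in both $t$ and $y$. This is precisely the content of \cite[Proposition 2.1.A]{ep2000}, and is independent of any considerations on $W$; once it is done on $Y$, the extensibility is automatic because every operation in the construction was lifted to $\mathrm{Ham}(W)$ from the outset.
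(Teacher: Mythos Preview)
Your overall strategy --- lift the Eliashberg--Polterovich construction to $\mathrm{Ham}(W)$ --- is sound in spirit, but there is a genuine gap in the positivity step. You conjugate only by the values $\phi_{s_i}$ of the given loop, and then assert that ``the density of the orbits of the family $\{\phi_s\}_{s\in[0,1]}$ on the zero locus of $h_t$'' lets you cover $Y$. This density claim is unjustified and in general false: a single loop of contactomorphisms need not move any given open set over all of $Y$. The Eliashberg--Polterovich argument conjugates by \emph{arbitrary} elements of $\mathrm{Cont}_0(Y,\xi)$ (and uses time shifts of the loop), precisely so that the positivity region can be transported wherever needed; restricting the conjugating elements to $\{\phi_s\}$ throws away this freedom.

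There is an easy repair along your lines: since the ideal restriction $\mathrm{Ham}(W)\to\mathrm{Cont}(Y)$ is a Serre fibration, any $g\in\mathrm{Cont}_0(Y)$ lifts to some $\tilde g\in\mathrm{Ham}(W)$, and $\tilde g\,\Psi_t\,\tilde g^{-1}$ is again a Hamiltonian loop based at the identity with ideal restriction $g\phi_t g^{-1}$. So you may conjugate by arbitrary $g$ and still lift. The paper, however, takes a cleaner route that avoids lifting the construction at all: it runs Eliashberg--Polterovich purely on $Y$ to produce a positive loop $\psi_t$, then observes that conjugation by elements of $\mathrm{Cont}_0(Y)$ does not change the free homotopy class, so $[\psi_t]=[\phi_t^k]$ for some $k$; since extensibility depends only on the homotopy class (again by the Serre fibration property) and $\phi_t^k$ is extensible, so is $\psi_t$. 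This separates the contact-geometric content from the filling, and sidesteps the need to track lifts through the averaging.
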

\begin{proof}
The Serre fibration property for the ideal restriction implies that $\psi_{t}$ being an extensible loop depends only on the free homotopy class $[\psi_{t}]$ in $\mathrm{Cont}(Y,\xi)$. Moreover, it suffices prove the existence of an extensible positive loop $\psi_{t}$ based at any point in $\mathrm{Cont}_{0}(Y)$, as $\psi_{t}\psi_{0}^{-1}$ will be extensible and based at the identity.

Let $\phi_t$ be an extensible non-negative loop of contactomorphisms. Using the construction from the proof of \cite[Proposition 2.1.A]{ep2000} one obtains a positive loop $\psi_t$ as a composition of conjugates of time shifts of the original loop $\phi_{t}$, where we conjugate by elements in $\mathrm{Cont}_{0}(Y,\xi)$.

From this construction, one observes that the free homotopy classes of the loops are related by $[\psi_t]=[\phi_t^k]$ for some positive integer $k$ (since the conjugation of a loop by an element of $\mathrm{Cont}_{0}(Y,\xi)$ does not change the homotopy class). Since $\phi^{k}_{t}$ is extensible, $\psi_{t}$ is also extensible, and hence we have furnished an extensible positive loop, as desired.
\end{proof}

\subsection{Exotic symplectomorphisms and the flux}
\label{sec:exot-sympl-flux}

The goal is to prove Theorem~\ref{theorem:exotic-mapping-class}. Thus, let $\psi_{t}$ be an isotopy in $\mathrm{Ham}(W)$ based at the identity whose ideal restriction is a positive loop. Then $[\psi_{1}]$ represents a mapping class which is known to be non-trivial in $\pi_{0}(\Ham_{c}(W))$, because of Theorem \ref{theorem:main-absolute}; it remains to explain why it is non-trivial in $\pi_{0}(\mathrm{Symp}_{c}(W))$.

Arguing by contradiction, suppose that $[\psi_{1}]$ is trivial in $\pi_{0}(\mathrm{Symp}_{c}(W));$ then there exists some compactly supported symplectic isotopy $\mu_{t}$ so that $\mu_{1}=\psi_{1}$; as usual we assume that $\mu_{t+1}=\mu_{t}\mu_{1}$.

The key idea is that $\mu_{t}$ has vanishing flux because $W$ is atoroidal. This in turn implies that there is a compactly supported isotopy $\eta_{t}$ so that:
\begin{enumerate}
\item $\eta_{1}=\mu_{1}$ (same endpoints),
\item $\eta_{t}$ is Hamiltonian.
\end{enumerate}

Once we establish the existence of such a deformation, we obtain the desired contradiction, since then $\eta_{t}^{-1}\psi_{t}$ is a loop in $\mathrm{Ham}(W)$ whose ideal restriction is a positive loop (contradicting Theorem \ref{theorem:main-absolute} since we assume $\mathrm{SH}(W)\ne 0$).

\subsubsection{Vanishing flux and the existence of a deformation through loops}
\label{sec:vanish-flux-exist}

We show the existence of a deformation $\eta_{t}$ satisfying (i), (ii).

Define the \emph{flux one-form} via the formula:
\begin{equation*}
  F_{\tau}:=F(\mu_{t};\tau)=\int_{0}^{\tau}\mu_{t}^{*}[\omega(-,X_{t})]\d t,
\end{equation*}
where $X_{t}$ is the generator of $\mu_{t}$. This is a closed, compactly supported one-form of $W$. It is well-known that, if $\Gamma$ is a loop, then:
\begin{equation*}
  \int_{\Gamma}F_{\tau}=\pm \int_{\Gamma\times [0,1]}\omega,
\end{equation*}
where $\Gamma\times [0,1]$ is shorthand for the cylinder obtained by flowing $\Gamma$ by $\mu_{t}$. In particular, since $W$ is atoroidal, $F_{1}$ is an exact one-form. Indeed, the symplectic area of $(s,t)\mapsto \mu_{t}(\Gamma(s))$ equals the symplectic area of $(s,t)\mapsto \psi_{t}(\Gamma(s))$ since $\psi_{1}=\mu_{1}$; the latter symplectic area vanishes because $\psi_{t}$ is Hamiltonian.

The results of \cite[\S10.2]{mcduffsalamon-alt} then imply that there is a compactly supported Hamiltonian isotopy $\eta_{t}$ so that $\eta_{1}=\mu_{1}$, as desired. This completes the proof of Theorem~\ref{theorem:exotic-mapping-class}.

\subsection{Some analysis of the magnetic two-sphere}
\label{sec:some-analys-magn}

We prove Lemmas \ref{lemma:non-deg-magnetic} and \ref{lemma:magnetic-circle-action}.

\subsubsection{Non-degeneracy of the magnetic symplectic structure}
\label{sec:non-degen-magn}

This section is concerned with the proof of Lemma~\ref{lemma:non-deg-magnetic}. It is clear that $\Omega$, as defined in \eqref{eq:magnetic-symplectic-form}, extends smoothly to the zero section, as $f'(r)=0$ for $r$ small enough. Moreover, $\Omega=\d\lambda$ for $r$ large enough. It remains only to show that $\Omega$ is non-degenerate; one computes:
\begin{equation}
  \Omega=\d\lambda+f(r)\d\Pi+f'(r)\d r\wedge \Pi,
\end{equation}
and thus:
\begin{equation}\label{eq:formula-for-wedge}
  \Omega\wedge \Omega=\d\lambda\wedge \d\lambda+2f(r)f'(r)\d r\wedge \Pi\wedge \d\Pi;
\end{equation}
most of the diagonal and cross terms vanish; one slightly subtle part uses that $\Pi\wedge \d\lambda$ and $\lambda\wedge \d\Pi$ vanish pointwise when restricted to $\Sigma(r)$. Indeed, since their difference is exact and both are $\mathrm{SO}(3)$-invariant, it is sufficient to prove $\lambda\wedge \d\Pi$ vanishes, which holds since $\d\Pi$ is proportional to $\d q_{1}\wedge \d q_{2}$ in local canonical coordinates.

To prove \eqref{eq:formula-for-wedge} is nowhere zero, it is convenient to introduce the exponential coordinate $r=e^{s}$ so that $\lambda=e^{s}\alpha$. Then:
\begin{equation*}
  \Omega\wedge \Omega=2\d r\wedge (e^{s}\alpha\wedge \d\alpha+f(r)f'(r)\Pi\wedge \d\Pi).
\end{equation*}
This is non-degenerate if $\delta$ is small enough, as desired.

\subsubsection{A Hamiltonian $S^1$ action in the presence of a magentic field}
\label{sec:hamilt-s1-acti}
The goal is to prove Lemma~\ref{lemma:magnetic-circle-action}. As in the proof that $\Omega$ is non-degenerate, it is convenient to introduce exponential coordinates $r=e^{s}$, and $\lambda=e^{s}\alpha$.

The restriction of $\Omega$ to $\Sigma(r)$ equals:
\begin{equation*}
  \d(e^{s}\alpha+f(e^{s})\Pi)=e^{s}\d\alpha+f(e^{s})\d\Pi.
\end{equation*}
To analyze the kernel of this, we introduce two vector fields: let $R$ and $Y$ be the unique $\mathrm{SO}(3)$-equivariant vector fields on $\Sigma(1)$ spanning the kernels of $\d\alpha$ and $\d\Pi$, respectively, and satisfying $\Pi(Y)=1$ and $\alpha(R)=1$. Both of these are in fact Reeb vector fields for different contact structures on $\Sigma(1)$. Since $\d\alpha(Y,-)$ and $\d\Pi(R,-)$ are proportional (both vanish on the span of $Y$ and $R$), it follows there is a non-zero constant $A$ so that $A\d\alpha(Y,-)=\d\Pi(R,-)$. One can presumably find the value of $A$, however we will not require this for our argument.

Extend $R,Y$ to the complement of the zero section by fiberwise radial projection. Then the vector field:
\begin{equation}\label{eq:unnormalized_X}
  X=-Af(e^{s})Y+e^{s}R
\end{equation}
spans the characteristic foliation of each $\Sigma(r)$ hypersurface. Moreover, the restriction of $X$ to each hypersurface is $\mathrm{SO}(3)$-equivariant, and hence (by knowledge of the $\mathrm{SO}(3)$-equivariant vector fields on $\mathrm{SO}(3)$), the flow by $X$ is periodic on each hypersurface (with varying periods).

First we show that $X$ is Hamiltonian, and then we explain how to renormalize its flow so as to define a circle action. Since $X$ spans the characteristic foliation of each hypersurface, we have that:
\begin{equation*}
  \Omega(-,X)=\Omega(\bd_{s},X)\d s=(e^{s}\alpha(X)+e^{s}f'(e^{s})\Pi(X))\d s.
\end{equation*}
It is clear that $\alpha(Y)=0$ (since $Y$ points along the cotangent fibers) and $\Pi(R)=0$, since $\Pi\wedge \d\alpha=0$ (as explained in the proof of Lemma \ref{lemma:non-deg-magnetic}) and $R$ spans the kernel of $\d\alpha$. Thus:
\begin{equation*}
  2\Omega(-,X)=2(e^{2s}-Ae^{s}f'(e^{s})f(e^{s}))\d s=\d (e^{2s}-Af(e^{s})^{2}),
\end{equation*}
and so $H=e^{2s}-Af(e^{s})^{2}$ satisfies $X_{H}=2X$.

Assume that $\delta$ is small enough that $e^{2s}-Af(e^{s})^{2}$ has a uniformly positive derivative, and let $\mathfrak{p}(h)>0$ be the period of the flow by $X_{H}$ on the level set $\set{H=h}$. From the formula for $X$ in \eqref{eq:unnormalized_X} it is clear that $\mathfrak{p}(h)$ is bounded from below (unlike the non-magnetic case).

The minimum value of $H$ is $-A$, and $\mathfrak{p}(h)$ extends smoothly to $[-A,\infty)$, because the formula for $X$ extends smoothly to a vector field on the compactification $[0,\infty)\times \Sigma(1)$ obtained by setting $r=e^{s}$.

Let $k:[-A,\infty)\to \R$ be an antiderivative for $1/\mathfrak{p}(h)$, and then let
\begin{equation*}
  K=k(H)\implies X_{K}=k'(H)X_{H},
\end{equation*}
so that $X_{K}$ is $1$-periodic and Hamiltonian. Moreover, since $H$ and $k$ are smooth, so is $K$. For this we needed to know that $1/\mathfrak{p}(h)$ was smooth near $h=-A$, a fact which does not hold without the magnetic contribution.

Finally note that $X_{K}$ agrees with $R$ outside of a compact set, completing the proof of Lemma~\ref{lemma:magnetic-circle-action}.

\section{Wrapped Floer cohomology and positive loops of Legendrians}
\label{sec:wrapped-floer-and-positive-loops}

In this section we develop the open string theory; in \S\ref{sec:natur-transf-assoc} we complete the proof of Theorem \ref{theorem:main-relative} and in \S\ref{sec:lagrangian-flux} we analyze the Lagrangian analogue of the flux and prove Theorem \ref{theorem:lagrangian-exotic}.

\subsection{Serre fibration property for the Lagrangian ideal restriction}
\label{sec:lagrangian-ideal-restriction}

As explained in \S\ref{sec:exot-fillings-lagr}, there is a Serre fibration $\mathrm{Lag}(W)\to \mathrm{Leg}(Y)$, where:
\begin{enumerate}
\item $\mathrm{Lag}(W)$ is the space of Lagrangians which are contact-at-infinity, i.e., which are tangent to the Liouville vector field at infinity,
\item $\mathrm{Leg}(Y)$ is the space of Legendrians in $Y$;
\end{enumerate}
see \S\ref{sec:space-cylindrical-lags} for the topology on these spaces and \S\ref{sec:serre-fibr-prop} for the proof of the Serre fibration property. We denote by $\mathrm{Lag}(W;\Lambda)$ the fiber over $\Lambda$.

The ideal restriction morphism is straightforward to define: intersect $L$ with the complement of a large starshaped domain and then project to the ideal boundary. Our assumptions ensure that the ideal restriction is a Legendrian.

\subsubsection{The topology on the space of contact-at-infinity Lagrangians}
\label{sec:space-cylindrical-lags}
The statement that the above sequence is fibrant necessarily involves some topology on the sets of Legendrians and contact-at-infinity Lagrangians. We will now describe these spaces precisely.

For a closed contact manifold \( (Y,\ker(\xi)) \), the space of Legendrians \( \mathrm{Leg}(Y) \) carries the \( C^{\infty} \) topology (i.e., the topology induced by Legendrian isotopy).

When \( W \) is a convex-at-infinity symplectic manifold, we topologize \( \mathrm{Lag}(W) \) with the $C^{\infty}$ topology; the precise definition is similar to the one in \S\ref{sec:topol-group-cont}.

It is also important to consider $\mathrm{Lag}^{\mathrm{x}}(W)$ which is the same underlying set but with the more restrictive \emph{Hamiltonian topology}.
The Hamiltonian topology on the space of Lagrangians is such that every continuous map from a disk $\mathfrak{L}:D\to \mathrm{Lag}\mathrm{x}(W)$ extends to $\varphi:D\to \mathrm{Ham}(W)$ in such a way that $\varphi_{x}(\mathfrak{L}(0))=\mathfrak{L}(x)$.
The identity map $\mathrm{Lag}^{\mathrm{x}}(W)\to \mathrm{Lag}(W)$ is continuous (but not vice-versa); moreover, the connected component of $L$ in $\mathrm{Lag}^{\mathrm{x}}(W)$ only contains those Lagrangians which are isotopic to $L$ via a contact-at-infinity system.
We refer the reader to \cite[Remark 1.1]{seidel-lecture-4d-dehn-twist} for related discussion; the space $\mathrm{Lag}^{\mathrm{x}}(W)$ is similar to the space $\mathscr{L}$ considered in \cite{chekanov_2000}.

Note that the Hamiltonian topology agrees with the \( C^{\infty} \) topology whenever the underlying submanifold has vanishing first cohomology, relative its end. One obstruction to realizing a Lagrangian isotopy by an ambient Hamiltonian isotopy is the class in \( H^1(L,\partial L) \) induced by the time derivative of the isotopy (this is a relative version of the flux); see \S\ref{sec:lagrangian-flux} for further discussion.

If $\mathrm{Lag}^{\mathrm{x}}(W)\to \mathrm{Leg}(Y)$ is a Serre fibration, then, since $\mathrm{Lag}^{\mathrm{x}}(W)\to \mathrm{Lag}(W)$ is continuous, the map $\mathrm{Lag}(W)\to \mathrm{Leg}(Y)$ will also be a Serre fibration. Thus we will focus on proving that $\mathrm{Lag}^{\mathrm{x}}(W)\to \mathrm{Leg}(Y)$ is a Serre fibration.

\subsubsection{Parametric isotopy extension}
\label{sec:param-isot-extens}
As a consequence of the definition in \S\ref{sec:space-cylindrical-lags}, for any \( L \in \mathrm{Lag}(W) \), the map:
\[ \mathrm{Ham}(W) \rightarrow \mathrm{Lag}^{\mathrm{x}}(W) \] given by \( \phi \mapsto \phi(L) \) is a Serre fibration. This can be thought of as a variant of the isotopy extension theorem.

It is an immediate consequence of the parametric isotopy theorem for Legendrians that $\mathrm{Leg}(Y)$ is the same space as the translation-invariant Lagrangians of \( SY \) equipped with the Hamiltonian topology. We refer the reader to \cite[\S2.3]{traynor_helix_links} and \cite[\S2.6]{Geiges} for a proof of the isotopy extension theorem in the case of a single Legendrian; the argument can be repeated nearly verbatim over the parameter space to obtain the parametric version.

\subsubsection{Serre fibration property}
\label{sec:serre-fibr-prop}
In order to prove that the ideal restriction is a Serre fibration we must fill in the lifting diagram:
\begin{equation*}
  \begin{tikzcd}
    {D^n} \arrow[r,"\Phi_{0}"] \arrow[d,hook,swap,"\mathrm{id}\times \set{0}"] & {\mathrm{Lag}^{\mathrm{x}}(W)} \arrow[d]\\
    {D^n \times {[0,1]}} \arrow[r,"\phi"] \arrow[ur,dashed,"\Phi"] & {\mathrm{Leg}(Y)}
  \end{tikzcd}.
\end{equation*}
The existence of such a lift, \( \Phi, \) follows immediately from the parametric isotopy extension theorem for Legendrians, and the contact-at-infinity condition, by cutting off below a fixed symplectization level. In other words, the same proof used for the Serre fibration property for $\mathrm{Ham}(W)\to \mathrm{Cont}(Y)$ from \S\ref{sec:fibr-sequ-sympl} applies here, using parametric isotopy extension.

Often we will use the Serre fibration property when the left hand side is instead: $$([0,1]^{n}\times\set{0})\cup (\bd [0,1]^{n}\times [0,1])\subset [0,1]^{n+1};$$
the fact that the Serre fibration property applies to this domain is well-known.

\subsection{Wrapped Floer cohomology}
\label{sec:wrapped-floer-setup}

The main ideas and definitions in this section are straightforward adaptations of those in \S\ref{sec:floer-cohom-groups} to the open string case.

\subsubsection{Definition of the Floer complex}
\label{sec:floer-data}

Fix a contact-at-infinity Lagrangian $L$ with ideal restriction $\Lambda$. A contact-at-infinity system $\psi_{t}\in \mathrm{Ham}(W)$ is \emph{non-degenerate} provided $\psi_{1}(L)$ is transverse to $L$. This requires that the ideal restriction $\psi_{1}(\Lambda)$ is disjoint from $\Lambda$, and this disjunction plays the role of the discriminant point condition.

Given $J_{t}\in \mathscr{J}$, define $\mathscr{M}(L,\psi_{1}(L),J_{t})$ to be the space of all solutions to:
\begin{equation*}
  \left\{
    \begin{aligned}
      &w:\R\times [0,1]\to W,\\
      &\bd_{s}w+J_{t}(w)\bd_{t}w=0,\\
      &w(s,0)\in \psi_{1}(L)\text{ and }w(s,1)\in L.
    \end{aligned}
  \right.\hspace{1cm}
  \begin{tikzpicture}[baseline={(0,0.1)},scale=0.7]
    \draw (0,0)--node[below]{$\psi_{1}(L)$}+(5,0)(0,1)--node[above]{$L$}+(5,0);
  \end{tikzpicture}
\end{equation*}
Similarly to the closed string case, the choice of system $\psi_{t}$ allows us to change coordinates $u(s,t)=\psi_{1-t}^{-1}(w(s,t))$ so that $u$ solves the normal Floer's equation for the system $\psi_{1-t}^{-1}\psi_{1}$ with both boundaries on $L$; see \S\ref{sec:energy-estim-other} for further discussion.

Let us say that $(\psi_{t},J_{t})$ is \emph{admissible for defining the Floer complex} if $\psi_{1}(L)\cap L$ is transverse and the moduli space $\mathscr{M}(L,\psi_{1}(L),J_{t})$ is cut transversally.

In this case, let $\mathrm{CF}(L;\psi_{t},J_{t})$ be the $\Z/2$-vector space generated by the intersection points of $L$ and $\psi_{1}(L)$, whose differential is given by counting the curves in $\mathscr{M}_{1}/\R$ going from right to left, exactly as in the closed string case.

\subsubsection{Subcomplex generated by contractible chords}
\label{sec:subc-gener-contr-1}

Similarly to \S\ref{sec:subc-gener-contr}, we introduce $\mathrm{CF}_{0}(L;\psi_{t},J_{t})$ be the subcomplex generated by chords which are contractible, i.e., chords $\gamma(t)$ which admit a \emph{capping} $v:[0,1]^{2}\to W$ so that:
\begin{equation*}
  v(1,t)=\gamma(t)\text{ and }v(s,0),v(s,1)\in L\text{ and }v(0,t)=\text{const}.
\end{equation*}
It is clear that $\mathrm{CF}_{0}$ is preserved under the differential and the continuation maps defined in \S\ref{sec:continuation-maps-1}, and we let $\mathrm{HF}_{0}$ denote the cohomology of $\mathrm{CF}_{0}$.

\subsubsection{Continuation maps}
\label{sec:continuation-maps-1}

Let $\psi_{s,t},J_{s,t}$ be a path of Floer data (which is $s$ independent outside of $[s_{0},s_{1}]$). The continuation map moduli space $\mathscr{M}(\psi_{s,t},J_{s,t})$ consists of solutions to:
\begin{equation*}
  \left\{
    \begin{aligned}
      &w:\R\times [0,1]\to W,\\
      &\bd_{s}w+J_{-s,t}(w)\bd_{t}w=0,\\
      &w(s,0)\in \psi_{-s,1}(L)\text{ and }w(s,1)\in L.
    \end{aligned}
  \right.
\end{equation*}
We reverse the sign to $-s$ so as to be consistent with the input being at the positive end.

We say that $\psi_{s,t},J_{s,t}$ is \emph{admissible continuation data} if:
\begin{enumerate}
\item the ideal restriction of $\psi_{s,1}(L)$ is a non-negative Legendrian isotopy,
\item the moduli space $\mathscr{M}$ is cut transversally, and
\item the endpoints of the path are admissible for defining the Floer complex.
\end{enumerate}
The first assumption (i) ensures an a priori energy bound, which implies the requisite compactness results needed to do Floer theory.

In this case, the zero dimensional component $\mathscr{M}_{0}$ consists of finitely many points. Each point determines a map $\mathrm{CF}(L;\psi_{s_{0},t},J_{s_{0},t})\to \mathrm{CF}(L;\psi_{s_{1},t},J_{s_{1},t})$ by sending the right asymptotic to the left asymptotic, and the continuation map $\mathfrak{c}$ is the sum of all these individual contributions.

As usual, consideration of the one-dimensional component $\mathscr{M}_{1}$ implies $\mathfrak{c}$ is a chain map. Parametric moduli spaces imply $\mathfrak{c}$ remains in the same chain homotopy class under deformations $\psi_{s,t}^{\eta},J_{s,t}^{\eta}$ with fixed endpoints $s=s_{0}$ and $s=s_{1}$ and for which the ideal restriction $s\mapsto \psi_{s,1}^{\eta}(L)$ remains non-negative.

Finally, standard gluing arguments imply that the induced maps on $\mathrm{HF}(L;\varphi_{t},J_{t})$ are functorial with respect to concatenation of paths. See, e.g., \cite[\S3.2]{biran-cornea-rigidity-uniruling} and \cite[\S8.j]{seidel_book} for further discussion.

\subsubsection{Energy estimate and other a priori estimates}
\label{sec:energy-estim-other}
Suppose that $\psi_{s,t},J_{s,t}$ is admissible continuation data and the ideal restriction of $\psi_{s,1}(L)$ is a non-negative Legendrian isotopy. The goal in this section is to explain the a priori energy estimates for solutions of $\mathscr{M}(\psi_{s,t},J_{s,t})$, as in \S\ref{sec:energy-estim-ator}. The acylindrical assumption will be used similarly to the atoroidal assumption in the closed-string case.

The secondary goal is to explain how to upgrade the energy estimate to other norms, so as to ensure the necessary compactness results hold; see \cite[\S2.2.4]{cant_sh_barcode} for related discussion.

The energy of $w\in \mathscr{M}(\psi_{s,t},J_{s,t})$ is defined as the integral of $\omega$ over $w$. Introduce the coordinate change: $$u(s,t)=\psi_{-s,1-t}^{-1}(w(s,t)),$$ so that $u$ has the $t=0,1$ boundaries on $L$. Let $Y_{s,t}$ and $X_{s,t}$ be the generators of $\xi_{s,t}:=\psi_{-s,1-t}^{-1}$ with respect to variations in $s$ and $t$. One computes:
\begin{equation*}
  \bd_{s}u-Y_{s,t}=\d\xi_{s,t}\bd_{s}w\hspace{1cm}\bd_{t}u-X_{s,t}=\d\xi_{s,t}\bd_{t}w.
\end{equation*}
Since $\d\xi_{s,t}$ is symplectic, the energy of $w$ satisfies:
\begin{equation*}
  E(w)=\int\omega(\bd_{s}u-Y_{s,t},\bd_{t}u-X_{s,t})\d s\d t,
\end{equation*}
and a standard computation yields:
\begin{equation}\label{eq:open-string-energy-bound}
  E(w)=\int u^{*}\omega+\int_{\R} K_{s,1}(u(s,1))-K_{s,0}(u(s,0))\d s+\int_{\gamma_{-}}H_{-,t}\d t-\int_{\gamma_{+}}H_{+,t}\d t.
\end{equation}
where $H_{s,t},K_{s,t}$ are normalized generators for $X_{s,t}$ and $Y_{s,t}$, and we use the curvature identity:
\begin{equation*}
  \bd_{s}H_{s,t}-\bd_{t}K_{s,t}+\omega(Y_{s,t},X_{s,t})=0.
\end{equation*}
Since $Y_{s,1}$ generates $\xi_{s,1}=\id$ while $Y_{s,0}$ generates $\xi_{s,0}=\psi^{-1}_{-s,1}$, which is a \emph{non-negative path}, we conclude that $K_{1,t}=0$ and $K_{0,t}\ge 0$ outside of a compact set. Consequently, we can bound the second term in \eqref{eq:open-string-energy-bound}. The terms involving $H_{\pm,t}$ can be bounded exactly as in \S\ref{sec:energy-estim-ator} (one uses that the asymptotics are contained in a compact set), while the symplectic area term can be bounded using the assumption that $L$ is symplectically acylindrical. This completes the proof of the a priori energy bound.

We can upgrade this energy bound to an a priori $C^{1}$ bound using bubbling analysis (bearing in mind that the symplectically acylindrical condition implies there are no $J$-holomorphic spheres or disks on $L$). Standard elliptic bootstrapping and the Sobolev embedding theorem ensure that any sequence $w_{n}$ in the moduli space has a subsequence which converges in $C^{\infty}_{\mathrm{loc}}$.

A straightforward variant of the argument in \cite[\S2]{brocic_cant} and \cite[\S2.2.5]{cant_sh_barcode} shows that any sequence $w_{n}\in \mathscr{M}(\psi_{s,t},J_{s,t})$ with bounded energy will remain in a fixed compact set (depending only on the energy bound, the $C^{1}$ bound, and $\psi_{\pm,t}$).

These compactness results are implicitly used when appealing to standard Floer theoretic arguments.

\subsubsection{Wrapped Floer cohomology}
\label{sec:wrapp-floer-cohom}

The open string analogue of $\mathrm{SH}(W)$ is the wrapped Floer cohomology of $L$, denoted $\mathrm{HW}(L)$. It is defined as the colimit of some functor. The colimit of the groups generated by contractible orbits is denoted $\mathrm{HW}_{0}(L)$.

Let $\Delta(L)$ be the small category consisting data $\psi_{t},J_{t}$ admissible data, and whose morphisms $(\psi_{0,t},J_{0,t})\to (\psi_{1,t},J_{1,t})$ consist of homotopy classes of extensions $\psi_{s,t}$ for which the ideal restriction $\psi_{s,1}(L)$ is non-negative. Assigning to each object the Floer cohomology $\mathrm{HF}(L;\psi_{t},J_{t})$, and to each morphism the continuation map (with $J_{s,t}$ chosen arbitrarily and $\psi_{s,t}$ perturbed as to be admissible), defines a functor $\Delta(L)\to \mathrm{Vect}$.

The colimit of this functor is the \emph{wrapped Floer cohomology} $\mathrm{HW}(L)$.

\subsubsection{The Floer cohomology associated to a Legendrian isotopy}
\label{sec:floer-cohom-legendr}

As in the closed string case, the Floer cohomology group $\mathrm{HF}(L;\psi_{t},J_{t})$ depends only the ideal restriction of $\psi_{t}(L)$. To make this precise, fix a Legendrian isotopy $\Lambda_{t}$ so that $\Lambda_{0}=\Lambda$ is the ideal boundary of $L$.

Let $\Delta(L;\Lambda_{t})$ be of the subset of $\Delta(L)$ consisting of elements $(\psi_{t},J_{t})$ so that the ideal restriction of $\psi_{t}(L)$ equals $\Lambda_{t}$.

\begin{prop}
  There is a unique morphism $[\psi_{s,t}]$ in $\Delta(L)$ between two objects in $\Delta(L;\Lambda_{t})$ so that the ideal restriction of $\psi_{s,t}(L)$ is identically equal to $\Lambda_{t}$.
\end{prop}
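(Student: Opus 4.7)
The plan is to deduce the proposition from the Serre fibration property of the map $\mathrm{Ham}(W)\to \mathrm{Lag}^{\mathrm{x}}(W)$, $\phi\mapsto \phi(L)$, recalled in \S\ref{sec:param-isot-extens}. Both existence of a representative with constant ideal restriction on $L$ and uniqueness of its homotopy class reduce to lifting problems in which the target in $\mathrm{Lag}^{\mathrm{x}}(W)$ is the constant-in-$s$ isotopy $\Lambda_{t}$, so that the non-negativity requirement of \S\ref{sec:floer-cohom-legendr} becomes vacuous.

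For existence, I would consider the square $[0,1]_{s}\times [0,1]_{t}$ and the map $F:[0,1]^{2}\to \mathrm{Lag}^{\mathrm{x}}(W)$ given by $F(s,t)=\Lambda_{t}$. On the subset $A=(\{0,1\}\times [0,1])\cup ([0,1]\times\{0\})$ there is a canonical lift to $\mathrm{Ham}(W)$: send $(i,t)$ to $\psi_{i,t}$ for $i=0,1$ and send $(s,0)$ to $\id$. This lift is well-defined and continuous since $\psi_{i,0}=\id$ and the ideal restriction of $\psi_{i,t}(L)$ equals $\Lambda_{t}$ by hypothesis. Because $(\,[0,1]^{2},A\,)$ has the homotopy extension property and $\mathrm{Ham}(W)\to \mathrm{Lag}^{\mathrm{x}}(W)$ is a Serre fibration, this partial lift extends to a lift $\psi_{s,t}:[0,1]^{2}\to \mathrm{Ham}(W)$, which is the desired extension. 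The ideal restriction of $\psi_{s,1}(L)$ is the constant path at $\Lambda_{1}$, which is trivially non-negative, and $\psi_{s,0}=\id$ by construction, so $\psi_{s,t}$ represents a bona fide morphism in $\Delta(L)$.

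For uniqueness, given two representatives $\psi^{0}_{s,t}$ and $\psi^{1}_{s,t}$ of this form, I would run the same argument one dimension higher. Consider the cube $[0,1]_{\eta}\times [0,1]_{s}\times [0,1]_{t}$ and the map $\tilde F(\eta,s,t)=\Lambda_{t}$. On the three-faces $\{\eta=0,1\}$ use $\psi^{0},\psi^{1}$; on $\{s=0\}$ and $\{s=1\}$ use the $\eta$-independent lifts $\psi_{0,t}$ and $\psi_{1,t}$; on $\{t=0\}$ use the constant map $\id$. These lifts are mutually compatible on intersections, giving a lift on the union of these five faces, which is again a collarable subcomplex with the homotopy extension property. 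Invoking the Serre fibration property one more time produces a cube $\psi^{\eta}_{s,t}$ with $\psi^{\eta}_{s,t}(L)=\Lambda_{t}$ throughout, hence a homotopy of extensions in $\Delta(L)$ between $\psi^{0}$ and $\psi^{1}$ staying within the category in which the ideal restriction along $t=1$ is constant, a fortiori non-negative. Thus the two morphisms agree in $\Delta(L)$.

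The only subtle point is that the lifts must be carried out in the \emph{Hamiltonian} topology on $\mathrm{Lag}^{\mathrm{x}}(W)$ rather than the $C^{\infty}$ topology, since it is for this topology that $\mathrm{Ham}(W)\to \mathrm{Lag}^{\mathrm{x}}(W)$ is a Serre fibration. Also, one should observe that admissibility (transverse cut of the continuation moduli space) is not required of the representative, only of its endpoints $\psi_{0,t}$ and $\psi_{1,t}$, so one is free to use the Serre-fibration output directly; a small perturbation supported away from $t=0,1$ and leaving the ideal restriction $\Lambda_{t}$ untouched can always be arranged to make the interior data admissible if desired, but this is irrelevant at the level of homotopy classes.
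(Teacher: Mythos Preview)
Your overall skeleton --- existence via a square lift and uniqueness via a cube lift --- matches the paper's proof exactly. However, there is a genuine gap in the execution: you invoke the fibration $\mathrm{Ham}(W)\to \mathrm{Lag}^{\mathrm{x}}(W)$ and then define the map to be lifted as $F(s,t)=\Lambda_{t}$, but $\Lambda_{t}$ is a \emph{Legendrian} in $Y$, not a Lagrangian in $W$. The fibration you cite requires a map into $\mathrm{Lag}^{\mathrm{x}}(W)$, and no such map has been produced; there is no canonical Lagrangian in $W$ with ideal boundary $\Lambda_{t}$ to fill in the interior of the square.

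This is precisely the missing step the paper supplies. The paper proceeds in two stages: first it uses the Serre fibration $\mathrm{Lag}^{\mathrm{x}}(W)\to \mathrm{Leg}(Y)$ of \S\ref{sec:serre-fibr-prop} to lift the constant-in-$s$ Legendrian square $\Lambda_{s,t}=\Lambda_{t}$ (with prescribed lifts $\psi_{i,t}(L)$ on $s=0,1$ and $L$ on $t=0$) to a family of Lagrangians $L_{s,t}\in \mathrm{Lag}^{\mathrm{x}}(W)$; only then does it apply parametric isotopy extension $\mathrm{Ham}(W)\to \mathrm{Lag}^{\mathrm{x}}(W)$ to obtain $\psi_{s,t}$ with $\psi_{s,t}(L)=L_{s,t}$. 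The uniqueness step likewise uses both fibrations on a cube. Your argument becomes correct if you either insert this intermediate Lagrangian lift, or instead appeal directly to the composite Serre fibration $\mathrm{Ham}(W)\to \mathrm{Leg}(Y)$ (a composition of the two fibrations above), set $F(s,t)=\Lambda_{t}$ with target $\mathrm{Leg}(Y)$, and run your lifting argument there.
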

\begin{proof}
  Let $\psi_{0,t}$ and $\psi_{1,t}$ be two objects. Since $\mathrm{Lag}^{\mathrm{x}}\to \mathrm{Leg}$ is a Serre fibration, there is some Lagrangian $L_{s,t}$ so that $L_{i,t}=\psi_{i,t}(L)$, $L_{s,0}=L$ and the ideal restriction of $L_{s,t}$ is $\Lambda_{t}$; one applies the Serre fibration property to the square $\Lambda_{s,t}=\Lambda_{t}$ which is already lifted at $s=0,1$ and $t=0$.

  Next one applies the parametric isotopy extension theorem of \S\ref{sec:param-isot-extens} to conclude some extension $\psi_{s,t}$ so that $\psi_{s,t}(L)=L_{s,t}$. Since the ideal restriction of $\psi_{s,t}(L)$ is $s$-independent, it defines a continuation map which is invertible.

  Thus we have proved there is some morphism. Now we will prove it is unique. If $\psi_{s,t}^{0},\psi_{s,t}^{1}$ both have ideal restriction $\Lambda_{t}$, then we can first find $L_{s,t}^{\eta}$ so:
  \begin{enumerate}
  \item $L_{s,t}^{i}=\psi_{s,t}^{i}(L)$ for $i=0,1$,
  \item $L_{i,t}^{\eta}=\psi_{i,t}(L)$ for $i=0,1$, (bear in mind $\psi^{i}_{s,t}$ have same endpoints $s=0,1$),
  \item $L_{s,0}^{\eta}=L$,
  \item the ideal restriction of $L_{s,t}^{\eta}$ is $\Lambda_{t}$.
  \end{enumerate}
  One applies the Serre fibration property to the cube $\Lambda_{s,t}^{\eta}=\Lambda_{t}$, which has a lift on all but one of its faces. Then the parametric isotopy extension theorem gives an extension $\psi_{s,t}^{\eta}$, which proves the morphism is unique.
\end{proof}

We endow $\Delta(L;\Lambda_{t})$ with these canonically defined morphisms, so that there is a unique morphism between any two objects. As in the closed string case, this enables us to define:
\begin{equation*}
  \mathrm{HF}(L;\Lambda_{t})=\mathop\mathrm{colim}_{\Delta(L;\Lambda_{t})}\mathrm{HF}(L;\varphi_{t},J_{t}).
\end{equation*}

One can consider the smaller category $\Delta'(L)$ whose objects are isotopies $\Lambda_{t}$ so that $\Lambda_{0}=\Lambda$ is the ideal boundary of $L$, and whose morphisms are paths $\Lambda_{s,t}$ so that $\Lambda_{s,1}$ is a non-negative path. For the same reasons as the closed string case \S\ref{sec:floer-cohom-assoc}, the assignment $\Lambda_{t}\in \Delta'(L)\mapsto \mathrm{HF}(L;\Lambda_{t})$ is functorial, and the colimit of this simpler functor recovers $\mathrm{HW}(L)$.

We also remark that $\mathrm{HF}(L,\Lambda_{t})$ depends only on $(\Lambda_{1},[\Lambda_{t}])$ (and $L$) up to isomorphism, although it seems difficult to canonically obtain an isomorphism; there seems to be an ambiguity due to $\pi_{2}(\mathrm{Leg})$.

Similar arguments apply to defining $\mathrm{HF}_{0}(L;\Lambda_{t})$, the cohomology generated by the contractible orbits also depends only on the ideal restriction.

\subsubsection{Dominating sequences}
\label{sec:dominating-sequences}

Similar to the closed string case in \S\ref{sec:domin-sequ}, call a functor \( \mathfrak{N}:\mathbb{N} \to \Delta(L) \), \emph{dominating} if there is a $C^{\infty}$ open neighborhood $U$ of the constant paths of Legendrians and representatives $\psi_{i,s,t}$ for $\mathfrak{N}(i)\to \mathfrak{N}(i+1)$ so that $\psi_{i,s,1}(\Lambda_{s})$ is positive for all paths $\Lambda_{s}$ contained in $U$.

An analogous proof to the closed string case shows that dominating functors are filtering in \( \Delta(L) \) and are thus cofinal by Lemma \ref{lemma:filter-cofinal}.

\subsection{PSS morphisms and the product structure on wrapped Floer cohomology}
\label{sec:pss-morph-prod}

The result we need from this section is that the colimit map:
\begin{equation*}
  \mathrm{HF}(L;R_{-\epsilon t}^{\alpha})\to \mathrm{HW}(L)
\end{equation*}
is not surjective, unless $\mathrm{HW}(L)$ is zero. This non-surjectivity is used in \S\ref{sec:natur-transf-assoc} to complete the proof of Theorem \ref{theorem:main-relative}.

As in \S\ref{sec:when-unit-born}, the idea is to consider the unit element, and to argue that a nilpotent element in $\mathrm{HW}(L)$ can never equal the unit. This part of the argument is sensitive to the existence of holomorphic disks, and we use the acylindrical assumption (at least, the weakly exact assumption; see \S\ref{sec:pair-pants-product}) in a crucial way.

\subsubsection{Pair-of-pants product in the open string case}
\label{sec:pair-pants-product}
Analogously to the closed string discussion in \S\ref{sec:PSS-and-the-unit-sympl-cohom}, we have the following result characterizing the relevant algebraic structure of wrapped Floer cohomology $\mathrm{HW}_{0}(L)$ (generated by the contractible chords).

The theorem holds for Lagrangians $L$ which are \emph{weakly exact}, i.e., so that $\omega$ vanishes on $\pi_{2}(W,L)$. This condition is the open-string analogue to the aspherical condition, and is a weaker assumption than being acylindrical.

\begin{theorem}[{\cite[Theorems 6.13, 6.14]{ritter_TQFT}}]\label{thm:hw-ring-str}
  For \( L \subset W \) a weakly-exact and contact-at-infinity Lagrangian, the wrapped Floer cohomology \( \mathrm{HW}_{0}(L) \), equipped with the ``triangle product'' is a ring with unit. Moreover there is a (unital) ring homomorphism:
  \begin{equation}
    H^{*}(L)\simeq \mathrm{HF}_{0}(L;R_{\epsilon t}^{\alpha})=\mathrm{HF}(L;R_{\epsilon t}^{\alpha}) \to \mathrm{HW}_{0}(L),
  \end{equation}
  where \( H^*(L) \) is equipped with the cup product, provided $\epsilon>0$ is sufficiently small.
\end{theorem}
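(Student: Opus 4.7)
The plan is to adapt Ritter's construction of the triangle product and PSS map to the present setting, where the principal issue is obtaining \emph{a priori} energy bounds without Novikov coefficients; the weakly exact hypothesis plays the role of the aspherical hypothesis in the closed-string case (Corollary \ref{cor:aspherical-pop}).

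First, I would define the triangle product on cohomology generated by contractible chords. Counting finite-energy maps from a disk with three boundary punctures (two positive inputs and one negative output), with boundary on $L$, and with Hamiltonian perturbations interpolating between the generators of $R^{\alpha}_{at}$, $R^{\alpha}_{bt}$ and $R^{\alpha}_{(a+b)t}$ on the three strip-like ends, defines a chain map
\begin{equation*}
  \mathrm{CF}_{0}(L;R^{\alpha}_{at})\otimes \mathrm{CF}_{0}(L;R^{\alpha}_{bt}) \to \mathrm{CF}_{0}(L;R^{\alpha}_{(a+b)t}).
\end{equation*}
The a priori energy estimate is obtained analogously to \S\ref{sec:energy-estim-other}: after the standard coordinate change converting twisted solutions into genuine solutions of Floer's equation on a triangle surface, the energy is expressed as a sum of boundary terms plus the symplectic area of the capped triangle. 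Fixing cappings once and for all on the contractible chords, the symplectic area is bounded because any difference of cappings of the same chord yields an element of $\pi_{2}(W,L)$, on which $\omega$ vanishes by weak exactness. The usual breaking and gluing arguments show this product is associative and graded-commutative at the level of cohomology, and compatible with continuation in $a,b$, so taking the colimit endows $\mathrm{HW}_{0}(L)$ with a ring structure.

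Next I would identify the unit. For $\epsilon$ smaller than the minimal period of a Reeb chord of $\Lambda$, every chord of $R^{\alpha}_{\epsilon t}$ is constant and automatically contractible, so $\mathrm{HF}_{0}(L;R^{\alpha}_{\epsilon t}) = \mathrm{HF}(L;R^{\alpha}_{\epsilon t})$. A small-Hamiltonian/PSS-type argument as in \S\ref{sec:pss-comparison}, using a Morse function on $L$, identifies this group with the singular cohomology $H^{*}(L)$; the weakly exact hypothesis again controls disk bubbling so the relevant moduli spaces of spiked half-disks are compact. I would then show the PSS map intertwines the cup product on $H^{*}(L)$ with the triangle product on $\mathrm{HF}_{0}(L;R^{\alpha}_{\epsilon t})$ by the standard deformation argument (deforming the triangle product configuration to a pair of spiked half-disks joined by a Morse flow line, exactly as in \cite{ritter_TQFT}). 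Composing with the colimit map $\mathrm{HF}_{0}(L;R^{\alpha}_{\epsilon t}) \to \mathrm{HW}_{0}(L)$ yields the desired unital ring homomorphism $H^{*}(L) \to \mathrm{HW}_{0}(L)$.

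The main obstacle is the energy estimate for the triangle moduli spaces when cappings are not canonically defined. One must check that the dependence of the area of the capped triangle on the chosen cappings is absorbed into a finite constant depending only on the asymptotic chords (not on the curve), and this is precisely where weak exactness is essential: two different cappings of the same contractible chord differ by a disk in $\pi_{2}(W,L)$, on which $\omega$ vanishes. The rest of the argument is a routine translation of \cite[\S6]{ritter_TQFT} to our open-string setting, using the a priori $C^{0}$ and $C^{1}$ bounds established in \S\ref{sec:energy-estim-other}.
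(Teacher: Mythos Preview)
Your proposal is correct and aligns with the paper's treatment. The paper does not give an independent proof of this theorem; it cites \cite[Theorems 6.13, 6.14]{ritter_TQFT} and adds only the remark that restricting to contractible chords is what allows a priori symplectic area estimates on the triangles via fixed cappings, with the weakly exact hypothesis ensuring independence of the choice of capping. Your sketch fleshes out exactly this point and otherwise defers to Ritter's construction, which is precisely what the paper does.
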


The reason we restrict to the case of contractible chords is in order to have a priori symplectic area estimates for the ``pairs-of-pants'' (or ``triangles'') which appear when defining the product. One can obtain the required a priori energy estimates for the product between non-contractible chords at the expense of working over the Novikov field, or assuming the Lagrangian is sufficiently exact (one should require that $\omega$ vanishes on every pair-of-pants, i.e., sphere with three disks removed, with boundary on $L$). See \S\ref{sec:subc-gener-contr} for related discussion in the closed string case.

\subsubsection{PSS morphisms and the birth of the unit}
\label{sec:pss-morphisms-birth}

As in \S\ref{sec:pss-comparison}, the continuation map $\mathrm{HF}(L;R_{-\epsilon t}^{\alpha})\to \mathrm{HF}(L;R_{\epsilon t}^{\alpha})$ can be factorized as:
\begin{equation}\label{eq:lag-factorization}
  \begin{tikzcd}
    {\mathrm{HF}(L;R_{-\epsilon t}^{\alpha})}\arrow[d,"{\mathrm{PSS}}"]\arrow[r,"{\mathfrak{c}_{\mathrm{F}}}"] &{\mathrm{HF}(L;R_{+\epsilon t}^{\alpha})}\arrow[from=2-2,"{\mathrm{PSS}}"]\\
    {\mathrm{HM}(L;f_{-})}\arrow[r,"{\mathfrak{c}_{\mathrm{M}}}"] &{\mathrm{HM}(L;f_{+})},
  \end{tikzcd}
\end{equation}
where the lower map $\mathfrak{c}_{\mathrm{M}}$ is the Morse continuation map defined by counting continuation lines (the Morse theory part happens entirely in $L$).

\begin{figure}[H]
  \centering
  \begin{tikzpicture}
    \draw (0,1) to[out=-10,in=190] (4,1) (0,0) to[out=10,in=170] (4,0);
    \node at (0,0.5) [left] {$\mathrm{HF}(L;R_{+\epsilon t}^{\alpha})$};
    \node at (4,0.5) [right] {$\mathrm{HF}(L;R_{-\epsilon t}^{\alpha})$};
    \begin{scope}[shift={(0,-2)}]
    \node at (0,0.5) [left] {$\mathrm{HF}(L;R_{+\epsilon t}^{\alpha})$};
    \node at (4,0.5) [right] {$\mathrm{HF}(L;R_{-\epsilon t}^{\alpha})$};
      \draw (0,1) to[out=-10,in=10,looseness=3] coordinate(X) (0,0)  (4,1) to[out=190,in=170,looseness=3]coordinate(Y) (4,0);
      \draw[every node/.style={circle,fill=black,inner sep=1pt}] (X) to[out=0,in=180] +(.7,0.2)node{} to[out=0,in=180] +(1.4,-0.2)node{} to[out=0,in=180](Y);
    \end{scope}
  \end{tikzpicture}
  \caption{Deforming the continuation map so as to factorize it; compare Figures \ref{fig:PSS} and \ref{fig:PSS-more-detail}.}
  \label{fig:deforming-pss-so-as-to-factorize}
\end{figure}

Exactly as in \S\ref{sec:pss-comparison} it is important that $-\epsilon<0<\epsilon$, so that the continuation strips satisfy the required non-negativity for the a priori energy bounds in \S\ref{sec:energy-estim-other}. We leave the details of the factorization \eqref{eq:lag-factorization} to the reader.

Then, by degree reasons, since the ideal boundary $\Lambda$ is presumed to be non-empty, the Morse continuation map:
\begin{equation*}
  \mathfrak{c}_{M}:\mathrm{HM}(L;f_{-})\to \mathrm{HM}(L;f_{+})
\end{equation*}
hits only elements of positive degree (one can take $f_{-}$ to have no local minima); in particular, $\mathfrak{c}_{M}$ hits only \emph{nilpotent} elements with respect to the cup product. By Theorem \ref{thm:hw-ring-str}, the image of:
\begin{equation*}
  \mathrm{PSS}\circ \mathfrak{c}_{M}:\mathrm{HM}(L;f_{-})\to \mathrm{HF}(L;R_{+\epsilon t}^{\alpha})
\end{equation*}
hits only nilpotent elements with respect to the triangle product. Here $\epsilon>0$ should be taken smaller than the minimal action of a Reeb chord, in order for $\mathrm{HF}(L;R_{+\epsilon t}^{\alpha})$ to be a unital ring.

Finally, by the factorization \eqref{eq:lag-factorization}, it follows that $\mathrm{HF}(L;R_{-\epsilon t}^{\alpha})\to \mathrm{HW}(L)$ also hits only nilpotent elements, and hence cannot hit a non-zero unit, as desired.

If the unit is zero, then $\mathrm{HW}_{0}(L)=0$, and hence $\mathrm{HW}(L)$ is supported only on non-contractible chords; because $\mathrm{HF}(L;R^{\alpha}_{-\epsilon t})$ is supported only on contractible chords, we conclude that:
\begin{lemma}\label{lemma:non-surjective-lemma-lag}
  If $\mathrm{HW}(L)\ne 0$, then $\mathrm{HF}(L;R^{\alpha}_{-\epsilon t})\to \mathrm{HW}(L)$ is not surjective.\hfill$\square$
\end{lemma}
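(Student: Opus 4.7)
The plan is to mirror the closed-string argument from Lemma \ref{lemma:non-surjective-lemma} and split into two cases, depending on whether $\mathrm{HW}(L)$ is entirely supported on contractible chords. First I would note the elementary fact that for $\epsilon>0$ smaller than the minimal length of a Reeb chord of $\alpha$ from $\Lambda$ to $\Lambda$, the time-1 map of $R^{\alpha}_{-\epsilon t}$ moves $\Lambda$ off itself with only very short chords, each of which bounds a small strip close to the identity chord and is therefore contractible in the sense of \S\ref{sec:subc-gener-contr-1}. Hence $\mathrm{HF}(L;R^{\alpha}_{-\epsilon t})=\mathrm{HF}_{0}(L;R^{\alpha}_{-\epsilon t})$.

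Case 1: suppose $\mathrm{HW}_{0}(L)\ne \mathrm{HW}(L)$. Then there exists some class in $\mathrm{HW}(L)$ supported on a non-contractible chord. Since the continuation maps preserve the subcomplex $\mathrm{CF}_{0}$, the image of $\mathrm{HF}(L;R^{\alpha}_{-\epsilon t})=\mathrm{HF}_{0}(L;R^{\alpha}_{-\epsilon t})$ inside $\mathrm{HW}(L)$ lies in the image of $\mathrm{HW}_{0}(L)\to \mathrm{HW}(L)$ and hence misses this class, so surjectivity fails. The step here that requires a moment's care is simply that the decomposition by free homotopy class is respected by continuation (which follows because Floer cylinders/strips induce a homotopy between asymptotic chords within $(W,L)$).

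Case 2: suppose $\mathrm{HW}_{0}(L)=\mathrm{HW}(L)$ and that this common group is nonzero. By Theorem \ref{thm:hw-ring-str} the ring $\mathrm{HW}_{0}(L)$ is unital, so the unit is nonzero. I would then invoke the factorization \eqref{eq:lag-factorization} from \S\ref{sec:pss-morphisms-birth}: the image of the continuation $\mathrm{HF}(L;R^{\alpha}_{-\epsilon t})\to \mathrm{HF}(L;R^{\alpha}_{+\epsilon t})$ factors through $\mathrm{HM}(L;f_{-})\xrightarrow{\mathfrak{c}_{M}}\mathrm{HM}(L;f_{+})\xrightarrow{\mathrm{PSS}}\mathrm{HF}(L;R^{\alpha}_{+\epsilon t})$. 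Choosing $f_{-}$ to have no local minima (which is possible because $\bd L=\Lambda\ne \emptyset$), $\mathfrak{c}_{M}$ lands in strictly positive degree and hence in nilpotent elements of $H^{*}(L)$ under cup product. Since $\mathrm{PSS}$ is a ring isomorphism in low slope, these elements remain nilpotent in $\mathrm{HF}(L;R^{\alpha}_{+\epsilon t})$, and the colimit map into $\mathrm{HW}_{0}(L)$ is a ring homomorphism (by Theorem \ref{thm:hw-ring-str}), so the image in $\mathrm{HW}(L)$ consists only of nilpotent elements. A unit cannot be nilpotent in a nonzero unital ring, so the unit is not hit and surjectivity fails again.

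The main obstacle is really the bookkeeping in Case 2: one has to make sure that the degree argument producing nilpotency for $\mathfrak{c}_{M}$ is valid (this needs $\Lambda\ne \emptyset$ so that $L$ has non-empty Liouville end and $f_{-}$ can be chosen Morse with no minima), and that Theorem \ref{thm:hw-ring-str} — which was stated for weakly exact Lagrangians — applies to our acylindrical $L$; the latter is immediate because acylindrical implies weakly exact. With those two points checked, the two cases together give the lemma.
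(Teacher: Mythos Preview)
Your proposal is correct and follows essentially the same approach as the paper: the two-case split on whether $\mathrm{HW}_{0}(L)=\mathrm{HW}(L)$ mirrors the closed-string Lemma~\ref{lemma:non-surjective-lemma} exactly, and the nilpotency argument via the factorization \eqref{eq:lag-factorization} is precisely what the paper sketches in \S\ref{sec:pss-morphisms-birth} just before stating the lemma. The paper phrases the dichotomy as ``unit zero vs.\ nonzero in $\mathrm{HW}_{0}(L)$'' rather than ``$\mathrm{HW}_{0}(L)=\mathrm{HW}(L)$ vs.\ not,'' but these are equivalent case splits and the content is the same.
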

Compare with Lemma \ref{lemma:non-surjective-lemma} from the closed string case.

\subsection{Naturality transformation associated to a Lagrangian loop}
\label{sec:natur-transf-assoc}

In this section we explain the open-string analogue of the naturality transformations of \S\ref{sec:natur-transf}, and then describe the open string analogue of the twisting trick from \S\ref{sec:the-twist-trick}.

\subsubsection{Naturality transformations}
\label{sec:natur-transf-1}

Let $\varphi_{t}$ be a Hamiltonian system so that $\varphi_{t}(L)$ forms a loop of Lagrangians. We do not require that $\varphi_{1}=\id$, but rather only that $\varphi_{1}(L)=L$.

For appropriate $J'_{t}$ depending on $J_{t}$ and $\varphi_{1}$, $\mathrm{CF}(L;\psi_{t},J_{t})$ and $\mathrm{CF}(L;\varphi_{t}\psi_{t},J_{t})$ are isomorphic as chain complexes; the isomorphism sends $x$ to $\varphi_{1}(x)$. Taking homology induces the naturality transformation $$\mathfrak{n}:\mathrm{HF}(L;\psi_{t})\to \mathrm{HF}(L;\varphi_{t}\psi_{t}).$$

Indeed, if $w$ lies in $\mathscr{M}(L,\psi_{1}(L),J_{t})$ then $\varphi_{1}(w)$ lies in $\mathscr{M}(L,\varphi_{1}(\psi_{1}(L)),J_{t}')$. This correspondence between Floer cylinders for $\psi_{t}$ and for $\varphi_{t}\psi_{t}$ is used to show that the differentials on the two Floer complexes are identified under the naturality transformation.

Similarly to the closed string case, the naturality transformation commutes with continuation maps. This is proved by showing that $w$ solves the continuation equation for $\psi_{s,t}$ implies $\varphi_{1}(w)$ solves the continuation map equation for $\varphi_{t}\psi_{s,t}$, for appropriate choices of complex structure.

\subsubsection{The twisting trick}
\label{sec:twisting-trick}

One first concludes $\mathrm{HF}(\psi_{t})\to \mathrm{HW}(L)$ is surjective, for any admissible system $\psi_{t}$, exactly as in \S\ref{sec:the-twist-trick}.

On the other hand, we have shown in \S\ref{sec:pss-morphisms-birth} that $\mathrm{HF}(\psi_{t})\to \mathrm{HW}(L)$ is not surjective for $\psi_{t}=R_{-\epsilon t}^{\alpha}$, unless $\mathrm{HW}(L)$ is zero. Thus we conclude that $\mathrm{HW}(L)$ is zero, completing the proof of Theorem \ref{theorem:main-relative}.

\subsection{Non-isotopic Lagrangian fillings and the flux group}
\label{sec:lagrangian-flux}

This section is concerned with the Lagrangian version of \S\ref{sec:exot-sympl-flux}. Let $L_{t}$ be a smooth loop of contact-at-infinity Lagrangians based at an acylindrical Lagrangian $L$, and let $\Lambda_{t}$ be the ideal restriction of $L_{t}$. The goal is to prove the existence of a deformation $L_{t}'$ which is an \emph{exact} loop based at $L$ with the same ideal restriction $\Lambda_{t}$. The proof of this assertion uses the acylindrical assumption, and a relative version of flux; see \cite{ono-lagrangian-flux,solomon-lagrangian-flux,shelukhin-tonkonog-vianna,entov-ganor-membrez} for further discussion of the Lagrangian flux.

First of all, pick a contact-at-infinity system $\psi_{t}$ so that the ideal restriction of $\psi_{t}(L)$ agrees with the ideal restriction of $L_{t}$; this can be achieved by the isotopy extension theorem for Legendrians; see \S\ref{sec:param-isot-extens}. Then $\psi_{t}^{-1}(L_{t})$ is a compactly supported deformation of $L$. Let $\mathfrak{L}_{t}=\psi_{t}^{-1}(L_{t})$ with $\mathfrak{L}_{0}=L$.

It is sufficient to prove that there is a compactly supported Hamiltonian isotopy $\rho_{t}$ so that $\rho_{1}(L)=\mathfrak{L}_{1}$; then $\psi_{t}\rho_{t}(L)$ is an exact loop whose ideal restriction agrees with $L_{t}$.

Since $\mathfrak{L}_{t}$ are compactly supported deformations of $L$, there exists a compactly supported parametrization $j_{t}:L\to W$ so that $j_{t}(L)=\mathfrak{L}_{t}$.

Define the flux one-forms on $L$ by: $$F(j_{t};\tau)_{x}:=\int_{0}^{\tau}\omega(j'_{t}(x),\d j_{t,x}(-))\d t;$$
Cartan's magic formula implies this is a closed one-form defined on $L$.

Moreover, if $\Gamma$ is any oriented loop in $L$, the integral of $F(j_{t};\tau)$ over $\Gamma$ equals the integral of $\pm\omega$ over the trace $\Gamma\times [0,\tau]$ under the parametrization $j_{t}$. In particular, $[F(j_{t};1)]=0$ since $j_{1}(L)=L$ and $L$ is acylindrical; here $[-]$ denotes the cohomology class in $\mathrm{H}^{1}_{\mathrm{dR}}(L)$. To see why this flux vanishes, observe that, if $u(s,t)$ parametrizes the trace $\Gamma\times [0,1]$, then $\psi_{t}(u(s,t))$ parametrizes a cylinder with both boundaries on $L$, and therefore has zero symplectic area. As in \S\ref{sec:vanish-flux-exist}, the symplectic area of $\psi_{t}(u(s,t))$ equals the symplectic area of $u(s,t)$ since $\psi_{t}$ is Hamiltonian.

This discussion implies that the following lemma is sufficient to complete the proof of Theorem \ref{theorem:lagrangian-exotic}.
\begin{lemma}\label{lemma:lagrangian-flux-solomon}
  If $j_{t}:L\to W$ is a compactly supported Lagrangian isotopy satisfying $[F(j_{t};1)]=0$, and every compactly supported closed one-form on $L$ extends to one on $W$, then there is a compactly supported Hamiltonian isotopy $\rho_{t}$ so that $\rho_{1}(L)=j_{1}(L)$.
\end{lemma}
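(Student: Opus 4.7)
The plan is to follow the Lagrangian flux framework of Ono \cite{ono-lagrangian-flux} and Solomon \cite{solomon-lagrangian-flux}, reducing the question to the ambient compactly supported symplectic flux, which is well-behaved. First I would extend the Lagrangian isotopy $j_{t}$ to a compactly supported ambient symplectic isotopy $\Psi_{t}$ of $W$ with $\Psi_{0}=\id$ and $\Psi_{t}|_{L}=j_{t}$, using the parametric Lagrangian isotopy extension theorem (via Weinstein's neighbourhood theorem applied in a tube around $\bigcup_{t}j_{t}(L)$, analogous to the construction of \S\ref{sec:serre-fibr-prop}). Writing $V_{t}$ for its spatial generator, define the ambient flux $a:=\bigl[\int_{0}^{1}\iota_{V_{t}}\omega\,\d t\bigr]\in H^{1}_{c}(W;\R)$. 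A Cartan-formula computation, parallel to the one carried out in \S\ref{sec:vanish-flux-exist}, identifies $\iota^{*}a\in H^{1}(L)$ with $[F(j_{t};1)]$, which is zero by hypothesis.

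The heart of the proof is then to construct a compactly supported symplectic isotopy $\chi_{t}$ of $W$ that preserves $L$ setwise and whose flux class equals $-a$. Granted such $\chi_{t}$, the composition $\rho_{t}:=\Psi_{t}\circ\chi_{t}$ has vanishing ambient flux, so by \cite[\S10.2]{mcduffsalamon-alt} is isotopic rel endpoints to a compactly supported Hamiltonian isotopy, and $\rho_{1}(L)=\Psi_{1}(\chi_{1}(L))=\Psi_{1}(L)=j_{1}(L)$ as desired. To produce $\chi_{t}$ it suffices to find a compactly supported closed one-form $\tilde{\gamma}$ on $W$ representing $-a$ that vanishes identically on $L$: then the vector field $V$ with $\iota_{V}\omega=\tilde{\gamma}$ satisfies $V|_{L}\subset(TL)^{\omega}=TL$ by the Lagrangian condition, so its flow $\chi_{t}$ preserves $L$ setwise and has flux class $[\tilde{\gamma}]=-a$ at time one. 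Starting from any representative $\tilde{\gamma}_{0}$ of $-a$, I would apply the extension hypothesis to the compactly supported closed one-form $\iota^{*}\tilde{\gamma}_{0}$ on $L$ to obtain a compactly supported closed extension $\tilde{\eta}$ on $W$; then $\tilde{\gamma}_{0}-\tilde{\eta}$ is compactly supported, closed, and vanishes on $L$. Using the freedom in the choice of $\tilde{\eta}$---distinct extensions differ by compactly supported closed forms vanishing on $L$---together with the long exact sequence of the pair $(W,L)$ in compactly supported cohomology, one arranges $[\tilde{\eta}]=0$ in $H^{1}_{c}(W;\R)$, so that $\tilde{\gamma}:=\tilde{\gamma}_{0}-\tilde{\eta}$ still represents $-a$.

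The hard part will be precisely this last cohomological step: the hypothesis supplies vanishing of $\iota^{*}a$ in $H^{1}(L)$, while the homological book-keeping wants vanishing in $H^{1}_{c}(L)$, which is typically stronger for non-compact $L$. The extension hypothesis is exactly what bridges the gap, by forcing the restriction map on compactly supported closed one-forms to be surjective, so that $\ker(H^{1}_{c}(W)\to H^{1}_{c}(L))$ agrees with the classes representable by a form vanishing on $L$. This is the same cohomological subtlety isolated in \cite{ono-lagrangian-flux,solomon-lagrangian-flux} that the lemma's hypothesis is designed to sidestep.
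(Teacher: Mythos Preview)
Your strategy diverges from the paper's, and the divergence is exactly where the gap appears. You try to kill the \emph{ambient} flux $a\in H^{1}_{c}(W)$ by composing with a symplectic isotopy $\chi_{t}$ that preserves $L$ setwise. But any such $\chi_{t}$ has generator tangent to $L$, so its flux form vanishes pointwise on $L$; hence the flux of $\chi_{t}$ always lies in $\ker\bigl(r_{c}:H^{1}_{c}(W)\to H^{1}_{c}(L)\bigr)$. Your plan therefore requires $r_{c}(a)=0$ in $H^{1}_{c}(L)$, whereas the hypothesis only gives $\iota^{*}a=[F(j_{t};1)]=0$ in $H^{1}(L)$. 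The identification of $\ker r_{c}$ with classes representable by forms vanishing on $L$ is true but irrelevant---it holds with or without the extension hypothesis, and it does not force $r_{c}(a)$ to vanish. Concretely, for $L=\R\subset\C$ one has $H^{1}_{c}(\R)\to H^{1}(\R)$ the zero map with nontrivial source, and the extension hypothesis \emph{fails} there; so the hypothesis is doing real work, just not the work you describe. You have not explained how the extension hypothesis closes the gap between $H^{1}_{c}(L)$ and $H^{1}(L)$, and I do not see that it does in your framework.

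The paper avoids this obstruction by never attempting to kill the ambient flux. Instead it works entirely with the Lagrangian flux in $H^{1}(L)$: one uses the extension hypothesis to build a right inverse $\mathfrak{s}$ of $\iota^{*}$ on the subspace of $H^{1}(L)$ spanned by compactly supported forms, and then finds a two-parameter family $\psi_{s,\tau}\in\mathrm{Symp}_{c}(W)$ whose flux along $s$ equals $-\mathfrak{s}([F(j_{t};\tau)])$ for each $\tau$ (this is \cite[Lemma~6.7]{solomon-lagrangian-flux}). The deformed isotopy $\psi_{1,t}\circ j_{t}$ then has vanishing Lagrangian flux in $H^{1}(L)$ for \emph{every} $\tau$, hence is exact, and one finishes with ambient Hamiltonian isotopy extension. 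The correcting isotopy $\psi_{1,t}$ need not preserve $L$, which is precisely what lets the argument stay in $H^{1}(L)$ rather than $H^{1}_{c}(L)$.
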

\begin{proof}
  One shows that one can make $[F(j_{t};\tau)]=0$ for all $\tau$ without changing $j_{1}(L)$ and $j_{0}(L)=L$ by deforming $j_{t}$ to another compactly supported Lagrangian isotopy. Differentiating the flux with respect to $\tau$ proves the deformation is an exact isotopy; see, e.g., \cite[Corollary 6.4]{solomon-lagrangian-flux}. Then the isotopy extension theorem for exact Lagrangian isotopies applies.

  To make $[F(j_{t};\tau)]=0$ for all $\tau$ one applies \cite[Lemma 6.7]{solomon-lagrangian-flux}. The idea is to define a right inverse $\mathfrak{s}:\mathrm{H}_{\mathrm{dR}}^{1}(L)\to \mathrm{H}_{\mathrm{dR}}^{1}(W)$. One only cares about the subspace spanned by compactly supported forms, and defines $s$ using some choice of basis.

  One finds a compactly supported symplectic isotopy $\psi_{s,t}:[0,1]^{2}\to \mathrm{Symp}_{c}(W)$ so that the symplectic flux along the line $\psi_{s,\tau}$ is $-\mathfrak{s}([F(j_{t};\tau)])$ as $s$ ranges from $0$ to $1$, and $\psi_{0,\tau}=\id$. Then $\psi_{1,t}j_{t}$ is the desired exact isotopy; see \cite{solomon-lagrangian-flux} for the details. This completes the proof.
\end{proof}

\subsubsection{Removing the assumption on the first de Rham cohomology}
\label{sec:remov-assumpt-first}

It seems to be an interesting question whether the surjectivity of the first deRham cohomology $\mathrm{H}^{1}_{\mathrm{dR}}(W)\to \mathrm{H}^{1}_{\mathrm{dR}}(L)$ can be relaxed in the statement of Lemma \ref{lemma:lagrangian-flux-solomon}.

Interestingly enough, not every cohomology class in $\mathrm{H}^{1}_{\mathrm{dR}}(L)$ represented by a compactly supported form can be realized as the flux of a Lagrangian isotopy; see \cite[\S6.2]{shelukhin-tonkonog-vianna} for the case $L=T(r)=\bd D(r_{1})\times \dots\times \bd D(r_{n})\subset \C^{n}$; they determine the exact subset of $\mathrm{H}^{1}_{\mathrm{dR}}(L)$ which can be realized as the flux of a Lagrangian isotopy (this set is called the \emph{shape} of $L$ relative $W$). The characterization of when two tori $T(r),T(r')$ are Hamiltonian isotopic is given in \cite{chekanov1996}, where it is also shown that there are two isotopic monotone tori $T,T'$ in $\R^{2n}$, with the same monotonicity constant, which are not Hamiltonian isotopic. Since any isotopy between $T,T'$ has zero flux (since they have the same monotonicity constant), one sees that Lemma \ref{lemma:lagrangian-flux-solomon} is not true in general;\footnote{Thanks to D.~Rathel-Fournier for pointing this out.} see also \cite[Remark 1.8]{theret-camel}.

\bibliographystyle{alpha}
\bibliography{citations}

\begin{thebibliography}{CCDR19}

\bibitem[AAC23]{alizadeh-atallah-cant}
H.~Alizadeh, M.~S. Atallah, and D.~Cant.
\newblock Lagrangian intersections and the spectral norm in convex-at-infinity
  symplectic manifolds.
\newblock arXiv:2312.14752, 2023.

\bibitem[Abo15]{abouzaid_monograph}
M.~Abouzaid.
\newblock Symplectic cohomology and {V}iterbo's theorem.
\newblock In {\em Free Loop Spaces in Geometry and Topology}, pages 271--486.
  European Mathematical Society, 2015.

\bibitem[AFM15]{albers_fuchs_merry_orderability_weinstein}
P.~Albers, U.~Fuchs, and W.~J. Merry.
\newblock Orderability and the {W}einstein conjecture.
\newblock {\em Compos. Math.}, 151(12):2251–2272, 2015.

\bibitem[AGZ18]{albers-geiges-zehmisch}
P.~Albers, H.~Geiges, and K.~Zehmisch.
\newblock Reeb dynamics inspired by katok's example in finsler geometry.
\newblock {\em Math. Ann.}, 370:1883--1907, 2018.

\bibitem[AK17]{albers-kang-vanishing-RFH}
P.~Albers and J.~Kang.
\newblock Vanishing of {R}abinowitz {F}loer homology on negative line bundles.
\newblock {\em Math. Z.}, 285:493--517, 2017.

\bibitem[AK23]{albers_kang}
P.~Albers and J.~Kang.
\newblock {R}abinowitz {F}loer homology of negative line bundles and {F}loer
  {G}ysin sequence.
\newblock {\em Adv. Math.}, 431:1--130, 2023.

\bibitem[Alu09]{aluffi}
P.~Aluffi.
\newblock {\em Algebra: Chapter 0}, volume 104 of {\em Graduate Studies in
  Mathematics}.
\newblock AMS, 2009.

\bibitem[AM18]{albers_merry_orderability_non_squeezing_RFH}
P.~Albers and W.~J. Merry.
\newblock Orderability, contact non-squeezing, and {R}abinowitz {F}loer
  homology.
\newblock {\em J. Symp. Geom.}, 16(6):1481--1547, 2018.

\bibitem[Arn95]{arnold-symplectic-monodromy}
V.~I. Arnol'd.
\newblock Some remarks on symplectic monodromy of {M}ilnor fibrations.
\newblock In {\em The Floer memorial volume}, volume 133 of {\em Progr. Math.},
  pages 99--103. Birkh{\"a}user, 1995.

\bibitem[AS06]{abbondandolo-schwarz}
A.~Abbondandolo and M.~Schwarz.
\newblock On the {F}loer homology of cotangent bundles.
\newblock {\em Communications on Pure and Applied Mathematics}, 59(2):254--316,
  2006.

\bibitem[AS10]{abouzaid_seidel_open_string_analogue}
M.~Abouzaid and P.~Seidel.
\newblock An open string analogue of {V}iterbo functoriality.
\newblock {\em Geom. Topol.}, 14:627--718, 2010.

\bibitem[AS23]{allout-saglam}
S.~Allout and M.~Sa{\u{g}}lam.
\newblock On contact mapping classes of prequantizations.
\newblock {\em arXiv preprint arXiv:2306.17729}, 2023.

\bibitem[ASZ16]{albers_shelukhin_zapolsky}
P.~Albers, E.~Shelukhin, and F.~Zapolsky.
\newblock Spectral invariants for contactomorphisms of prequantization bundles
  and applications.
\newblock In preparation; {\url{https://youtu.be/DWel-3BOQrl}}, 2016.

\bibitem[BC07]{biran-cornea}
P.~Biran and O.~Cornea.
\newblock Quantum structures for {L}agrangian submanifolds.
\newblock arXiv:0708.4221, 2007.

\bibitem[BC09a]{biran_cornea_CRM}
P.~Biran and O.~Cornea.
\newblock A {L}agrangian quantum homology.
\newblock In {\em New Perspectives and challenges in symplectic field theory},
  pages 1--44. CRM Proc. Lecture Notes, 2009.

\bibitem[BC09b]{biran_cornea_lagrangian_topology}
P.~Biran and O.~Cornea.
\newblock Lagrangian topology and enumerative geometry.
\newblock {\em Geom. Topol.}, 16(2):963--1052, 2009.

\bibitem[BC09c]{biran-cornea-rigidity-uniruling}
P.~Biran and O.~Cornea.
\newblock Rigidity and uniruling for {L}agrangian submanifolds.
\newblock {\em Geom. Topol.}, 13(5):2881--2989, 2009.

\bibitem[BC23]{brocic_cant}
Filip Bro\'ci\'c and Dylan Cant.
\newblock Bordism classes of loops and {F}loer's equation in cotangent bundles.
\newblock arXiv:2305.11783v1, 2023.

\bibitem[Bim23]{bimmermann-2023}
J.~Bimmermann.
\newblock {H}ofer-{Z}ehnder capacity of magnetic disc tangent bundles over
  constant curvature surfaces.
\newblock arXiv:2311.00467, 2023.

\bibitem[Bir01]{biran-lagrangian-barriers}
P.~Biran.
\newblock Lagrangian barriers and symplectic embeddings.
\newblock {\em Geom. funct. anal.}, 11:407--464, 2001.

\bibitem[BKK23]{bae-kang-kim}
J.~Bae, J.~Kang, and S.~Kim.
\newblock {R}abinowitz {F}loer homology for prequantization bundles and {F}loer
  {G}ysin sequence.
\newblock arXiv:2311.17866, 2023.

\bibitem[BR20]{benedetti-ritter}
G.~Benedetti and A.~Ritter.
\newblock Invariance of symplectic cohomology and twisted cotangent bundles
  over surfaces.
\newblock {\em Int. J. Math.}, 31(9), 2020.

\bibitem[BSV22]{borman-sheridan-varolgunes}
M.~S. Borman, N.~Sheridan, and U.~Varolgunes.
\newblock Quantum cohomology as a deformation of symplectic cohomology.
\newblock {\em J. Fixed Point Theory Appl.}, 24(48):1--77, 2022.

\bibitem[Can23]{cant_sh_barcode}
D.~Cant.
\newblock Shelukhin's {H}ofer distance and a symplectic cohomology barcode for
  contactomorphisms.
\newblock arXiv:2309.00529, 2023.

\bibitem[CCDR19]{chantraine_colin_d_rizell}
B.~Chantraine, V.~Colin, and G.~Dimitroglou~Rizell.
\newblock Positive {L}egendrian isotopies and {F}loer theory.
\newblock {\em Ann. Inst. Fourier}, 69(4):1679--1737, 2019.

\bibitem[CDvK14]{chiang-ding-van-koert}
R.~Chiang, F.~Ding, and O.~van Koert.
\newblock Open books for {B}oothby-{W}ang bundles, fibered {D}ehn-twists and
  the mean {E}uler characteristic.
\newblock {\em J. Symp. Geom.}, 12(2):379--426, 2014.

\bibitem[CFH95]{cieliebak_floer_hofer_sh_ii}
K.~Cieliebak, A.~Floer, and H.~Hofer.
\newblock Symplectic homology ii.
\newblock {\em Math. Z.}, 218:103--122, 1995.

\bibitem[CFO10]{cieliebak_frauenfelder_oancea}
K.~Cieliebak, U.~Frauenfelder, and A.~Oancea.
\newblock {R}abinowitz {F}loer homology and symplectic homology.
\newblock {\em Ann. Scient. {\'E}c. Norm. Sup.}, 4(43):957--1015, 2010.

\bibitem[CG22]{casals-gao}
R.~Casals and H.~Gao.
\newblock Infinitely many {L}agrangian fillings.
\newblock {\em Ann. of Math.}, 195:207--249, 2022.

\bibitem[Che96]{chekanov1996}
Y.~V. Chekanov.
\newblock Lagrangian tori in a symplectic vector space and global
  symplectomorphisms.
\newblock {\em Math. Z.}, 223:547--559, 1996.

\bibitem[Che00]{chekanov_2000}
Y.~V. Chekanov.
\newblock Invariant {F}insler metrics on the space of {L}agrangian embeddings.
\newblock {\em Math. Zeit.}, 234:605--619, 2000.

\bibitem[CKS18]{casals-keating-smith}
R.~Casals, A.~Keating, and I.~Smith.
\newblock Symplectomorphisms of exotic disks.
\newblock {\em J. École Poly.}, 5:289--316, 2018.
\newblock With an appendix by S.~Courte.

\bibitem[CN16]{chernov_nemirovski_universal}
V.~Chernov and S.~Nemirovski.
\newblock Universal orderability of {L}egendrian isotopy classes.
\newblock {\em J. Symp. Geom.}, 14(1):149--170, 2016.

\bibitem[CN22]{casals-ng}
R.~Casals and L.~Ng.
\newblock Braid loops with infinite monodromy on the {L}egendrian contact
  {DGA}.
\newblock {\em J. Topol.}, 15, 2022.

\bibitem[Con78]{conley-book}
C.~Conley.
\newblock {\em Isolated invariant sets and the {M}orse theory}, volume~38 of
  {\em Regional Conference Series in Mathematics}.
\newblock AMS, 1978.

\bibitem[Dio12]{diogo-thesis}
L.~Diogo.
\newblock Filtered {Floer} and symplectic homology via {G}romov-{W}itten
  theory.
\newblock Ph.D.~thesis, Stanford University, 2012.

\bibitem[DL19a]{diogo-lisi-2019-jfpta}
L.~Diogo and S.~Lisi.
\newblock {M}orse-{B}ott split symplectic homology.
\newblock {\em J. Fixed Point Theory Appl.}, 21(77):1--77, 2019.

\bibitem[DL19b]{diogo-lisi-2019-jtopol}
L.~Diogo and S.~Lisi.
\newblock Symplectic homology of complements of smooth divisors.
\newblock {\em J. Topol.}, 12:967--1030, 2019.

\bibitem[Don96]{donaldson96}
S.~K. Donaldson.
\newblock Symplectic submanifolds and almost-complex geometry.
\newblock {\em J. Diff. Geom.}, 44:666--705, 1996.

\bibitem[Dro]{dusan_in_preparation}
Du{\v{s}}an Drobnjak.
\newblock In preparation.

\bibitem[DS93]{dostoglou_salamon}
S.~Dostoglou and D.~Salamon.
\newblock Instanton homology and symplectic fixed points.
\newblock {\em London Math. Soc. Lecture Note Ser.}, 192:53--93, 1993.

\bibitem[DU22]{ulja_drobnjak}
D.~Drobnjak and I.~Uljarevi{\'c}.
\newblock Exotic symplectomorphisms and contact circle actions.
\newblock {\em Comm. Contemp. Math.}, 23(03), 2022.

\bibitem[DUZ23]{djordjevic_uljarevic_zhang}
D.~Djordjevi{\'c}, I.~Uljarevi{\'c}, and J.~Zhang.
\newblock Quantitative characterization in contact {H}amiltonian dynamics -
  {I}.
\newblock arXiv:2309.00527, 2023.

\bibitem[EGM18]{entov-ganor-membrez}
M.~Entov, Y.~Ganor, and C.~Membrez.
\newblock Lagrangian isotopies and symplectic function theory.
\newblock {\em Comment. Math. Helv.}, 93:829--882, 2018.

\bibitem[EKP06]{ekp}
Y.~Eliashberg, S.~S. Kim, and L.~Polterovich.
\newblock Geometry of contact transformations and domains: orderability versus
  squeezing.
\newblock {\em Geom. Topol.}, 10(3):1635--1747, 2006.

\bibitem[EP96]{eliashberg-polterovich-2-knots}
Y.~Eliashberg and L.~Polterovich.
\newblock Local lagrangian 2-knots are trivial.
\newblock {\em Ann. of Math.}, 144:61--76, 1996.

\bibitem[EP00]{ep2000}
Y.~Eliashberg and L.~Polterovich.
\newblock Partially ordered groups and geometry of contact transformations.
\newblock {\em Geom. Funct. Anal.}, 10:1448--1476, 2000.

\bibitem[FH94]{floer_hofer_sh_i}
A.~Floer and H.~Hofer.
\newblock Symplectic homology i; open sets in {$\mathbb{C}^{n}$}.
\newblock {\em Math. Z.}, 215:37--88, 1994.

\bibitem[FS05]{frauenfelder-schlenk-dehn-seidel-2005}
U.~Frauenfelder and F.~Schlenk.
\newblock Volume growth in the component of the {D}ehn-{S}eidel twist.
\newblock {\em Geom. funct. anal.}, 15:809--838, 2005.

\bibitem[FS07]{frauenfelder_schlenk}
U.~Frauenfelder and F.~Schlenk.
\newblock Hamiltonian dynamics on convex symplectic manifolds.
\newblock {\em Isr. J. Math.}, 159:1--56, 2007.

\bibitem[Gei08]{Geiges}
H.~Geiges.
\newblock {\em An introduction to contact topology}, volume 109 of {\em
  Cambridge Studies in Advanced Mathematics}.
\newblock Cambridge University Press, Cambridge, 2008.

\bibitem[Gir17a]{cool_gadget}
E.~Giroux.
\newblock Ideal {L}iouville domains; a cool gadget.
\newblock arXiv:1708.08855, 2017.

\bibitem[Gir17b]{giroux-remarks-donaldsons}
E.~Giroux.
\newblock Remarks on {D}onaldson's symplectic submanifolds.
\newblock {\em Pure Appl. Math. Quart.}, 13(3):369--388, 2017.

\bibitem[GM22]{groman-merry-v4}
Y.~Groman and W.~J. Merry.
\newblock The symplectic cohomology of magnetic cotangent bundles.
\newblock arXiv:1809.01085v4, 2022.

\bibitem[GPS20]{ganatra_pardon_shende}
S.~Ganatra, J.~Pardon, and V.~Shende.
\newblock Covariantly functorial wrapped floer theory on liouville sectors.
\newblock {\em Publ. Math. Inst. Hautes {\'E}tudes Sci.}, 131:73--200, 2020.

\bibitem[HS95]{hofer-salamon-95}
H.~Hofer and D.~Salamon.
\newblock Floer homology and {N}ovikov rings.
\newblock In {\em The Floer memorial volume}, volume 133 of {\em Progr. Math.},
  pages 483--524. Birkh{\"a}user, 1995.

\bibitem[KvK16]{kwon-van-koert}
M.~Kwon and O.~van Koert.
\newblock Brieskorn manifolds in contact topology.
\newblock {\em Bull. Lond. Math. Soc.}, 48(11):173--241, 2016.

\bibitem[{Mac}71]{maclean-cat-working-math}
S.~{Mac Lane}.
\newblock {\em Categories for the Working Mathematician}, volume~5 of {\em
  Graduate texts in mathematics}.
\newblock Springer, 2nd edition, 1971.

\bibitem[Mai22]{PA_spectral_diameter}
P-A. Mailhot.
\newblock The spectral diameter of a {L}iouville domain.
\newblock arXiv:2204.04618, 2022.

\bibitem[McD84]{mcduff-flux}
D.~McDuff.
\newblock Symplectic diffeomorphisms and the flux homomorphism.
\newblock {\em Invent. math.}, 77:353--366, 1984.

\bibitem[McD91]{mcduff-contact-boundaries}
D.~McDuff.
\newblock Symplectic manifolds with contact type boundaries.
\newblock {\em Invent. math.}, 103:651--671, 1991.

\bibitem[Mcl20]{mclean20-annals}
M.~Mclean.
\newblock Birational calabi-yau manifolds have the same small quantum products.
\newblock {\em Ann. of Math.}, 191(2):439--579, 2020.

\bibitem[Mil68]{milnor68-singular-points}
J.~Milnor.
\newblock {\em Singular points of complex hypersurfaces}.
\newblock Princeton University Press and the University of Tokyo Press, 1968.

\bibitem[MO70]{milnor-orlik}
J.~Milnor and P.~Orlik.
\newblock Isolated singularities defined by weighted homogeneous polynomials.
\newblock {\em Topology}, 9:385--393, 1970.

\bibitem[MR23]{mclean-ritter-mckay-correspondence}
M.~McLean and A.~F. Ritter.
\newblock {The McKay correspondence for isolated singularities via Floer
  theory}.
\newblock {\em J. Diff. Geom.}, 124(1):113--168, 2023.

\bibitem[MS12]{mcduffsalamon}
D.~McDuff and D.~Salamon.
\newblock {\em $J$-holomorphic curves and Symplectic Topology}.
\newblock American Mathematical Society, Colloquium Publications, 2nd edition,
  2012.

\bibitem[MS17]{mcduffsalamon-alt}
D.~McDuff and D.~Salamon.
\newblock {\em Introduction to Symplectic Topology}.
\newblock Oxford University Press, 3rd edition, 2017.

\bibitem[MU19]{merry_ulja}
W.~J. Merry and I.~Uljarevi\'c.
\newblock Maximum principles in symplectic homology.
\newblock {\em Israel Journal of Mathematics}, 229:39--65, 2019.

\bibitem[Oan06]{oancea-kunneth-formula-sh}
A.~Oancea.
\newblock The {K\"unneth} formula in {F}loer homology for manifolds with
  restricted contact type boundary.
\newblock {\em Math. Ann.}, 334:65--89, 2006.

\bibitem[Oan08]{oancea_fibered_sh}
A.~Oancea.
\newblock Fibered symplectic cohomology and the {L}eray-{S}erre spectral
  sequence.
\newblock {\em J. Symp. Geom.}, 6(3):267--351, 2008.

\bibitem[Ono08]{ono-lagrangian-flux}
K.~Ono.
\newblock A question analogous to the flux conjecture concerning lagrangian
  submanifolds.
\newblock In {\em Proceedings of the 14th G{\"o}kova Geometry-Topology
  Conference}, pages 1--14. International Press, 2008.

\bibitem[Ops13]{opshtein-JSG-2013}
E.~Opshtein.
\newblock Polarizations and symplectic isotopies.
\newblock {\em J. Symp. Geom.}, 11(1):109--133, 2013.

\bibitem[PSS96]{pss}
S.~Piunikhin, D.~Salamon, and M.~Schwarz.
\newblock Symplectic {F}loer-{D}onaldson theory and quantum cohomology.
\newblock In {\em Contact and symplectic geometry ({C}ambridge, 1994)},
  volume~8 of {\em Publ. Newton Inst.}, pages 171--200. Cambridge Univ. Press,
  Cambridge, 1996.

\bibitem[Rit13]{ritter_TQFT}
A.~Ritter.
\newblock Topological quantum field theory structure on symplectic cohomology.
\newblock {\em J. Topol.}, 6:391--489, 2013.

\bibitem[Rit14]{ritter_negative_line_bundles}
A.~F. Ritter.
\newblock Floer theory for negative line bundles via {G}romov-{W}itten
  invariants.
\newblock {\em Adv. Math.}, 262:1035--1106, 2014.

\bibitem[Rit16]{ritter_circle_actions}
A.~Ritter.
\newblock Circle actions, quantum cohomology, and the {F}ukaya category of
  {F}ano toric varieties.
\newblock {\em Geom. Topol.}, 20:1941--2052, 2016.

\bibitem[RS17]{ritter-smith-open-closed}
A.~Ritter and I.~Smith.
\newblock The monotone wrapped {F}ukaya category and the open-closed string
  map.
\newblock {\em Sel. Math. New. Ser.}, 23:533--642, 2017.

\bibitem[R{\v{Z}}23a]{ritter-zivanovic-1}
A.~F. Ritter and F.~{\v{Z}}ivanovi{\'c}.
\newblock Symplectic {$\mathbf{C}^{\ast}$}-manifolds i: Filtration on quantum
  cohomology.
\newblock arXiv:2304.13026, 2023.

\bibitem[R{\v{Z}}23b]{ritter-zivanovic-2}
A.~F. Ritter and F.~{\v{Z}}ivanovi{\'c}.
\newblock Symplectic {$\mathbf{C}^{\ast}$}-manifolds ii: {M}orse-{B}ott-{F}loer
  spectral sequences.
\newblock arXiv:2304.14384, 2023.

\bibitem[Sal99]{salamon99-quantum-products}
D.~Salamon.
\newblock Quantum products for mapping tori and the {A}tiyah-{F}loer
  conjecture.
\newblock In {\em Northern California Symplectic Geometry Seminar}, pages
  143--229. American Mathematical Society, 1999.

\bibitem[Sch95]{schwarz-thesis}
M.~Schwarz.
\newblock Symplectic {F}loer-{D}onaldson theory and quantum cohomology.
\newblock Personal website, 1995.
\newblock PhD Dissertation.

\bibitem[Sei97a]{seidel_thesis}
P.~Seidel.
\newblock Floer homology and the symplectic isotopy problem.
\newblock PhD thesis, Oxford, 1997.

\bibitem[Sei97b]{seidel_representation}
P.~Seidel.
\newblock $\pi_1$ of symplectic automorphism groups and invertibles in quantum
  homology rings.
\newblock {\em Geom. funct. anal.}, 7:1046--1095, 1997.

\bibitem[Sei00]{seidel-graded-lagrangians}
P.~Seidel.
\newblock Graded {L}agrangian submanifolds.
\newblock {\em Bulletin de la S. M. F.}, 128(1):103--149, 2000.

\bibitem[Sei08a]{seidel-biased}
P.~Seidel.
\newblock A biased view of symplectic cohomology.
\newblock In {\em Current developments in mathematics, 2006}, pages 211--253.
  Int. Press, Somerville, MA, 2008.

\bibitem[Sei08b]{seidel_book}
P.~Seidel.
\newblock {\em Fukaya Categories and {Picard-Lefschetz} Theory}.
\newblock Zurich Lectures in Advanced Mathematics. European Mathematical
  Society, 2008.

\bibitem[Sei08c]{seidel-lecture-4d-dehn-twist}
P.~Seidel.
\newblock Lectures on four-dimensional {D}ehn twists.
\newblock In {\em Symplectic 4-Manifolds and Algebraic Surfaces}, volume 1938
  of {\em Lecture Notes in Mathematics}, pages 231--268. Springer-Verlag, 2008.

\bibitem[Sei15]{seidel-eq-pop}
P.~Seidel.
\newblock The equivariant pair-of-pants product in fixed point floer
  cohomology.
\newblock {\em Geom. Funct. Anal.}, 25:942--1007, 2015.

\bibitem[She22a]{shelukhin_viterbo}
E.~Shelukhin.
\newblock Symplectic cohomology and a conjecture of {V}iterbo.
\newblock {\em Geom. Funct. Anal.}, 2022.

\bibitem[She22b]{shelukhin_zoll}
E.~Shelukhin.
\newblock {V}iterbo conjecture for {Z}oll symmetric spaces.
\newblock {\em Invent. math.}, 230:321--373, 2022.

\bibitem[Sol13]{solomon-lagrangian-flux}
J.~P. Solomon.
\newblock The {C}alabi homomorphism, {L}agrangian paths and special
  {L}agrangians.
\newblock {\em Math. Ann.}, 357:1389--1424, 2013.

\bibitem[STV18]{shelukhin-tonkonog-vianna}
E.~Shelukhin, D.~Tonkonog, and R.~Vianna.
\newblock Geometry of symplectic flux and {L}agrangian torus fibrations.
\newblock arXiv:1804.02044, 2018.

\bibitem[Th{\'e}99]{theret-camel}
D.~Th{\'e}ret.
\newblock A {L}agrangian camel.
\newblock {\em Comment. Math. Helv.}, 74:591--614, 1999.

\bibitem[Tra97]{traynor_helix_links}
L.~Traynor.
\newblock Legendrian circular helix links.
\newblock {\em Math. Proc. Camb. Phil. Soc.}, 122:301--314, 1997.

\bibitem[Ulj17]{uljarevic_floer_homology_domains}
I.~Uljarevi{\'c}.
\newblock Floer homology of automorphisms of {L}iouville domains.
\newblock {\em J. Symp. Geom.}, 15(3):861--903, 2017.

\bibitem[Ulj22]{uljarevic_ssh}
I.~Uljarevi\'c.
\newblock Selective symplectic homology with applications to contact
  non-squeezing, 2022.

\bibitem[UZ22]{ulja_zhang}
I.~Uljarevi\'c and J.~Zhang.
\newblock Hamiltonian perturbations in contact {F}loer homology.
\newblock arXiv:2203.12300, 2022.

\bibitem[Ven21]{venkatesh_quantitative_nature}
S.~Venkatesh.
\newblock The quantitative nature of reduced floer theory.
\newblock {\em Adv. Math.}, 383:1--80, 2021.

\bibitem[Vit99]{viterbo_functors_and_computations_1}
C~Viterbo.
\newblock Functors and computations in {F}loer homology with applications, i.
\newblock {\em Geom. Funct. Anal.}, 9(5):985--1033, 1999.

\end{thebibliography}
\end{document}